\renewcommand*\l@section{\@dottedtocline{1}{1.5em}{2.3em}}
\theoremstyle{plain}
\newtheorem{theorem}{Theorem}[section]
\newtheorem{proposition}[theorem]{Proposition}
\newtheorem{lemma}[theorem]{Lemma}
\newtheorem{corollary}[theorem]{Corollary}
\theoremstyle{definition}
\newtheorem{definition}{Definition}[section]
\theoremstyle{example}
\newtheorem{example}{Example}[section]
 \theoremstyle{remark}
 \newtheorem{remark}{Remark}[section]
\numberwithin{equation}{section}
\numberwithin{theorem}{section}
\begin{document}

\begin{center}
{\Large {\textbf {A Discrete Morse Theory for 
Hypergraphs }}}
 \vspace{0.58cm}
 
 Shiquan Ren*, Chong Wang*,  Chengyuan Wu*, Jie Wu*

\bigskip

\bigskip
    
    \parbox{24cc}{{\small

{\textbf{Abstract}.}  
A hypergraph can be obtained from a simplicial complex by deleting some non-maximal simplices.   By \cite{parks},  a hypergraph gives an associated simplicial complex.  By \cite{h1},  the embedded homology of a hypergraph is the homology of the infimum chain complex, or equivalently, the homology of the supremum chain complex.   In this paper,  we generalize the discrete Morse theory for simplicial complexes by R. Forman \cite{ forman1,forman2,forman3}    and  give a discrete Morse theory for hypergraphs. We use the critical simplices of the associated simplicial complex  to construct a sub-chain complex of the infimum chain complex and a sub-chain complex of the supremum chain complex,  then prove that the embedded homology of a hypergraph is isomorphic to the homology of the  constructed chain complexes.  Moreover,  we  define discrete Morse functions on hypergraphs and compute the embedded homology in terms of the critical hyperedges.  As by-products, we derive some Morse inequalities and collapse results for hypergraphs. 
}}
\end{center}

\vspace{1cc}

\footnotetext[1]
{ {\bf 2010 Mathematics Subject Classification.}  	Primary   55U10, 	55U15;  Secondary  55N05,  55N35. 
}

\footnotetext[2]{{\bf Keywords and Phrases.}   hypergraphs,  simplicial complexes,   discrete Morse theory, homology. }

\footnotetext[3] { * first authors. }

\section{Introduction}

Hypergraph is an important model for complex networks, for example, the collaboration network.  An edge in a graph consists of two vertices, while a hyperedge in a hypergraph allows multiple vertices (cf. \cite{berge}).  In topology, a hypergraph can be obtained from a simplicial complex by deleting some non-maximal simplices  (cf. \cite{h1,parks}). 

\smallskip

 Let $V_\mathcal{H}$ be a totally-ordered finite set. Let $2^V$ denote the powerset of $V$.  Let $\emptyset$ denote the empty set.  A {\it hypergraph}  is a pair $(V_\mathcal{H},\mathcal{H})$ where $\mathcal{H}$ is a subset of  $2^V\setminus\{\emptyset\}$ (cf. \cite{berge,parks}).  An element of $V_\mathcal{H}$ is called a {\it vertex}. 
 Let $k\geq 0$.  We call a hyperedge $\sigma\in\mathcal{H}$ consisting of $k+1$ vertices   a  {\it $k$-dimensional hyperedge},  or  a {\it hyperedge} of  {\it dimension $k$}, and denote $\sigma$ as $\sigma^{(k)}$.    For any hyperedges $\sigma,\tau\in\mathcal{H}$, if $\sigma$ is a  proper subset  of $\tau$, then we write   $\sigma<\tau$ or $\tau>\sigma$.  
Throughout this paper, we assume that each vertex in $V_\mathcal{H}$ appears in at least one hyperedge in $\mathcal{H}$.  Hence $V_\mathcal{H}$ is the union $\bigcup_{\sigma\in\mathcal{H}} \sigma$, and we simply denote a hypergraph $(V_\mathcal{H},\mathcal{H})$ as $\mathcal{H}$.  
Given two hypergraphs $\mathcal{H}$ and $\mathcal{H}'$, if each hyperedge of $\mathcal{H}$ is also a hyperedge of $\mathcal{H}'$,  then we write $\mathcal{H}\subseteq\mathcal{H}'$ and say that $\mathcal{H}$ can be {\it embedded in} $\mathcal{H}'$.  

An {\it (abstract) simplicial complex}  is a  hypergraph satisfying the following  condition (cf. \cite[p. 107]{hatcher}, \cite[Section~1.3]{wu1}):
 for any $\sigma\in \mathcal{H}$ and any non-empty subset $\tau\subseteq \sigma$,     $\tau$ must be a hyperedge in  $\mathcal{H}$. 
 A hyperedge  of a simplicial complex is called a {\it simplex}.  

\smallskip

Let $\mathcal{H}$ be a hypergraph.    Let $R$  be a commutative ring with multiplicative unity.  Let $n$ be a nonnegative integer. Let $R(\mathcal{H})_n$  be the collection of all the formal linear combinations of  the  $n$-dimensional hyperedges of $\mathcal{H}$, with coefficients in $R$.  In particular, if $\mathcal{H}$ is a simplicial complex,  then we also denote $R(\mathcal{H})_n$ as $C_n(\mathcal{H};R)$.  

We consider a 
 simplicial complex $\mathcal{K}$ such that $\mathcal{H}\subseteq\mathcal{K}$. 
 Let $\partial^{\mathcal{K}}_*$ be the boundary maps of $\mathcal{K}$.  By \cite{h1},  as sub-chain complexes of $\{C_n(\mathcal{K};R),\partial_n^{\mathcal{K}}\}_{n\geq 0}$,  the {\it infimum chain complex} of $\mathcal{H}$ is defined by
   \begin{eqnarray}\label{eq-0.01}
   \text{Inf}_n(R(\mathcal{H})_*)=R(\mathcal{H})_n\cap (\partial^{\mathcal{K}}_n)^{-1}R(\mathcal{H})_{n-1}, \text{\ \ \ } n\geq 0;  
   \end{eqnarray}
   and the {\it supremum chain complex} of $\mathcal{H}$ is defined by
   \begin{eqnarray}\label{eq-0.02}
   \text{Sup}_n(R(\mathcal{H})_*)=R(\mathcal{H})_n+ \partial^{\mathcal{K}}_{n+1}R(\mathcal{H})_{n+1}, \text{\ \ \ } n\geq 0. 
   \end{eqnarray}
Here $(\partial^{\mathcal{K}}_n)^{-1}$ denotes the pre-image.   It is proved in \cite{h1} that both (\ref{eq-0.01}) and (\ref{eq-0.02}) do not depend on the choice of $\mathcal{K}$.  Hence we may choose $\mathcal{K}$ as the smallest simplicial complex  
   such that $\mathcal{H}\subseteq\mathcal{K}$, which is called the {\it associated simplicial complex} of $\mathcal{H}$ (cf. \cite{parks})  and is denoted as $\Delta\mathcal{H}$ in this paper.

By \cite{h1}, the homology of  (\ref{eq-0.01}) and (\ref{eq-0.02}) are isomorphic.    The {\it embedded homology} of $\mathcal{H}$, denoted as  $H_*(\mathcal{H};R)$, is defined as the homology of (\ref{eq-0.01}) and  (\ref{eq-0.02}).  
 In particular, if $\mathcal{H}$ is a simplicial complex, then both (\ref{eq-0.01}) and  (\ref{eq-0.02})  are $C_n(\mathcal{H};R)$, and $H_*(\mathcal{H};R)$ is the usual homology of the simplicial complex.

\smallskip

In   \cite{ forman1,forman2,forman3}, R. Forman has developed a discrete Morse theory for simplicial complexes (and general cell complexes).   He defined discrete Morse functions $f$ on simplicial complexes by assigning a real number $f(\sigma)$ to each simplex $\sigma$ such that  for any $n\geq 0$ and any simplex $\alpha^{(n)}$, there exist at most one $\beta^{(n+1)}>\alpha^{(n)}$ with $f(\beta)\leq f(\alpha)$ and  at most one $\gamma^{(n-1)}<\alpha^{(n)}$ with $f(\gamma)\geq f(\alpha)$.   Then he defined discrete gradient vector fields $\text{grad } f$ by assigning arrows from $\alpha$ to $\beta$ whenever $\alpha^{(n)}<\beta^{(n+1)}$ 
and $f(\alpha)\geq f(\beta)$.   The critical simplices are essentially the simplices which are neither the heads nor the tails of any arrows in $\text{grad } f$. 
R. Forman constructed a chain complex consisting of the formal linear combinations of the critical simplices.  He proved that the homology of the new chain complex is isomorphic to the homology of the original simplicial complex.  


\smallskip

In this paper,  we generalize the discrete Morse theory for simplicial complexes in  \cite{ forman1,forman2,forman3} and give a discrete Morse theory for hypergraphs. 
Let  $\mathcal{K}$ be a simplicial complex such that $\mathcal{H}\subseteq\mathcal{K}$.  
Let $\overline f_{\mathcal{K}}$ be a discrete Morse function on $\mathcal{K}$.  We denote the collection of all the critical simplices of $\overline f_\mathcal{K}$ as  $M(\overline f_{\mathcal{K}},\mathcal{K})$.   The first main result   is the next theorem.

\begin{theorem}[Main Result I]
\label{th-0.05}
Let $\mathcal{H}$ be a hypergraph and $n\geq 0$. {\color{black} Suppose both $\text{Inf}_*(\mathcal{H})$ and $\text{Sup}_*(\mathcal{H})$ are $\text{grad}~f$-invariant. } Then we have 
the following isomorphisms of homology groups
\begin{eqnarray*}
H_n(\mathcal{H};R)&\cong& H_n(\{R(M(\overline f_\mathcal{K}, \mathcal{K}))_k\cap \text{Inf}_k(R(\mathcal{H})_*),\tilde\partial^{\mathcal{K}}_k\}_{k\geq 0})\\
&\cong&  H_n(\{R(M(\overline f_\mathcal{K}, \mathcal{K}))_k\cap \text{Sup}_k(R(\mathcal{H})_*),\tilde\partial^{\mathcal{K}}_k\}_{k\geq 0}). 
\end{eqnarray*}
Here 
$
\tilde \partial^{\mathcal{K}}_k
$
 is (the restriction of)  
 the boundary map from $R(M(\overline f_\mathcal{K}, \mathcal{K}))_k$ to $R(M(\overline f_\mathcal{K}, \mathcal{K}))_{k-1}$ whose  explicit formula  is given in \cite[Theorem~8.10]{forman1}.   
\end{theorem}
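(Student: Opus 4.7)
The plan is to exploit Forman's Morse-theoretic chain equivalence on the ambient simplicial complex $\mathcal{K}$ and descend it to the infimum and supremum chain complexes of $\mathcal{H}$. Forman's theorem \cite{forman1} provides explicit chain maps $\Pi\colon C_*(\mathcal{K};R)\to R(M(\overline{f}_\mathcal{K},\mathcal{K}))_*$ and $I\colon R(M(\overline{f}_\mathcal{K},\mathcal{K}))_*\to C_*(\mathcal{K};R)$, built from the discrete gradient flow of $\overline{f}_\mathcal{K}$, that realise a chain equivalence $C_*(\mathcal{K};R)\simeq(R(M(\overline{f}_\mathcal{K},\mathcal{K}))_*,\tilde{\partial}^\mathcal{K})$. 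The differential $\tilde{\partial}^\mathcal{K}$ is characterised via gradient paths by the explicit formula of \cite[Theorem~8.10]{forman1}.

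The first step is to verify that $R(M(\overline{f}_\mathcal{K},\mathcal{K}))_k\cap\mathrm{Inf}_k(R(\mathcal{H})_*)$ is stable under $\tilde{\partial}^\mathcal{K}$, so that it actually forms a sub-chain complex of the Morse complex; the analogous statement must be checked for $\mathrm{Sup}$. Using the gradient-path formula, I would argue that for a chain $c$ lying simultaneously in $R(M)_k$ and $\mathrm{Inf}_k$, each critical simplex appearing in $\tilde{\partial}^\mathcal{K}c$ is reached by gradient paths whose overall contribution again satisfies the boundary condition defining $\mathrm{Inf}_{k-1}$, and symmetrically for $\mathrm{Sup}$.

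The main step is then to prove that the inclusion $R(M)_*\cap\mathrm{Inf}_*\hookrightarrow\mathrm{Inf}_*$ induces an isomorphism on homology (and similarly for $\mathrm{Sup}$). My approach is inductive: process the pairs of the discrete gradient vector field one at a time, in an order compatible with $\overline{f}_\mathcal{K}$, and at each stage perform the algebraic collapse that cancels the pair $(\alpha^{(k)},\beta^{(k+1)})$. One verifies that such a collapse preserves the homology of the corresponding restriction of $\mathrm{Inf}_*$, so that after finitely many steps only critical simplices (intersected with $\mathrm{Inf}_*$) remain; this produces the first desired isomorphism. The supremum case follows by the dual construction in which (\ref{eq-0.01}) is replaced by (\ref{eq-0.02}), and the two resulting isomorphisms are consistent with the identification $H_*(\mathrm{Inf}_*)\cong H_*(\mathrm{Sup}_*)$ of \cite{h1}.

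The main obstacle is the compatibility check at each collapse step: the discrete gradient vector field may pair a simplex in $\mathcal{H}$ with one outside $\mathcal{H}$, so $R(\mathcal{H})_*$ is not preserved by the Morse flow in a naive sense. Controlling this requires a careful bookkeeping of how gradient paths traverse hyperedges and non-hyperedges: the contributions of non-hypergraph simplices must cancel against each other when the full Morse boundary is assembled, so that the net result stays inside $\mathrm{Inf}$ (respectively $\mathrm{Sup}$). It is this combinatorial cancellation, together with the structure of the Morse flow, that ultimately allows Forman's equivalence to descend to the subcomplexes defined by $\mathcal{H}$.
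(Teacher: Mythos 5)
Your proposal has a genuine gap at its main step, and the step as formulated would not go through. First, the comparison you want to make is not along an inclusion of chain complexes: $R(M(\overline f_\mathcal{K},\mathcal{K}))_k\cap \text{Inf}_k(R(\mathcal{H})_*)$ carries the Morse differential $\tilde\partial^{\mathcal{K}}_k$, while $\text{Inf}_k(R(\mathcal{H})_*)$ carries $\partial^{\mathcal{K}}_k$, so ``the inclusion induces an isomorphism on homology'' is not a well-posed assertion; a chain-level comparison map has to be manufactured from the flow. This is also where your route departs from the paper's: the paper never cancels gradient pairs one at a time. It works globally with the stabilized flow $\overline\Phi^\infty$, restricting the equivalence between $C_*(\mathcal{K};R)$ and the $\overline\Phi$-invariant chains to the sub-chain complexes $\text{Inf}$ and $\text{Sup}$ (Lemma~\ref{le-a}, Theorem~\ref{main-1}), and then identifying $R(M(\overline f_\mathcal{K},\mathcal{K}))_k\cap\text{Inf}_k(R(\mathcal{H})_*)$ with $\overline\Phi^\infty\text{Inf}_k(R(\mathcal{H})_*)$ as graded modules via Forman's isomorphism $\overline\Phi^\infty\colon R(M(\overline f_\mathcal{K},\mathcal{K}))_*\to C_*^{\overline\Phi}$ (Lemmas~\ref{le-5.1} and~\ref{pr-5.1}), the differential $\tilde\partial^{\mathcal{K}}$ arising by transport (Theorem~\ref{th-6.5}). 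In your induction the intermediate objects (``the corresponding restriction of $\text{Inf}_*$'' after a single algebraic cancellation) are never defined, and after one Gaussian-elimination step the corrected differential has no reason to preserve an intersection with $R(\mathcal{H})_*$, so the induction cannot even be set up without resolving exactly the compatibility problem you postpone.

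Second, the cancellation you defer is not a bookkeeping fact: it is unavailable for a general $\overline f_\mathcal{K}$. Take $\mathcal{H}=\{\{v_0\},\{v_1\},\{v_0,v_1,v_2\}\}$, $\mathcal{K}=\Delta\mathcal{H}$ the full $2$-simplex, and the discrete Morse function with $\overline f(\{v_2\})=0$, $\overline f(\{v_1\})=1$, $\overline f(\{v_0,v_2\})=2$, $\overline f(\{v_0\})=3$, $\overline f(\{v_0,v_1\})=4$, $\overline f(\{v_0,v_1,v_2\})=5$, $\overline f(\{v_1,v_2\})=6$; its gradient pairs $\{v_0\}$ with $\{v_0,v_2\}$ and $\{v_1,v_2\}$ with $\{v_0,v_1,v_2\}$, so the critical simplices are $\{v_1\},\{v_2\},\{v_0,v_1\}$. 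Here $\text{Inf}_0(R(\mathcal{H})_*)=R(\{v_0\})\oplus R(\{v_1\})$ and $H_0(\mathcal{H};R)\cong R^{\oplus 2}$, whereas $R(M(\overline f_\mathcal{K},\mathcal{K}))_0\cap\text{Inf}_0(R(\mathcal{H})_*)=R(\{v_1\})$ and $R(M(\overline f_\mathcal{K},\mathcal{K}))_1\cap\text{Inf}_1(R(\mathcal{H})_*)=0$ (the same ranks occur with $\text{Sup}$ in place of $\text{Inf}$), so the intersection complex has degree-zero homology of rank one whatever differential it carries. Hence no analysis of gradient paths can force the non-hyperedge contributions to cancel in this generality; a gradient pair joining a hyperedge to a simplex outside $\mathcal{H}$ genuinely destroys the intersection complex. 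Some hypothesis tying $\text{grad }\overline f_\mathcal{K}$ to $\mathcal{H}$ --- for instance that the gradient vanishes on $\mathcal{K}\setminus\mathcal{H}$, or the equality $M(\overline f_\mathcal{K},\mathcal{K})\cap\mathcal{H}=M(f,\mathcal{H})$, as in Lemma~\ref{co-4.aa}, Corollary~\ref{co-6.x} and Theorem~\ref{co-0.08} --- is what your induction would have to invoke at every cancellation step, and your sketch never secures it.
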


We also  generalize discrete  Morse functions and discrete gradient vector fields on simplicial complexes and define them on hypergraphs.  We define discrete Morse functions $g$ on $\mathcal{H}$ by assigning a real number $g(\sigma)$ to each hyperedge $\sigma\in\mathcal{H}$ such that  for any $n\geq 0$ and any hyperedge $\alpha^{(n)}$, there exist at most one hyperedge $\beta^{(n+1)}>\alpha^{(n)}$ with $g(\beta)\leq g(\alpha)$ and  at most one hyperedge $\gamma^{(n-1)}<\alpha^{(n)}$ with $g(\gamma)\geq g(\alpha)$.   Then we define discrete gradient vector fields $\text{grad } g$ on $\mathcal{H}$ by assigning arrows from a hyperedge $\alpha$ to a hyperedge $\beta$ whenever $\alpha^{(n)}<\beta^{(n+1)}$ for some $n\geq 0$ and $g(\alpha)\geq g(\beta)$.   The critical hyperedges are essentially the hyperedges which are neither  the heads nor the tails  of any arrows in $\text{grad } g$. 
 We denote the collection of all the critical hyperedges of $g$ as  $M(g,\mathcal{H})$. 
The second main result is the next theorem. 
    
    \begin{theorem}[Main Result II]
    \label{co-0.08}
    Let $\mathcal{H}$ be  a hypergraph. Suppose for any $k\geq 1$ and any hyperedges $\beta^{(k+1)}>\alpha^{(k)}>\gamma^{(k-1)}$ of $\mathcal{H}$, there exists $\hat   \alpha^{(k)}\in\mathcal{H}$, $\hat   \alpha\neq \alpha$,  such that $\beta>\hat  \alpha>\gamma$.  Let $g$ be a discrete Morse function on  $\mathcal{H}$. Then there exists a discrete Morse function $\overline f_\mathcal{K}$ on $\mathcal{K}$ such that $\text{grad } (\overline f_\mathcal{K})$ is $\text{grad } g$ on $\mathcal{H}$, and $\text{grad } (\overline f_\mathcal{K})$ is vanishing on $\mathcal{K}\setminus\mathcal{H}$.  Moreover,  if {\color{black}  both $\text{Inf}_*(\mathcal{H})$ and $\text{Sup}_*(\mathcal{H})$ are $\text{grad}~g$-invariant, } 
\begin{eqnarray}\label{eq-0.0a}
H_n(\mathcal{H};R)\cong H_n(\{R(M(g,\mathcal{H}))_k\cap (\partial^{\mathcal{K}}_k)^{-1}(R(\mathcal{H})_{k-1}),\tilde\partial^{\mathcal{K}}_k\}_{k\geq 0}). 
\end{eqnarray}
 Here 
$
\tilde \partial^{\mathcal{K}}_k
$
 is (the restriction of) the boundary map from $R(M(\overline f_\mathcal{K}, \mathcal{K}))_k$ to $R(M(\overline f_\mathcal{K}, \mathcal{K}))_{k-1}$ whose  explicit formula  is given in \cite[Theorem~8.10]{forman1}.  Furthermore,   the chain complex 
  \begin{eqnarray}\label{eq-0.i}
 \{R(M(g,\mathcal{H}))_k\cap (\partial^{\mathcal{K}}_k)^{-1}(R(\mathcal{H})_{k-1}),\tilde\partial^{\mathcal{K}}_k\}_{k\geq 0}
 \end{eqnarray}
  is determined by $\mathcal{H}$ and $g$, and does not depend on the choice of $\mathcal{K}$ and the choice of $\overline f_\mathcal{K}$.    
        \end{theorem}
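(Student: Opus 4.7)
The plan is to reduce Main Result~II to Theorem~\ref{th-0.05} by first constructing a discrete Morse function $\overline f_\mathcal{K}$ on $\mathcal{K}$ whose gradient agrees with $\text{grad } g$ on $\mathcal{H}$ and vanishes on $\mathcal{K}\setminus\mathcal{H}$, and then identifying the resulting sub-chain complex with the one stated in~(\ref{eq-0.i}).

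For the construction, I would first check that $\text{grad } g$ is acyclic as a matching on the Hasse diagram of $\mathcal{H}$: along any putative V-cycle $\alpha_0\to\beta_0\to\alpha_1\to\beta_1\to\cdots\to\alpha_r=\alpha_0$, the matched pairs give $g(\alpha_i)\geq g(\beta_i)$, and the Morse uniqueness at each $\alpha_{i+1}$ (applied to the distinct coface $\beta_i\neq\beta_{i+1}$) forces the strict inequality $g(\beta_i)>g(\alpha_{i+1})$, contradicting $\alpha_r=\alpha_0$. Since every matched simplex already lies in $\mathcal{H}$, regarding $\text{grad } g$ as a matching on the Hasse diagram of $\mathcal{K}$ cannot create new cycles and so remains acyclic; by Forman's realization theorem for acyclic matchings (\cite{forman1}), there exists a discrete Morse function $\overline f_\mathcal{K}$ on $\mathcal{K}$ whose gradient is exactly this matching. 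In particular $\text{grad}(\overline f_\mathcal{K})|_\mathcal{H}=\text{grad } g$ and $\text{grad}(\overline f_\mathcal{K})$ vanishes on $\mathcal{K}\setminus\mathcal{H}$. The hypothesis on $\hat\alpha$ enters here to reconcile the local Morse condition on $\mathcal{H}$ (which only sees adjacent hyperedges) with the local Morse condition on $\mathcal{K}$ at the boundary between $\mathcal{H}$ and $\mathcal{K}\setminus\mathcal{H}$: for any codimension-two interval $\gamma<\alpha<\beta$ in $\mathcal{H}$ the hypothesis forces both $k$-simplices of $\mathcal{K}$ between $\gamma$ and $\beta$ to be hyperedges, so that no ``asymmetric'' chain in $\mathcal{H}$ obstructs the extension.

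Next I would apply Theorem~\ref{th-0.05} to $\overline f_\mathcal{K}$. Because $\text{grad}(\overline f_\mathcal{K})$ vanishes on $\mathcal{K}\setminus\mathcal{H}$, every simplex of $\mathcal{K}\setminus\mathcal{H}$ is critical for $\overline f_\mathcal{K}$, and a hyperedge of $\mathcal{H}$ is critical for $\overline f_\mathcal{K}$ iff it is critical for $g$; hence $M(\overline f_\mathcal{K},\mathcal{K})\cap\mathcal{H}_k=M(g,\mathcal{H})_k$. Combined with $\text{Inf}_k(R(\mathcal{H})_*)=R(\mathcal{H})_k\cap(\partial_k^\mathcal{K})^{-1}R(\mathcal{H})_{k-1}$, this gives the identity
\[
R(M(\overline f_\mathcal{K},\mathcal{K}))_k\cap\text{Inf}_k(R(\mathcal{H})_*)=R(M(g,\mathcal{H}))_k\cap(\partial_k^\mathcal{K})^{-1}R(\mathcal{H})_{k-1},
\]
and the isomorphism~(\ref{eq-0.0a}) then follows from Theorem~\ref{th-0.05}. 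For independence from the choices of $\mathcal{K}$ and $\overline f_\mathcal{K}$, the underlying graded module is intrinsic to $\mathcal{H}$ and $g$ because $\partial_k^\mathcal{K}$ acts on $R(\mathcal{H})_k$ as the simplicial boundary of hyperedges; and any V-path contributing a nonzero coefficient in $\tilde\partial_k^\mathcal{K}(\sigma)$ to some $\sigma'\in M(g,\mathcal{H})$ must pass only through matched pairs, all of which lie in $\mathcal{H}$, and must end at $\sigma'\in\mathcal{H}$, so the entire V-path lies in $\mathcal{H}$ and its contribution depends only on $\mathcal{H}$ and $g$.

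The main obstacle will be the first step: showing carefully that the hypothesis on $\hat\alpha$ is precisely the condition needed for $\text{grad } g$ to extend to a Morse function on $\mathcal{K}$ whose restricted boundary agrees with Forman's explicit formula in \cite[Theorem~8.10]{forman1} on the sub-chain complex of~(\ref{eq-0.i}), and is in addition independent of the choice of ambient simplicial complex.
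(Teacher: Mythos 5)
Your overall route---realize the extended-by-zero gradient as a discrete Morse function on $\mathcal{K}$, identify the critical simplices of $\overline f_\mathcal{K}$ lying in $\mathcal{H}$ with $M(g,\mathcal{H})$, feed this into Theorem~\ref{th-0.05}, and argue independence of the choices via the boundary and the $V$-paths being intrinsic to $\mathcal{H}$ and $\text{grad } g$---is essentially the paper's route (the paper passes through $\Delta\mathcal{H}$ and Forman's extension lemma rather than invoking the realization theorem directly on $\mathcal{K}$, an immaterial difference). But there is a genuine gap in your first step. Before you may speak of ``the matching $\text{grad } g$'' and apply Forman's realization theorem, you must show that $\text{grad } g$ is a matching at all, i.e.\ that no hyperedge occurs in two pairs (equivalently $V\circ V=0$, ``proper'' in the paper's terminology). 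Your acyclicity argument is correct but does not address this, and for discrete Morse functions on hypergraphs properness can genuinely fail: in Example~\ref{ex-3.1} (built on Example~\ref{ex-2.a}) one has $V(\{v_0\})=\{v_0,v_1\}$ and $V(\{v_0,v_1\})=-\{v_0,v_1,v_2\}$, so $\{v_0,v_1\}$ is simultaneously head and tail of arrows, $V\circ V\neq 0$, and indeed no discrete Morse function on any ambient simplicial complex restricts to give this gradient (Example~\ref{ex-2.a}, item (ii)). Without properness the function $\overline f_\mathcal{K}$ you claim simply need not exist.

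This is exactly where the $\hat\alpha$ hypothesis (condition (C)) is used in the paper: by the analogue of Forman's Lemma~2.5 (Example~\ref{le-2.a}), condition (C) forbids a hyperedge from satisfying both (A) and (B), hence makes $\text{grad } g$ proper, and only then can one extend it by zero and realize it (Lemma~\ref{pr-9.1}). Your stated role for the hypothesis---reconciling the Morse conditions ``at the boundary between $\mathcal{H}$ and $\mathcal{K}\setminus\mathcal{H}$''---misidentifies the obstruction, which lives entirely inside $\mathcal{H}$; you yourself flag this step as the unresolved ``main obstacle''. Once properness is supplied, the remainder of your argument (the identification $M(\overline f_\mathcal{K},\mathcal{K})\cap\mathcal{H}=M(g,\mathcal{H})$, the reduction to Theorem~\ref{th-0.05}, and the $V$-path argument for independence of $\mathcal{K}$ and $\overline f_\mathcal{K}$) goes through and matches the paper's Proposition~\ref{co-4.bb}, Remark~\ref{re-5.1} and the proof given in Section~\ref{s7}.
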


    \smallskip
    
    The paper is organized as follows. 
In Section~\ref{s2}, we give some preliminary knowledge of the associated simplicial complexes and the  embedded homology of hypergraphs.   In  Section~\ref{s3}$-$\ref{s6},  we develop a discrete Morse theory for hypergraphs by studying the discrete Morse functions, discrete gradient vector fields,   discrete gradient flows,  and the critical simplices and critical hyperedges subsequently. Then  in Section~\ref{s7},  we prove Theorem~\ref{th-0.05} and Theorem~\ref{co-0.08}.

  We give some by-products in Section~\ref{s8}$-$\ref{s10}. 
 We prove some Morse inequalities for hypergraphs in Theorem~\ref{th-77.1} and Theorem~\ref{th-77.2}.  We  prove that the collapses of hypergraphs preserve  the embedded homology,  in Theorem~\ref{pr-8.4}.  We  study the collapse of level hypergraphs in Corollary~\ref{co-8.8}.

  We point out that an alternative approach to construct a discrete Morse theory for hypergraphs   is to apply the algebraic Morse theory in \cite{mams,alg2,algm} to the chain complexes (\ref{eq-0.01}) and (\ref{eq-0.02}).  
Nevertheless,  our approach has the advantage that we can observe the geometric meaning of the critical hyperedges, hence we can   apply our discrete Morse theory directly to concrete hypergraphs. 

  \smallskip
  
 Finally, we discuss some potential research which is not included in the remaining sections of this paper. 
(i).  We intend to study the potential connections between the discrete Morse theory of graphs \cite{ayala2,ayala1} and our discrete Morse theory of hypergraphs.  
(ii).  Inspired by \cite{cy}, we intend to equip a weight on each hyperedge and consider weighted discrete Morse theory for weighted hypergraphs.  
  

    \section{Associated Simplicial Complexes  and Embedded Homology }\label{s2}
    
  Let $\mathcal{H}$ be a hypergraph.  In this section, we introduce the associated simplicial complex $\Delta\mathcal{H}$, the lower-associated simplicial complex $\delta\mathcal{H}$, and the embedded homology $H_*(\mathcal{H})$. 
  
\smallskip
 
 {\bf 1.  Associated Simplicial Complexes}.  Firstly,  we introduce $\Delta\mathcal{H}$ and $\delta\mathcal{H}$.  
 For a single hyperedge $\sigma=\{v_0,v_1,\ldots,v_n\}$ of $\mathcal{H}$, the {\it associated simplicial complex} $\Delta\sigma$ of $\sigma$ is   the collection of all the nonempty subsets of $\sigma$
\begin{eqnarray}\label{eq-md1}
\Delta\sigma=\{\{v_{i_0},v_{i_1},\ldots,v_{i_k}\}\mid 0\leq i_0<i_1<\cdots< i_k\leq n,  0\leq k\leq n\}. 
\end{eqnarray}
    The {\it associated simplicial complex} $\Delta \mathcal{H}$ of $\mathcal{H}$ is the smallest simplicial complex that $\mathcal{H}$ can be embedded in (cf. \cite{parks}). Explicitly,  $\Delta\mathcal{H}$ has its set of simplices as the union of the $\Delta\sigma$'s for all $\sigma\in\mathcal{H}$
\begin{eqnarray}\label{e10}
\Delta \mathcal{H}=\{\tau\in\Delta\sigma\mid \sigma\in \mathcal{H}
\}.
\end{eqnarray} 
We use 
\begin{eqnarray*}
\partial_*: C_*(\Delta\mathcal{H};R)\longrightarrow C_{*-1}(\Delta\mathcal{H};R), \text{\ \ } *=0,1,2,\ldots,
\end{eqnarray*}
 to denote the boundary maps of $\Delta\mathcal{H}$.  For simplicity, we sometimes denote $\partial_*$ as $\partial$ and omit the dimensions.  
Let the {\it lower-associated simplicial complex} $\delta \mathcal{H}$ be the largest simplicial complex that can be embedded in $\mathcal{H}$. Then the set of simplices of $\delta\mathcal{H}$ consists of the hyperedges  $\sigma\in\mathcal{H}$ whose associated simplicial complexes $\Delta\sigma$ are subsets of $\mathcal{H}$ 
\begin{eqnarray}\label{e11}
\delta\mathcal{H}&=&\{\sigma\in\mathcal{H}\mid \Delta\sigma\subseteq \mathcal{H}\}\nonumber\\
&=&\{\tau\in\Delta\sigma\mid \Delta\sigma\subseteq \mathcal{H}\}. 
\end{eqnarray}
We notice that as hypergraphs,  
\begin{eqnarray}\label{e1}
\delta \mathcal{H}\subseteq \mathcal{H}\subseteq\Delta\mathcal{H}.
\end{eqnarray}
By (\ref{e10}) and (\ref{e11}), it can be verified that in (\ref{e1}), one of the equalities holds iff. both of the equalities hold iff. $\mathcal{H}$ is a simplicial complex.  
 The boundary map of $\delta\mathcal{H}$  is the restriction of $\partial_*$ to $\delta\mathcal{H}$
\begin{eqnarray*}
\partial_*\mid_{C_*(\delta\mathcal{H};R)}: C_*(\delta\mathcal{H};R)\longrightarrow C_{*-1}(\delta\mathcal{H};R), \text{\ \ }*=0,1,2,\ldots. 
\end{eqnarray*}

\smallskip

{\bf 2. Embedded Homology}.  Secondly,  we introduce the embedded homology $H_*(\mathcal{H})$.  Let $D_*$ be a graded sub-$R$-module of $C_*(\Delta\mathcal{H};R)$.  By \cite[Section~2]{h1}, the infimum chain complex is  defined as
\begin{eqnarray*}
\text{Inf}_n(D_*)= D_n\cap\partial_n^{-1}(D_{n-1}),  \text{\ \ \ } n\geq 0,
\end{eqnarray*}
which   is  the largest sub-chain complex of $C_*(\Delta\mathcal{H};R)$ contained in $D_*$ as graded $R$-modules;   and the supremum chain complex is defined as
\begin{eqnarray*}
\text{Sup}_n(D_*)= D_n+\partial_{n+1}(D_{n+1}), \text{\ \ \ }n\geq 0,
\end{eqnarray*}
which is the smallest sub-chain complex of $C_*(\Delta\mathcal{H};R)$ containing $D_*$ as graded $R$-modules.  Moreover, 
 as chain complexes,    
\begin{eqnarray}
\{\text{Inf}_n(D_*), \partial_n\mid_{\text{Inf}_n(D_*)}\}_{n\geq 0}&\subseteq& \{\text{Sup}_n(D_*), \partial_n\mid_{\text{Sup}_n(D_*)}\}_{n\geq 0} \nonumber\\
&\subseteq& \{C_n(\Delta\mathcal{H};R), \partial_n\}_{n\geq 0}. 
\label{eq-2.99}
\end{eqnarray}
By \cite[Section~2]{h1}, the canonical inclusion 
\begin{eqnarray*}
\iota: \text{Inf}_n(D_*)\longrightarrow \text{Sup}_n(D_*), \text{\ \ \ }n\geq 0, 
\end{eqnarray*}
induces an isomorphism of homology groups
\begin{eqnarray}\label{eq-0.1}
\iota_*: H_*(\{\text{Inf}_n(D_*), \partial_n\mid_{\text{Inf}_n(D_*)}\}_{n\geq 0})\overset{\cong}{\longrightarrow}H_*(\{\text{Sup}_n(D_*), \partial_n\mid_{\text{Sup}_n(D_*)}\}_{n\geq 0}). 
\end{eqnarray}
We call the homology groups in (\ref{eq-0.1}) the {\it embedded homology} of $D_*$ (cf. \cite[Section~2]{h1}), and denote the homology groups as $H_n(D_*)$, $n\geq 0$.

Now   we take  
\begin{eqnarray}\label{eq-2.99999}
D_n= R(\mathcal{H})_n, \text{\ \ \ }n\geq 0.  
\end{eqnarray} 
As chain complexes,   we have the inclusions  
\begin{eqnarray*}
&~&\{C_n(\delta\mathcal{H};R), \partial_n\mid_{C_n(\delta\mathcal{H};R)}\}_{n\geq 0}\subseteq\{\text{Inf}_n(R(\mathcal{H})_*), \partial_n\mid_{\text{Inf}_n(R(\mathcal{H})_*)}\}_{n\geq 0}\\
&\subseteq& \{\text{Sup}_n(R(\mathcal{H})_*), \partial_n\mid_{\text{Sup}_n(R(\mathcal{H})_*)}\}_{n\geq 0} 
\subseteq \{C_n(\Delta\mathcal{H};R), \partial_n\}_{n\geq 0}. 
\end{eqnarray*}
By substituting (\ref{eq-2.99999}) into (\ref{eq-0.1}), the {\it embedded homology groups} of $\mathcal{H}$ is defined as (cf. \cite[Subsection~3.2]{h1})
\begin{eqnarray*}
H_n(\mathcal{H})= H_n(R(\mathcal{H})_*), \text{\ \ \ }n\geq 0. 
\end{eqnarray*}
 In particular, if $\mathcal{H}$ is a simplcial complex, then (\ref{eq-2.99999}) is a chain complex and 
\begin{eqnarray*}
R(\mathcal{H})_n=\text{Inf}_n(R(\mathcal{H})_*)=\text{Sup}_*(R(\mathcal{H})_*), \text{\ \ \ }n\geq 0.  
\end{eqnarray*}
The embedded homology is reduced to the usual homology of simplcial complexes.  

\smallskip

{\bf 3. Morphisms of Hypergraphs}.  Thirdly,  we observe that morphisms between hypergraphs induce simplicial maps between the (lower-)associated simplicial complexes and homomorphisms between the embedded homology.  Let $\mathcal{H}$ and $\mathcal{H}'$ be hypergraphs. A {\it morphism} of hypergraphs from $\mathcal{H}$ to $\mathcal{H}'$ is  a map $\varphi$ sending a vertex of $\mathcal{H}$ to a vertex of $\mathcal{H}'$ such that whenever $\sigma=\{v_0,\ldots,v_k\}$ is a hyperedge of $\mathcal{H}$, $\varphi(\sigma) = \{\varphi(v_0), \ldots, \varphi(v_k)\}$ is a hyperedge of $\mathcal{H}'$.  Let $\varphi:\mathcal{H}\longrightarrow\mathcal{H}'$ be such a morphism.  By an  argument similar to \cite[Section~3.1]{h1},  $\varphi$  induces two simplicial maps 
\begin{eqnarray*}
\delta\varphi:  \delta \mathcal{H}\longrightarrow \delta\mathcal{H}',~~~~~~
\Delta\varphi:  \Delta \mathcal{H}\longrightarrow \Delta\mathcal{H}'
\end{eqnarray*}
such that $\varphi=(\Delta\varphi)\mid_{\mathcal{H}}$ and $\delta\varphi=\varphi\mid_{\delta\mathcal{H}}$.  
The simplicial maps $\delta\varphi$ and $\Delta\varphi$   induce  homomorphisms between the homology groups 
\begin{eqnarray}
(\delta\varphi)_*:&& H_*(\delta\mathcal{H})\longrightarrow H_*(\delta\mathcal{H}'),\label{e88-1}\\
(\Delta\varphi)_*:&& H_*(\Delta\mathcal{H})\longrightarrow H_*(\Delta\mathcal{H}'). \label{e88-2}
\end{eqnarray}
Moreover, by \cite[Proposition~3.7]{h1}, we have an induced homomorphism between the embedded homology 
\begin{eqnarray}
\varphi_*:&& H_*(\mathcal{H})\longrightarrow H_*(\mathcal{H}')\label{e88-3}.
\end{eqnarray}
In particular, if both $\mathcal{H}$ and $\mathcal{H}'$ are simplicial complexes, then $\varphi$ is a simplicial map     and  the three homomorphisms $(\delta\varphi)_*$, $(\Delta\varphi)_*$ and $\varphi_*$ are the same. 

\smallskip

{\bf 4. Examples}. Finally,  we give some examples to show that  $\Delta\mathcal{H}$,  $\delta\mathcal{H}$ and $H_*(\mathcal{H})$ detect  $\mathcal{H}$  from different aspects, in the remaining part of this section.


\begin{example}\label{ex82}
Let 
\begin{eqnarray*}
\mathcal{H}&=&\{ \{v_0,v_1,v_2,v_3\},\{v_0\}\},\\
\mathcal{H}'&=&\{ \{v_0,v_1,v_2,v_3\}, \{v_0,v_1\}, \{v_0,v_2\},\{v_0,v_3\},\{v_1,v_2\},\{v_1,v_3\},\{v_2,v_3\},\{v_0\}\}. 
\end{eqnarray*}
  Then  $\delta\mathcal{H}=\delta\mathcal{H}'=\{\{v_0\}\}$, and both $\Delta\mathcal{H}$ and $\Delta\mathcal{H}'$ are the tetrahedron.   Moreover,
$H_1(\mathcal{H})=0$ and $H_1(\mathcal{H}')=\mathbb{Z}^{\oplus 3}$. 
\end{example}

 
\begin{example}\label{ex81}
Let 
\begin{eqnarray*}
\mathcal{H}&=&\{\{v_0\},\{v_1\},\{v_2\}, \{v_3\},\{v_4\},\{v_5\},\\
&&\{v_0,v_1,v_3\}, \{v_1,v_2,v_4\},\{v_3,v_4,v_5\}\},\\
\mathcal{H}'&=&\{\{v_0\},\{v_1\},\{v_2\}, \{v_3\},\{v_4\},\{v_5\},\\
&&\{v_0,v_1,v_3\}, \{v_1,v_2,v_4\},\{v_1,v_3,v_4\},\{v_3,v_4,v_5\}\}. 
\end{eqnarray*}
  Then  $\delta\mathcal{H}=\delta\mathcal{H}'=\{\{v_0\},\{v_1\},\{v_2\}, \{v_3\},\{v_4\},\{v_5\}\}$, 
$H_n(\mathcal{H})=H_n(\mathcal{H}')=0$ for $n\geq 1$, and $H_0(\mathcal{H})=H_0(\mathcal{H}')=\mathbb{Z}^{\oplus 6}$.  
Moreover, $H_1(\Delta\mathcal{H})=\mathbb{Z}$, and $H_1(\Delta{\mathcal{H}'})=0$.

\begin{figure}[!htbp]
 \begin{center}
\begin{tikzpicture}
\coordinate [label=below left:$v_0$]    (A) at (1,0); 
 \coordinate [label=below right:$v_1$]   (B) at (2.5,0); 
 \coordinate  [label=below right:$v_2$]   (C) at (4,0); 
\coordinate  [label=left:$v_3$]   (D) at (1.75,2/2); 
\coordinate  [label=right:$v_4$]   (E) at (6.5/2,2/2); 
\coordinate  [label=right:$v_5$]   (F) at (5/2,4/2);

\fill (1,0) circle (2.5pt);
\fill (2.5,0) circle (2.5pt);
\fill (4,0) circle (2.5pt);
\fill (1.75,2/2) circle (2.5pt); 
\fill (6.5/2,2/2) circle (2.5pt); 
\fill (5/2,4/2) circle (2.5pt);

 \coordinate[label=left:$\mathcal{H}$:] (G) at (0.5/2,2/2);
 \draw [dashed,thick] (A) -- (B);
 \draw [dashed,thick] (B) -- (C);
  \draw [dashed,thick] (D) -- (A);
\draw [dashed,thick] (D) -- (B);
\draw [dashed,thick] (E) -- (C);
\draw [dashed,thick] (E) -- (B);
\draw [dashed,thick] (E) -- (F);
\draw [dashed,thick] (D) -- (F);
\draw [dashed, thick] (D) -- (E);

\fill [fill opacity=0.25][gray!100!white] (A) -- (B) -- (D) -- cycle;
\fill [fill opacity=0.25][gray!100!white] (B) -- (C) -- (E) -- cycle;
\fill [fill opacity=0.25][gray!100!white] (D) -- (E) -- (F) -- cycle;

\coordinate [label=below left:$v_0$]    (A) at (1+6,0); 
 \coordinate [label=below right:$v_1$]   (B) at (2.5+6,0); 
 \coordinate  [label=below right:$v_2$]   (C) at (4+6,0); 
\coordinate  [label=left:$v_3$]   (D) at (1.75+6,2/2); 
\coordinate  [label=right:$v_4$]   (E) at (6.5/2+6,2/2); 
\coordinate  [label=right:$v_5$]   (F) at (5/2+6,4/2); 

 \coordinate[label=left:$\Delta{\mathcal{H}}$:] (G) at (0.5/2+6,2/2);
 \draw  [thick](A) -- (B);
 \draw [thick] (B) -- (C);
  \draw [thick] (D) -- (A);
\draw [thick] (D) -- (B);
\draw [thick] (E) -- (C);
\draw [thick] (E) -- (B);
\draw [thick] (E) -- (F);
\draw [thick] (D) -- (F);
\draw [thick] (D) -- (E);

\fill [fill opacity=0.25][gray!100!white] (A) -- (B) -- (D) -- cycle;
\fill [fill opacity=0.25][gray!100!white] (B) -- (C) -- (E) -- cycle;
\fill [fill opacity=0.25][gray!100!white] (D) -- (E) -- (F) -- cycle;

\fill (7,0) circle (2.5pt);
\fill (8.5,0) circle (2.5pt);
\fill (10,0) circle (2.5pt);
\fill (7.75,2/2) circle (2.5pt); 
\fill (6.5/2+6,2/2) circle (2.5pt); 
\fill (5/2+6,4/2) circle (2.5pt); 

 \end{tikzpicture}
\end{center}

 \begin{center}
\begin{tikzpicture}
\coordinate [label=below left:$v_0$]    (A) at (2/2,0); 
 \coordinate [label=below right:$v_1$]   (B) at (5/2,0); 
 \coordinate  [label=below right:$v_2$]   (C) at (8/2,0); 
\coordinate  [label=left:$v_3$]   (D) at (3.5/2,2/2); 
\coordinate  [label=right:$v_4$]   (E) at (6.5/2,2/2); 
\coordinate  [label=right:$v_5$]   (F) at (5/2,4/2);

\fill (1,0) circle (2.5pt);
\fill (2.5,0) circle (2.5pt);
\fill (4,0) circle (2.5pt);
\fill (1.75,2/2) circle (2.5pt); 
\fill (6.5/2,2/2) circle (2.5pt); 
\fill (5/2,4/2) circle (2.5pt);

 \coordinate[label=left:$\mathcal{H}'$:] (G) at (0.5/2,2/2);
 \draw [dashed,thick] (A) -- (B);
 \draw [dashed,thick] (B) -- (C);
  \draw [dashed,thick] (D) -- (A);
\draw [dashed,thick] (D) -- (B);
\draw [dashed,thick] (E) -- (C);
\draw [dashed,thick] (E) -- (B);
\draw [dashed,thick] (E) -- (F);
\draw [dashed,thick] (D) -- (F);
\draw [dashed,thick] (D) -- (E);

\fill [fill opacity=0.25][gray!100!white] (A) -- (B) -- (D) -- cycle;
\fill [fill opacity=0.25][gray!100!white] (B) -- (C) -- (E) -- cycle;
\fill [fill opacity=0.25][gray!100!white] (D) -- (E) -- (F) -- cycle;
\fill [fill opacity=0.25][gray!100!white] (D) -- (E) -- (B) -- cycle;

\coordinate [label=below left:$v_0$]    (A) at (1+6,0); 
 \coordinate [label=below right:$v_1$]   (B) at (2.5+6,0); 
 \coordinate  [label=below right:$v_2$]   (C) at (4+6,0); 
\coordinate  [label=left:$v_3$]   (D) at (1.75+6,2/2); 
\coordinate  [label=right:$v_4$]   (E) at (6.5/2+6,2/2); 
\coordinate  [label=right:$v_5$]   (F) at (5/2+6,4/2); 

 \coordinate[label=left:$\Delta{\mathcal{H}'}$:] (G) at (0.5/2+6,2/2);
 \draw [thick] (A) -- (B);
 \draw [thick] (B) -- (C);
  \draw [thick] (D) -- (A);
\draw [thick] (D) -- (B);
\draw [thick] (E) -- (C);
\draw [thick] (E) -- (B);
\draw [thick] (E) -- (F);
\draw [thick] (D) -- (F);
\draw [thick] (D) -- (E);

\fill [fill opacity=0.25][gray!100!white] (A) -- (B) -- (D) -- cycle;
\fill [fill opacity=0.25][gray!100!white] (B) -- (C) -- (E) -- cycle;
\fill [fill opacity=0.25][gray!100!white] (D) -- (E) -- (F) -- cycle;
\fill [fill opacity=0.25][gray!100!white] (D) -- (E) -- (B) -- cycle;

\fill (7,0) circle (2.5pt);
\fill (8.5,0) circle (2.5pt);
\fill (10,0) circle (2.5pt);
\fill (7.75,2/2) circle (2.5pt); 
\fill (6.5/2+6,2/2) circle (2.5pt); 
\fill (5/2+6,4/2) circle (2.5pt); 
 \end{tikzpicture}
\end{center}
\caption{Example~\ref{ex81}.}
\end{figure}
\end{example}

Consider a hypergraph morphism $\varphi: \mathcal{H}\longrightarrow\mathcal{H}'$.  
The following example shows that (\ref{e88-1}), (\ref{e88-2}) and (\ref{e88-3})   can be distinct. 

\begin{example}\label{ex1}
Let $\mathcal {H}=\{\{v_0,v_1\},\{v_1,v_2\},\{v_0,v_2\}\}$. Let $\sigma=\{v_0,v_1,v_2\}$. Let $\mathcal{H}'=\mathcal{H}\bigcup\{\sigma\}$.   Let $\varphi$ be the canonical inclusion of $\mathcal{H}$ into $\mathcal{H}'$. Then 
\begin{enumerate}[(a). ]
\item
$H_0(\mathcal{H})=H_0(\mathcal{H}')=0$, $H_1(\mathcal{H})=\mathbb{Z}$, and $H_1(\mathcal{H}')=0$. Moreover,  $\varphi_*$ is the identity map from zero to zero in dimension $0$, and the zero map on $\mathbb{Z}$ in dimension $1$;
\item
$\delta \mathcal{H}=\delta\mathcal{H}'=\emptyset$. Moreover, $(\delta\varphi)_*$ is the identity map from zero to zero in all dimensions;
\item
$\Delta \mathcal{H}\simeq S^1$ and $\Delta\mathcal{H}'\simeq *$. Moreover, $(\Delta\varphi)_*$ is the identity map on $\mathbb{Z}$ in dimension $0$, and the zero map on $\mathbb{Z}$ in dimension $1$.   
\end{enumerate} 
\end{example}



    \section{Discrete Morse Functions  and Critical Hyperedges}\label{s3}
    
        In this section, we define the discrete  Morse functions on $\mathcal{H}$ and the critical hyperedges of the discrete Morse functions. 
    
    \smallskip
    
    

    \begin{definition}\label{def1}
   A function $f: \mathcal{H}\longrightarrow \mathbb{R}$ is called a {\it discrete Morse function} on $\mathcal{H}$ if for every $n\geq 0$ and every  $\alpha^{(n)}\in \mathcal{H}$,  both of the two conditions hold:
    \begin{enumerate}[(i).]
    \item
    $\#\{\beta^{(n+1)}>\alpha^{(n)}\mid f(\beta)\leq f(\alpha), \beta\in \mathcal{H}\}\leq 1$;
    \item
    $\#\{\gamma^{(n-1)}<\alpha^{(n)}\mid f(\gamma)\geq f(\alpha), \gamma\in \mathcal{H}\}\leq 1$. 
    \end{enumerate}
    \end{definition}
    
    Let $f$ be a discrete Morse function on $\mathcal{H}$. 

\begin{definition}\label{def2}
A hyperedge $\alpha^{(n)}\in\mathcal{H}$ is called {\it critical} if  
both of the following two conditions hold:
\begin{enumerate}[(i).]
\item
$\#\{\beta^{(n+1)}>\alpha^{(n)}\mid f(\beta)\leq f(\alpha),\beta\in\mathcal{H}\}=0$;
\item
$\#\{\gamma^{(n-1)}<\alpha^{(n)}\mid f(\gamma)\geq f(\alpha),\gamma\in\mathcal{H}\}=0$. 
\end{enumerate}
\end{definition}
We use $M(f,\mathcal{H})$ to denote the set of all critical hyperedges.   For $\alpha^{(n)}\in\mathcal{H}$,  it is direct to see that $\alpha\notin M(f,\mathcal{H})$  if at least one of the following conditions hold:

\begin{enumerate}[(A).]
\item
there exists $\beta^{(n+1)}>\alpha^{(n)}$, $\beta\in\mathcal{H}$, such that $f(\beta)\leq f(\alpha)$;
\item
there exists $\gamma^{(n-1)}<\alpha^{(n)}$, $\gamma\in\mathcal{H}$, such that $f(\gamma)\geq f(\alpha)$. 
\end{enumerate}
    
    In particular, if $\mathcal{H}$ is a simplicial complex, then 
    the discrete Morse functions defined in Definition~\ref{def1} are the same as the discrete Morse functions defined in \cite[Definition~2.1]{forman1};   
and the critical hyperedges defined in Definition~\ref{def2} are the same as the critical simplices defined in \cite[Definition~2.2]{forman1}.

     The next lemma follows from Definition~\ref{def1}. 
 It is a generalization of  \cite[Lemma~2.1]{forman1}.  

\begin{lemma}\label{lem1}
Let $\mathcal{H}$ and $\mathcal{H}'$ be two hypergraphs such that $\mathcal{H}'\subseteq \mathcal{H}$. Suppose $f:\mathcal{H}\longrightarrow\mathbb{R}$ is a discrete Morse function on $\mathcal{H}$. Let $f'=f\mid_{\mathcal{H}'}$  be  the restriction of $f$ to $\mathcal{H}'$.  Then $f'$ is a discrete Morse function on $\mathcal{H}'$.  \qed
\end{lemma}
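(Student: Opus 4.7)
The plan is to verify the two conditions in Definition~\ref{def1} for $f'$ on $\mathcal{H}'$ directly, by observing that each relevant counting set for $f'$ sits inside the corresponding set for $f$, so the hypothesis that $f$ is a discrete Morse function forces the required cardinality bound for $f'$.

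More precisely, I would fix an arbitrary $n\geq 0$ and an arbitrary $\alpha^{(n)}\in\mathcal{H}'$. Since $\mathcal{H}'\subseteq\mathcal{H}$, we have $\alpha\in\mathcal{H}$, and $f'(\alpha)=f(\alpha)$, $f'(\beta)=f(\beta)$ whenever $\beta\in\mathcal{H}'$. For condition~(i), I would note the set inclusion
\begin{eqnarray*}
\{\beta^{(n+1)}>\alpha^{(n)}\mid f'(\beta)\leq f'(\alpha),\ \beta\in\mathcal{H}'\}
\subseteq
\{\beta^{(n+1)}>\alpha^{(n)}\mid f(\beta)\leq f(\alpha),\ \beta\in\mathcal{H}\},
\end{eqnarray*}
which holds simply because every $\beta\in\mathcal{H}'$ lies in $\mathcal{H}$ and the values of $f$ and $f'$ agree on such $\beta$. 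Since the right-hand set has cardinality at most $1$ by Definition~\ref{def1}(i) applied to $f$ and $\alpha$, the left-hand set does as well, giving Definition~\ref{def1}(i) for $f'$. Condition~(ii) is verified in exactly the same way using the inclusion of the $(n-1)$-dimensional face-sets.

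There is essentially no obstacle here: the statement is a direct consequence of the set-theoretic fact that taking a subset cannot increase a cardinality, combined with the compatibility $f'=f|_{\mathcal{H}'}$. Thus the proof is a one-line check after the set inclusions are written down, and no structural feature of hypergraphs (such as the closure properties of simplicial complexes) is needed. The lemma is the natural generalization of \cite[Lemma~2.1]{forman1}, and the argument is in fact simpler in the hypergraph setting because no face-closure condition has to be respected by the restriction.
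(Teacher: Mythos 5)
Your proof is correct and matches the paper's intent: the paper states the lemma as following immediately from Definition~\ref{def1} (with no written proof), and your verification via the inclusion of the counting sets for $f'$ into those for $f$ is exactly the routine check being invoked.
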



The next proposition follows from Lemma~\ref{lem1} immediately. 

\begin{proposition}\label{pr1}
\begin{enumerate}[(i).]
\item
Let $\overline f: \Delta\mathcal{H}\longrightarrow\mathbb{R}$ be a discrete Morse function on $\Delta\mathcal{H}$. Then $f=\overline f\mid_{\mathcal{H}}$ is a discrete Morse function on $\mathcal{H}$;
\item
Let $f:\mathcal{H}\longrightarrow \mathbb{R}$ be a discrete Morse function on $\mathcal{H}$. Then ${\underline   f}=f\mid _{\delta\mathcal{H}}$ is a discrete Morse function on $\delta\mathcal{H}$. \qed 
\end{enumerate}
\end{proposition}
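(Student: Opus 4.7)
The plan is to observe that Proposition~\ref{pr1} follows directly from Lemma~\ref{lem1}, combined with the chain of inclusions $\delta\mathcal{H}\subseteq\mathcal{H}\subseteq\Delta\mathcal{H}$ established in (\ref{e1}). Both parts (i) and (ii) are simply instances of restricting a discrete Morse function to a sub-hypergraph, which is exactly the situation handled by Lemma~\ref{lem1}.

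For part (i), I would apply Lemma~\ref{lem1} taking the ambient hypergraph to be $\Delta\mathcal{H}$ and the sub-hypergraph to be $\mathcal{H}$. Since $\mathcal{H}\subseteq\Delta\mathcal{H}$ by (\ref{e1}) and $\overline{f}$ is a discrete Morse function on $\Delta\mathcal{H}$ by hypothesis, Lemma~\ref{lem1} immediately yields that $f=\overline{f}\mid_{\mathcal{H}}$ is a discrete Morse function on $\mathcal{H}$. For part (ii), I would apply Lemma~\ref{lem1} with the ambient hypergraph $\mathcal{H}$ and the sub-hypergraph $\delta\mathcal{H}$. Again by (\ref{e1}) we have $\delta\mathcal{H}\subseteq\mathcal{H}$, and $f$ is assumed to be a discrete Morse function on $\mathcal{H}$, so Lemma~\ref{lem1} yields that $\underline{f}=f\mid_{\delta\mathcal{H}}$ is a discrete Morse function on $\delta\mathcal{H}$.

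There is no real obstacle here: the entire content is packaged inside Lemma~\ref{lem1}, whose underlying mechanism is that the cardinality conditions (i) and (ii) of Definition~\ref{def1} only count hyperedges $\beta^{(n+1)}>\alpha^{(n)}$ and $\gamma^{(n-1)}<\alpha^{(n)}$ lying in the ambient hypergraph. Passing to a sub-hypergraph can only shrink these candidate sets, so the bound $\leq 1$ is automatically preserved. The only thing worth explicitly verifying, if desired, is that in part (i) every $\alpha\in\mathcal{H}$ is in particular a simplex of $\Delta\mathcal{H}$ on which $\overline{f}$ is defined, and in part (ii) every $\alpha\in\delta\mathcal{H}$ is a hyperedge of $\mathcal{H}$ on which $f$ is defined; both are immediate from the inclusions in (\ref{e1}).
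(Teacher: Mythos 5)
Your proposal is correct and matches the paper's own argument: the paper also derives Proposition~\ref{pr1} immediately from Lemma~\ref{lem1} applied to the inclusions $\mathcal{H}\subseteq\Delta\mathcal{H}$ and $\delta\mathcal{H}\subseteq\mathcal{H}$ from (\ref{e1}). Your added remark about why restriction preserves the counting conditions of Definition~\ref{def1} is a harmless elaboration of what the paper leaves implicit.
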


 By \cite[Section~4]{forman1},  there always exists a discrete Morse function on $\Delta\mathcal{H}$.  Hence the next corollary is a consequence of \cite{forman1} and Proposition~\ref{pr1}. 

\begin{corollary}\label{cor1}
For any hypergraph $\mathcal{H}$, there exist discrete Morse functions $\overline f$ on $\Delta\mathcal{H}$, $f$ on $\mathcal{H}$, and ${\underline   f}$ on $\delta\mathcal{H}$ such that $f=\overline f\mid_{\mathcal{H}}$  and ${\underline   f}= f\mid_{\delta\mathcal{H}}=\overline f\mid_{\delta\mathcal{H}}$.  \qed
\end{corollary}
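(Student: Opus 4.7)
The plan is to construct all three functions from a single source on the top-level simplicial complex, exploiting the fact that the existence problem is already solved by Forman in the simplicial setting. First I would invoke Forman's existence result \cite[Section~4]{forman1}, which guarantees that every finite simplicial complex admits a discrete Morse function; applied to $\Delta\mathcal{H}$ this produces $\overline f : \Delta\mathcal{H} \to \mathbb{R}$. If one wishes to be fully explicit, the assignment $\overline f(\sigma) = \dim \sigma$ already satisfies Definition~\ref{def1} vacuously on any simplicial complex, since the conditions $\overline f(\beta) \leq \overline f(\alpha)$ for $\beta^{(n+1)} > \alpha^{(n)}$ and $\overline f(\gamma) \geq \overline f(\alpha)$ for $\gamma^{(n-1)} < \alpha^{(n)}$ become $n+1 \leq n$ and $n-1 \geq n$ respectively, both of which fail, so there are in fact zero such $\beta$ and zero such $\gamma$.

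Next I would define $f = \overline f|_{\mathcal{H}}$ and $\underline f = \overline f|_{\delta\mathcal{H}}$, using the chain of inclusions $\delta\mathcal{H} \subseteq \mathcal{H} \subseteq \Delta\mathcal{H}$ recorded in (\ref{e1}). Proposition~\ref{pr1}(i) applied to $\overline f$ immediately gives that $f$ is a discrete Morse function on $\mathcal{H}$; Proposition~\ref{pr1}(ii) applied to $f$ then gives that $f|_{\delta\mathcal{H}}$ is a discrete Morse function on $\delta\mathcal{H}$. The required compatibility $\underline f = f|_{\delta\mathcal{H}} = \overline f|_{\delta\mathcal{H}}$ is tautological by transitivity of restriction along these inclusions.

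There is no genuine obstacle here: the corollary is a formal consequence of Forman's existence theorem combined with the restriction principle of Lemma~\ref{lem1}, already packaged in Proposition~\ref{pr1}. The only minor point to emphasize is that one never needs to reason about hyperedges directly; the single function $\overline f$ produced on the ambient simplicial complex $\Delta\mathcal{H}$ automatically descends to discrete Morse functions at all three levels $\Delta\mathcal{H}$, $\mathcal{H}$, and $\delta\mathcal{H}$ with the asserted compatibility built in.
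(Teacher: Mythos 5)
Your proposal is correct and is essentially the paper's own argument: the paper likewise obtains $\overline f$ on $\Delta\mathcal{H}$ from Forman's existence result \cite[Section~4]{forman1} and then deduces the corollary from Proposition~\ref{pr1} by restricting along $\delta\mathcal{H}\subseteq\mathcal{H}\subseteq\Delta\mathcal{H}$. Your explicit example $\overline f(\sigma)=\dim\sigma$ is a valid (and standard) witness, but adds nothing beyond the paper's route.
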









In \cite[Lemma~2.5]{forman1}, it is proved that for a discrete Morse function $f$ on a simplicial complex $\mathcal{K}$ and a simplex $\alpha\in\mathcal{K}$, the conditions (A) and (B)  cannot both be true.  However,  if we substitute $\mathcal{K}$ with a hypergraph $\mathcal{H}$ and let $f$ be a discrete Morse function on $\mathcal{H}$, then the  next   example shows that  for certain hyperedge $\alpha\in\mathcal{H}$,  both (A) and (B) can  be true. 

\begin{example}\label{ex-2.a}
Let $\mathcal{H}=\{\{v_0\},\{v_0,v_1\},\{v_0,v_1,v_2\}\}$. Let 
\begin{eqnarray*}
f(\{v_0\})=2,  \text{\ \ \ }
f(\{v_0,v_1\})=1,   \text{\ \ \ }
f(\{v_0,v_1,v_2 \})=0.  
\end{eqnarray*}
Then $f$ is a discrete Morse function on $\mathcal{H}$, and $\{v_0,v_1\}$ is not critical. 
\begin{enumerate}[(i).]
\item
The hyperedge $\{v_0,v_1\}$ satisfies both of the conditions (A) and (B); 
\item
The discrete Morse function
$f$ cannot be extended to a discrete Morse function on $\Delta\mathcal{H}$. 
\end{enumerate}
\end{example}

In Example~\ref{ex-2.a},  it is direct to verify (i).  To prove (ii),  we suppose to the contrary,  $f$ is given by the restriction $f=\overline f\mid_{\mathcal{H}}$, where $\overline f:\Delta\mathcal{H}\longrightarrow \mathbb{R}$ is a discrete Morse function on $\Delta\mathcal{H}$. Then since the conditions (A) and (B) cannot both be true for $\overline f$ and any fixed  simplex  in $\Delta\mathcal{H}$, (A) and (B) also cannot both be true for $f$ and the hyperedges in $\mathcal{H}$.   This contradicts with (i).  Hence we obtain (ii). 

\begin{example}\label{le-2.a}
Suppose $\mathcal{H}$ satisfies the following condition 
\begin{quote}
(C). for any $n\geq 1$ and any hyperedges $\beta^{(n+1)}>\alpha^{(n)}>\gamma^{(n-1)}$ of $\mathcal{H}$, there exists $\hat   \alpha^{(n)}\in\mathcal{H}$, $\hat   \alpha\neq \alpha$,  such that $\beta>\hat  \alpha>\gamma$. 
\end{quote}
Then conditions (A) and (B) cannot both be true. 
\end{example}

Example~\ref{le-2.a} can be obtained by an analogous  argument of  \cite[Lemma~2.5]{forman1}.   
In particular, if $\mathcal{H}$ is a simplicial complex, then (C) is satisfied and Example~\ref{le-2.a} gives  \cite[Lemma~2.5]{forman1}. 

By Example~\ref{ex-2.a}~(ii), the extension property of discrete Morse functions (cf.  \cite[Lemma~4.2]{forman1} for the simplicial complex case) may not hold for general hypergraphs. The next example shows that the extension property even may not hold for the hypergraphs satisfying condition (C). 

\begin{example}\label{ex-a.a}
Let $\mathcal{H}=\{\{v_0\},\{v_1\},\{v_2\}, \{v_0,v_1,v_2\}\}$. Let $f(\{v_0\})=f(\{v_1\})=f(\{v_2\})=2$,  $f(\{v_0,v_1,v_2\})=0$.   Then $f$ is a discrete Morse function on $\mathcal{H}$.  All the hyperedges are critical.  Moreover,  $f$ cannot be extended to be a discrete Morse function on $\Delta\mathcal{H}$. 
\end{example}

\section{Discrete Gradient Vector Fields}\label{s4}

  In this section, we define the discrete gradient vector fields on $\mathcal{H}$. 

\smallskip


\begin{definition}\label{def-a9}
A {\it discrete gradient vector field} $V$ on $\mathcal{H}$ is a collection of pairs $\{\alpha^{(n)}<\beta^{(n+1)}\}$   of hyperedges of $\mathcal{H}$,  $n\geq 0$,  such that  there is no nontrivial closed paths of the form
\begin{eqnarray*}
\alpha_0^{(n)}, \beta_0^{(n+1)}, \alpha_1^{(n)}, \beta_1^{(n+1)}, \alpha_2^{(n)}, \cdots, \alpha_r^{(n)}, \beta_r^{(n+1)}, \alpha_{r+1}^{(n)}=\alpha_0^{(n)},
\end{eqnarray*}
where for each $0\leq i\leq r$, $\{\alpha_i^{(n)}<\beta_i^{(n+1)}\}\in V$  and $\alpha_i\neq \alpha_{i+1} <\beta_i$. 
In addition, if 
each hyperedge of $\mathcal{H}$ is in at most one pair in $V$, then we call $V$ a {\it proper discrete gradient vector field}. 
\end{definition}

Given a discrete gradient vector field $V$ on $\mathcal{H}$,  we can obtain an $R$-linear map \begin{eqnarray}\label{eq-3.a}
V: R( \mathcal{H})_n\longrightarrow R(\mathcal{H})_{n+1},
\end{eqnarray} 
where $V$ sends each $\alpha^{(n)}\in \mathcal{H}$ to  $-\langle\partial\beta,\alpha\rangle\beta$ if $\{\alpha^{(n)}<\beta^{(n+1)}\}\in V$, and sends $\alpha^{(n)}$  to $0$ otherwise.  Here $\langle\partial\beta,\alpha\rangle$ is the incidence number  of $\beta$ and $\alpha$ in the chain complex $C_*(\Delta\mathcal{H};R)$ (cf. \cite[p. 98]{forman1}).  Hence $\langle\partial\beta,\alpha\rangle$ takes the values $\pm 1$ where $1$ refers to the multiplicative unity of $R$.  We observe that $V$ is proper iff. as an $R$-linear map, 
\begin{eqnarray*}
V\circ V=0.
\end{eqnarray*}

 In particular, if $\mathcal{H}$ is a simplicial complex, then the proper discrete gradient vector fields defined in Definition~\ref{def3} and (\ref{eq-3.a}) are  exactly the discrete gradient vector fields defined in \cite[Definition~6.1]{forman1}. 

\smallskip

Let $f$ be a discrete Morse function on $\mathcal{H}$.  

\begin{definition}\label{def3}
Let $\alpha^{(n)}\in\mathcal{H}$. If there exists $\beta^{(n+1)}>\alpha^{(n)}$, $\beta\in \mathcal{H}$,  such that $f(\beta)\leq f(\alpha)$, then we set 
\begin{eqnarray*}
V(\alpha)=-\langle\partial\beta,\alpha\rangle\beta.
\end{eqnarray*}
    If there does not exist such $\beta$, then we set 
  \begin{eqnarray*}
  V(\alpha)=0.
  \end{eqnarray*}
   We call $V$ the {\it discrete gradient vector field} of $f$ and write $V=\text{grad } f$.    
\end{definition}

We can regard $\text{grad } f$ defined in Definition~\ref{def3}  as the collection of  pairs  $\{\alpha^{(n)}<\beta^{(n+1)}\}$ of the hyperedges of $\mathcal{H}$ where  
\begin{eqnarray*}
(\text{grad } f ) \alpha=-\langle\partial\beta,\alpha\rangle\beta. 
\end{eqnarray*}
It can be verified that $\text{grad } f$ is a discrete gradient vector field on $\mathcal{H}$.  
The next example shows that the discrete gradient vector fields of  discrete Morse functions on hypergraphs may not be proper.

\begin{example}\label{ex-3.1}
Consider the hypergraph $\mathcal{H}$  and the discrete Morse function $f$ given in Example~\ref{ex-2.a}. 
Let $V$ be the discrete gradient vector field of $f$. Then 
\begin{eqnarray*}
V(\{v_0\})&=&\{v_0,v_1\},  \\
V(\{v_0,v_1\})&=& -\{v_0,v_1,v_2\}. 
\end{eqnarray*}
Hence $V\circ V (\{v_0\})= -\{v_0,v_1,v_2\}\neq 0$. 
\end{example}

 By a similar argument with   \cite[Theorem~6.3]{forman1}~(1),  we can prove that  if $\mathcal{H}$ satisfies condition (C) (cf. Example~\ref{le-2.a}),  then $\text{grad } f$ is a proper discrete gradient vector field.   
Moreover, if $\mathcal{H}$ is a simplicial complex,  then it is proved in \cite[Theorem~3.5]{forman2} that for any proper discrete gradient vector field $V$ defined in Definition~\ref{def-a9},  there exist discrete Morse functions $f: \mathcal{H}\longrightarrow \mathbb{R}$ such that $V=\text{grad } f$. 

\smallskip

\section{Auxiliary Results}

In this section, we prove some auxiliary results about discrete Morse functions and discrete gradient vector fields on hypergraphs.  The main result of this section is Proposition~\ref{co-4.bb}. 

\smallskip 

\begin{lemma}\label{lem2}
Let $\mathcal{H}$ and $\mathcal{H}'$ be two hyperedges such that $\mathcal{H}'\subseteq\mathcal{H}$. Let $f:\mathcal{H}\longrightarrow\mathbb{R}$ be a discrete Morse function on $\mathcal{H}$ and let $f'=f\mid_{\mathcal{H}'}$.  Then 
\begin{eqnarray*}
M(f,\mathcal{H})\cap\mathcal{H}'\subseteq M(f',\mathcal{H}').
\end{eqnarray*}  
\end{lemma}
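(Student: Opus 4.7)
The plan is to fix an arbitrary hyperedge $\alpha^{(n)} \in M(f,\mathcal{H}) \cap \mathcal{H}'$ and verify the two defining conditions of Definition~\ref{def2} for $\alpha$ regarded as a hyperedge of $\mathcal{H}'$ with respect to $f'$. The key observation is that the hypothesis $\mathcal{H}' \subseteq \mathcal{H}$ together with $f' = f\mid_{\mathcal{H}'}$ means the set of ``competitors'' seen by $\alpha$ in $\mathcal{H}'$ is a subset of the set of competitors seen in $\mathcal{H}$, with identical function values on that subset.

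Concretely, I would record the inclusions
\[
\{\beta^{(n+1)} \in \mathcal{H}' \mid \beta > \alpha\} \subseteq \{\beta^{(n+1)} \in \mathcal{H} \mid \beta > \alpha\}
\]
and
\[
\{\gamma^{(n-1)} \in \mathcal{H}' \mid \gamma < \alpha\} \subseteq \{\gamma^{(n-1)} \in \mathcal{H} \mid \gamma < \alpha\}.
\]
Since $f'(\sigma) = f(\sigma)$ for every $\sigma \in \mathcal{H}'$, it follows that
\[
\#\{\beta^{(n+1)} > \alpha \mid f'(\beta) \leq f'(\alpha),\, \beta \in \mathcal{H}'\} \leq \#\{\beta^{(n+1)} > \alpha \mid f(\beta) \leq f(\alpha),\, \beta \in \mathcal{H}\},
\]
and similarly for the condition involving $\gamma^{(n-1)} < \alpha$. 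The right-hand cardinalities vanish by the assumption $\alpha \in M(f,\mathcal{H})$ together with Definition~\ref{def2}, so the left-hand cardinalities vanish as well, placing $\alpha$ in $M(f',\mathcal{H}')$.

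There is no substantive obstacle here: the statement is a routine monotonicity property of criticality under restriction, and the only thing to check is that no new violating pair appears when one shrinks the ambient hypergraph, which is immediate because shrinking can only remove candidate hyperedges, never add them. I would close by remarking that the reverse inclusion generally fails, since a hyperedge $\alpha \in \mathcal{H}'$ that is paired with some $\beta \in \mathcal{H} \setminus \mathcal{H}'$ in Definition~\ref{def2} may become critical once we restrict to $\mathcal{H}'$.
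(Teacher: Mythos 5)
Your proposal is correct and matches the paper's approach: the paper simply states that the lemma follows from a straightforward verification using Definition~\ref{def2}, and your argument is precisely that verification spelled out (restricting to $\mathcal{H}'$ can only shrink the sets of competitors counted in Definition~\ref{def2}, so the zero counts for $f$ on $\mathcal{H}$ force zero counts for $f'$ on $\mathcal{H}'$). Your closing remark that the reverse inclusion can fail is a correct and sensible observation, consistent with the paper's later discussion.
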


\begin{proof}
The lemma follows from a  straight-forward verification by using Definition~\ref{def2}. 
\end{proof}

The next lemma follows from Lemma~\ref{lem2} immediately. 

\begin{lemma}\label{pr-1.88}
Let $\overline f: \Delta\mathcal{H}\longrightarrow \mathbb{R}$ be a discrete Morse function on $\Delta\mathcal{H}$.  Let $f: \mathcal{H}\longrightarrow \mathbb{R}$  and $\underline  f: \delta\mathcal{H}\longrightarrow\mathbb{R}$ be the discrete Morse functions induced from $\overline f$.  Then 
\begin{eqnarray}
M(\overline f,\Delta\mathcal{H})\cap \mathcal{H}&\subseteq& M(f,\mathcal{H}), \label{eq-1.z}\\
M(\overline f,\Delta\mathcal{H})\cap\delta\mathcal{H}&\subseteq& M({\underline   f},\delta\mathcal{H}),\nonumber\\
M(f,\mathcal{H})\cap \delta\mathcal{H}&\subseteq& M({\underline   f},\delta\mathcal{H}). \nonumber 
\end{eqnarray}
\qed
\end{lemma}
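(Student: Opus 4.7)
The plan is to derive all three inclusions by direct substitution into Lemma~\ref{lem2}, using the chain of hypergraph inclusions $\delta\mathcal{H}\subseteq\mathcal{H}\subseteq\Delta\mathcal{H}$ recorded in (\ref{e1}). Since $\overline f$ is a discrete Morse function on $\Delta\mathcal{H}$, the induced maps $f=\overline f\mid_{\mathcal{H}}$ and $\underline f=\overline f\mid_{\delta\mathcal{H}}=f\mid_{\delta\mathcal{H}}$ (as ensured by Proposition~\ref{pr1} and Corollary~\ref{cor1}) are discrete Morse functions on $\mathcal{H}$ and $\delta\mathcal{H}$ respectively, so Lemma~\ref{lem2} applies in each relevant configuration.

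For the first inclusion I would take $\mathcal{H}'=\mathcal{H}$ and the ambient hypergraph to be $\Delta\mathcal{H}$, with $f'=\overline f\mid_{\mathcal{H}}=f$; Lemma~\ref{lem2} then yields $M(\overline f,\Delta\mathcal{H})\cap\mathcal{H}\subseteq M(f,\mathcal{H})$. For the second, I would take $\mathcal{H}'=\delta\mathcal{H}$ with the ambient hypergraph again $\Delta\mathcal{H}$, so that $f'=\overline f\mid_{\delta\mathcal{H}}=\underline f$; Lemma~\ref{lem2} gives $M(\overline f,\Delta\mathcal{H})\cap\delta\mathcal{H}\subseteq M(\underline f,\delta\mathcal{H})$. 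For the third, I would take the ambient hypergraph to be $\mathcal{H}$ and $\mathcal{H}'=\delta\mathcal{H}$, with $f'=f\mid_{\delta\mathcal{H}}=\underline f$; a final application of Lemma~\ref{lem2} produces $M(f,\mathcal{H})\cap\delta\mathcal{H}\subseteq M(\underline f,\delta\mathcal{H})$.

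There is essentially no obstacle here — the only point requiring a brief verification is the compatibility of the restrictions, namely that $\overline f\mid_{\delta\mathcal{H}}$ coincides with $f\mid_{\delta\mathcal{H}}$, which is immediate from the definition $f=\overline f\mid_{\mathcal{H}}$ together with $\delta\mathcal{H}\subseteq\mathcal{H}$. Once this is noted, each of the three inclusions is a direct instance of Lemma~\ref{lem2}, so the proof reduces to three lines and can be closed with a \qed.
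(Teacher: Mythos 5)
Your proof is correct and matches the paper's approach: the paper simply notes that this lemma follows immediately from Lemma~\ref{lem2}, and your three instantiations (with ambient hypergraph $\Delta\mathcal{H}$ or $\mathcal{H}$ and subhypergraph $\mathcal{H}$ or $\delta\mathcal{H}$) are exactly the intended applications. The compatibility of restrictions you verify is indeed the only point worth mentioning, so nothing is missing.
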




\begin{lemma}\label{le-9.1}
Let $\mathcal{H}$ and $\mathcal{H}'$ be two hypergraphs such that $\mathcal{H}'\subseteq\mathcal{H}$.  Let $V$ be a proper discrete gradient vector field on $\mathcal{H}'$. Then $V$ is also a proper discrete gradient vector field on $\mathcal{H}$. 
\end{lemma}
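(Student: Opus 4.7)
The plan is to verify directly from Definition~\ref{def-a9} that the two defining conditions for a proper discrete gradient vector field---absence of nontrivial closed paths and the ``at most one pair'' condition---are preserved when we enlarge the underlying hypergraph from $\mathcal{H}'$ to $\mathcal{H}$. The essential point is that $V$ as a set of pairs does not change when we pass to the larger hypergraph, and every hyperedge appearing in some pair of $V$ automatically lies in $\mathcal{H}'$. So really this is a bookkeeping argument rather than anything substantial.

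First I would make the trivial observation that since every pair $\{\alpha^{(n)}<\beta^{(n+1)}\}\in V$ consists of hyperedges of $\mathcal{H}'$, and $\mathcal{H}'\subseteq\mathcal{H}$, these pairs also consist of hyperedges of $\mathcal{H}$. Hence $V$ is a legitimate collection of pairs of hyperedges of $\mathcal{H}$ in the sense of Definition~\ref{def-a9}. It remains only to verify the two conditions on $V$ viewed inside $\mathcal{H}$.

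For the ``no nontrivial closed path'' condition, suppose for contradiction that there exists a nontrivial closed path
\begin{equation*}
\alpha_0^{(n)},\beta_0^{(n+1)},\alpha_1^{(n)},\beta_1^{(n+1)},\ldots,\alpha_r^{(n)},\beta_r^{(n+1)},\alpha_{r+1}^{(n)}=\alpha_0^{(n)}
\end{equation*}
in $V$ regarded as living on $\mathcal{H}$. By definition each pair $\{\alpha_i^{(n)}<\beta_i^{(n+1)}\}$ belongs to $V$, so each $\alpha_i$ and each $\beta_i$ is a hyperedge of $\mathcal{H}'$. In particular, $\alpha_{i+1}$ is the first coordinate of the pair $\{\alpha_{i+1}^{(n)}<\beta_{i+1}^{(n+1)}\}\in V$, so it also lies in $\mathcal{H}'$. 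Thus the same sequence is a nontrivial closed path for $V$ on $\mathcal{H}'$, contradicting the hypothesis that $V$ is a discrete gradient vector field on $\mathcal{H}'$.

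For the properness condition, take any hyperedge $\alpha$ of $\mathcal{H}$. If $\alpha\in\mathcal{H}'$, then by the properness of $V$ on $\mathcal{H}'$, $\alpha$ belongs to at most one pair of $V$. If $\alpha\in\mathcal{H}\setminus\mathcal{H}'$, then $\alpha$ cannot appear in any pair of $V$ at all, because every pair of $V$ is contained in $\mathcal{H}'$. In either case $\alpha$ lies in at most one pair of $V$, so $V$ is a proper discrete gradient vector field on $\mathcal{H}$. There is no genuine obstacle here; the only place one has to be mildly careful is noticing that the ``next $\alpha$'' in a would-be closed path is automatically in $\mathcal{H}'$ because it is the lower element of the succeeding pair in $V$.
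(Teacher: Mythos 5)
Your proof is correct, and it is more elementary than the paper's. You argue directly from Definition~\ref{def-a9}: every hyperedge occurring in a pair of $V$, and hence every term of a would-be nontrivial closed path (each $\alpha_i$ and $\beta_i$ is forced by the definition to belong to a pair of $V$), already lies in $\mathcal{H}'$, and the proper-subset relations $\alpha_{i+1}<\beta_i$ are intrinsic to the hyperedges themselves, not to the ambient hypergraph; likewise a hyperedge of $\mathcal{H}\setminus\mathcal{H}'$ can appear in no pair, so properness is inherited. The paper instead generalizes Forman's Hasse-diagram argument: it builds digraphs $D_{\mathcal{H}}$ and $D_{\mathcal{H}'}$, reverses the arrows corresponding to pairs of $V$ to get $D_{\mathcal{H}',V}$, forms $D_{\mathcal{H},V}$ by adjoining the edges of $E(D_{\mathcal{H}})\setminus E(D_{\mathcal{H}'})$ (each of which has an endpoint outside $\mathcal{H}'$), and argues that no nontrivial closed directed path can arise, invoking the digraph characterization of gradient vector fields from Forman's user's guide. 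What the paper's route buys is the explicit modified Hasse diagram for hypergraphs, a tool in the spirit of the rest of the machinery; what your route buys is brevity and self-containedness, since it exposes that the no-closed-path condition in Definition~\ref{def-a9} refers only to $V$ itself and therefore transfers verbatim from $\mathcal{H}'$ to $\mathcal{H}$. Both arguments are sound.
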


\begin{proof}
Let $V$ be a proper discrete gradient vector field on $\mathcal{H}'$.  We generalize the Hasse diagram construction (cf. \cite[Section~6]{forman2}) of simplicial complexes and apply a similar argument to hypergraphs.  We construct a digraph $D_\mathcal{H}$ (resp. $D_{\mathcal{H}'}$) whose vertices are in 1-1 correspondence with the hyperedges of $\mathcal{H}$ (resp. $\mathcal{H}'$).  For any hyperedges $\alpha$ and $\beta$  of $\mathcal{H}$ (resp. of $\mathcal{H}'$), we assign   a directed edge in $D_\mathcal{H}$  (resp. in $D_{\mathcal{H}'}$) from $\beta$ to $\alpha$, denoted as $\beta\to\alpha$, iff. $\alpha^{(n)}<\beta^{(n+1)}$ for some $n\geq 0$.  For any directed edge $\beta^{(n+1)}\to \alpha^{(n)}$ in $D_{\mathcal{H}'}$, if  $\{\alpha^{(n)},\beta^{(n+1)}\}\in V$, then we reverse the direction of $\beta\to\alpha$ in $D_{\mathcal{H}'}$ and obtain a  new directed edge $\alpha\to\beta$.  We obtain a new digraph $D_{\mathcal{H}',V}$.  Let the vertices of $D_{\mathcal{H},V}$ be the vertices of $D_{\mathcal{H}}$.  Let 
\begin{eqnarray}\label{eq-888}
E(D_{\mathcal{H},V})= E(D_{\mathcal{H}',V})\cup  ( E(D_{\mathcal{H}})\setminus E(D_{\mathcal{H}'})),
\end{eqnarray}
where $E(-)$ denotes the set of directed edges of a digraph.  We notice that for any directed edge $\beta\to\alpha$ in  $E(D_{\mathcal{H}})\setminus E(D_{\mathcal{H}'})$,  at least one of $\alpha$ and $\beta$ is not a vertex of $D_{\mathcal{H}'}$.  Hence the union in (\ref{eq-888}) is a disjoint union.  Since there is no nontrivial closed directed path in $E(D_{{\mathcal{H}'},V})$,   there is no nontrivial closed directed path in $E(D_{\mathcal{H},V})$ as well. By a similar argument of \cite[Theorem~6.2]{forman2}, it follows that  $V$ is a discrete gradient vector field on $\mathcal{H}$. Moreover,  each hyperedge of $\mathcal{H}$ appears in at most one pair  in $V$.   Hence $V$ is a proper discrete gradient vector field on $\mathcal{H}$. 
\end{proof}

The next lemma is a consequence of Lemma~\ref{le-9.1}.

\begin{lemma}\label{co-9.1}
Let $V$ be a proper discrete gradient vector field on $\mathcal{H}$. Then $V$ extends to   a proper discrete gradient vector field $\overline V$ on $\Delta\mathcal{H}$ such that $\overline V\mid _\mathcal{H}=V$ and  $\overline V\mid_{\Delta\mathcal{H}\setminus\mathcal{H}}=0$. \qed
\end{lemma}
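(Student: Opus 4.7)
The plan is to invoke Lemma~\ref{le-9.1} with the roles played by the pair $(\mathcal{H}', \mathcal{H})$ in that lemma taken, respectively, by $(\mathcal{H}, \Delta\mathcal{H})$ in the present statement. Since the inclusion $\mathcal{H}\subseteq\Delta\mathcal{H}$ holds by (\ref{e1}), this substitution is legitimate. Thus, viewing $V$ literally as the same collection of pairs $\{\alpha^{(n)}<\beta^{(n+1)}\}$ of hyperedges of $\mathcal{H}$ (which are also simplices of $\Delta\mathcal{H}$), Lemma~\ref{le-9.1} gives immediately that this collection constitutes a proper discrete gradient vector field on the larger hypergraph $\Delta\mathcal{H}$.

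Define $\overline V$ on $\Delta\mathcal{H}$ to be precisely this collection, so that no pair in $\overline V$ involves any simplex of $\Delta\mathcal{H}\setminus\mathcal{H}$. By construction $\overline V\mid_{\mathcal{H}} = V$ and $\overline V\mid_{\Delta\mathcal{H}\setminus\mathcal{H}} = 0$; properness and the absence of non-trivial closed $V$-paths in $\Delta\mathcal{H}$ have already been secured by the application of Lemma~\ref{le-9.1}. There is no real obstacle here, since the Hasse-diagram argument inside the proof of Lemma~\ref{le-9.1} has already done the work of checking that enlarging the ambient hypergraph (here from $\mathcal{H}$ to $\Delta\mathcal{H}$) while keeping the pairing the same does not introduce new closed directed paths, because the only additional directed edges in the Hasse digraph lie in $E(D_{\Delta\mathcal{H}})\setminus E(D_{\mathcal{H}})$, and each such edge touches at least one vertex outside $\mathcal{H}$, which is never paired in $\overline V$.
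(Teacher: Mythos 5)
Your proposal is correct and matches the paper's own argument: the paper derives Lemma~\ref{co-9.1} directly as a consequence of Lemma~\ref{le-9.1}, applied to the inclusion $\mathcal{H}\subseteq\Delta\mathcal{H}$, with $\overline V$ taken to be the same collection of pairs (hence vanishing on $\Delta\mathcal{H}\setminus\mathcal{H}$), exactly as you do.
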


The next lemma follows from Lemma~\ref{co-9.1}.

\begin{lemma}\label{pr-9.1}
Suppose $\mathcal{H}$ is a hypergraph satisfying condition (C) (cf.  Example~\ref{le-2.a}). 
Let $g$ be a discrete Morse function on $\mathcal{H}$.  Then there exists a discrete Morse function $\overline f$ on $\Delta\mathcal{H}$ such that 
\begin{enumerate}[(i).]
\item
$\text{grad } f=\text{grad } g$ where $f=\overline f\mid_\mathcal{H}$; 
\item
 $(\text{grad } \overline f)\mid_{\Delta\mathcal{H}\setminus\mathcal{H}}=0$. 
\end{enumerate}
\end{lemma}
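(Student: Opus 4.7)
The plan is to realize $g$'s gradient vector field on the ambient simplicial complex $\Delta\mathcal{H}$ and then appeal to Forman's realization theorem. Concretely, the three ingredients will be: (a) under condition (C), the vector field $V := \text{grad } g$ is a proper discrete gradient vector field on $\mathcal{H}$, which is the generalization of \cite[Theorem~6.3]{forman1}(1) recorded in the paragraph following Example~\ref{ex-3.1}; (b) the extension result Lemma~\ref{co-9.1}, producing a proper discrete gradient vector field $\overline V$ on $\Delta\mathcal{H}$ with $\overline V\mid_\mathcal{H}=V$ and $\overline V\mid_{\Delta\mathcal{H}\setminus\mathcal{H}}=0$; and (c) Forman's realization theorem \cite[Theorem~3.5]{forman2}, which for the simplicial complex $\Delta\mathcal{H}$ supplies a discrete Morse function $\overline f$ on $\Delta\mathcal{H}$ with $\text{grad } \overline f=\overline V$.

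Given (a)--(c), the assertions of the lemma are almost immediate. First, (ii) follows at once: $\text{grad } \overline f=\overline V$ by (c), and $\overline V$ vanishes on $\Delta\mathcal{H}\setminus\mathcal{H}$ by (b). For (i), I would set $f:=\overline f\mid_\mathcal{H}$, which is a discrete Morse function on $\mathcal{H}$ by Proposition~\ref{pr1}, and then verify that the same set of pairs $\{\alpha^{(n)}<\beta^{(n+1)}\}\subseteq\mathcal{H}$ is selected by $\text{grad } f$ and $\text{grad } g$. If $\{\alpha,\beta\}\in V=\text{grad } g$, then by (b) it lies in $\overline V=\text{grad } \overline f$, hence $\overline f(\beta)\leq\overline f(\alpha)$; since $\alpha,\beta\in\mathcal{H}$ this means $f(\beta)\leq f(\alpha)$, and Definition~\ref{def3} together with the uniqueness clause in Definition~\ref{def1} put $\{\alpha,\beta\}\in\text{grad } f$. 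Conversely, if $\{\alpha,\beta\}\in\text{grad } f$ then $\overline f(\beta)\leq\overline f(\alpha)$ with $\alpha,\beta\in\mathcal{H}\subseteq\Delta\mathcal{H}$, so $\{\alpha,\beta\}$ is a pair of $\text{grad } \overline f=\overline V$, and restriction to $\mathcal{H}$ via (b) gives $\{\alpha,\beta\}\in V=\text{grad } g$.

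The main subtlety is that one cannot argue $\text{grad } f=\text{grad } g$ by direct function equality, because the $\overline f$ produced by (c) is only guaranteed to induce the prescribed gradient and in general will not agree with $g$ pointwise on $\mathcal{H}$. The workaround, indicated above, is to stay entirely at the level of pairings and exploit the fact that on a simplicial complex a discrete Morse function determines its gradient via the unique-coface property of Definition~\ref{def1}, transported along the clean restriction $\overline V\mid_\mathcal{H}=V$ from Lemma~\ref{co-9.1}. Once this compatibility is in hand, the proof consists entirely of assembling (a)--(c) and no further computation is required.
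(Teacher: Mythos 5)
Your proposal is correct and follows essentially the same route as the paper: properness of $\text{grad } g$ under condition (C), extension to $\Delta\mathcal{H}$ via Lemma~\ref{co-9.1}, and realization of the extended vector field by a discrete Morse function $\overline f$ via \cite[Theorem~3.5]{forman2}. The only difference is cosmetic: where the paper concludes (i) by the chain of equalities $\text{grad }(\overline f\mid_{\mathcal{H}})=(\text{grad }\overline f)\mid_{\mathcal{H}}=\text{grad } g$ (resting on the restriction compatibility of Lemma~\ref{le-4.x}), you verify the same compatibility directly at the level of pairs, which is a valid spelling-out of the same step.
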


\begin{proof}
Since $\mathcal{H}$ satisfies condition (C), by Example~\ref{le-2.a},  $\text{grad } g$ is a proper discrete gradient vector field on $\mathcal{H}$.  
By Lemma~\ref{co-9.1},  $\text{grad } g$ extends to a proper discrete gradient vector field $\overline {\text{grad }g }$ on $\Delta\mathcal{H}$ such that
\begin{eqnarray*}
\overline{\text{grad }g} \mid_\mathcal{H}=\text{grad } g
\end{eqnarray*}
and
\begin{eqnarray*}
\overline{\text{grad } g}\mid_{\Delta\mathcal{H}\setminus\mathcal{H}}=0. 
\end{eqnarray*}
By \cite[Theorem~3.5]{forman2},  there exists a discrete Morse function $\overline f$ on $\Delta\mathcal{H}$ such that $\overline {\text{grad } g}=\text{grad }\overline f$.  Hence 
\begin{eqnarray*}
\text{grad } f = \text{grad } (\overline f\mid_{\mathcal{H}})
= (\text{grad }\overline f)\mid_{\mathcal{H}}=\overline {\text{grad } g}\mid_\mathcal{H}
= {\text{grad } g}, 
\end{eqnarray*}
and
\begin{eqnarray*}
\text{grad }{\overline f}\mid_{\Delta\mathcal{H}\setminus\mathcal{H}}=\overline{\text{grad } g}\mid_{\Delta\mathcal{H}\setminus\mathcal{H}}=0. 
\end{eqnarray*}
The assertion is obtained. 
\end{proof}

By Lemma~\ref{pr-9.1}, in order to study the  discrete gradient vector fields of discrete Morse functions on hypergraphs satisfying condition (C),  it is sufficient to study the discrete gradient vector fields of discrete Morse functions on  associated simplicial complexes as well as the restrictions of the discrete gradient vector fields to the hypergraphs.

\smallskip



\begin{lemma}\label{le-4.x}
Let $\mathcal{H}$ and $\mathcal{H}'$ be two hypergraphs such that $\mathcal{H}'\subseteq\mathcal{H}$.  Let $f$ and $f'$  be discrete Morse functions on $\mathcal{H}$ and $\mathcal{H}'$ respectively such that $f'=f\mid_{\mathcal{H}'}$.  Let $V=\text{grad } f$ and $V'=\text{grad }f'$.  Then 
\begin{eqnarray}\label{eq-4.81}
V'=\pi(\mathcal{H},\mathcal{H}')\circ V
\end{eqnarray}
where $\pi(\mathcal{H},\mathcal{H}')$ is the canonical projection from $R(\mathcal{H})_*$ to $R(\mathcal{H}')_*$ sending an  $R$-linear combination of hyperedges
\begin{eqnarray*}
\sum_{i} x_i\sigma_i + \sum _{j}y_j \tau_j, 
\end{eqnarray*} 
where $\sigma_i\in\mathcal{H}'$, $\tau_j\in\mathcal{H}\setminus \mathcal{H}'$  and  $x_i, y_j\in R$, 
to the $R$-linear combination of hyperedges
\begin{eqnarray*}
\sum _{i} x_i\sigma_i.
\end{eqnarray*}
\end{lemma}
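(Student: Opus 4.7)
The plan is to verify the identity $V' = \pi(\mathcal{H},\mathcal{H}')\circ V$ pointwise on each basis element $\alpha^{(n)}\in\mathcal{H}'$ of $R(\mathcal{H}')_n$ (viewed inside $R(\mathcal{H})_n$), after which the full claim for arbitrary chains follows by $R$-linearity. Fix such an $\alpha$. Definition~\ref{def1}(i) applied to $f$ on $\mathcal{H}$ guarantees that the set $S_\mathcal{H}(\alpha)=\{\beta^{(n+1)}\in\mathcal{H}\mid \beta>\alpha,\ f(\beta)\leq f(\alpha)\}$ has cardinality at most one; similarly $S_{\mathcal{H}'}(\alpha)=\{\beta^{(n+1)}\in\mathcal{H}'\mid \beta>\alpha,\ f'(\beta)\leq f'(\alpha)\}$ has cardinality at most one. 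Since $f'=f|_{\mathcal{H}'}$ and $\mathcal{H}'\subseteq\mathcal{H}$, one has the inclusion $S_{\mathcal{H}'}(\alpha)\subseteq S_\mathcal{H}(\alpha)$. The strategy is then to split into three mutually exclusive cases according to whether $S_\mathcal{H}(\alpha)$ is empty, is a singleton lying in $\mathcal{H}'$, or is a singleton lying in $\mathcal{H}\setminus\mathcal{H}'$, and to check the identity directly in each case.

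In the first case $S_\mathcal{H}(\alpha)=\emptyset$, Definition~\ref{def3} gives $V(\alpha)=0$, so $\pi(\mathcal{H},\mathcal{H}')(V(\alpha))=0$; moreover $S_{\mathcal{H}'}(\alpha)=\emptyset$ as well, so $V'(\alpha)=0$, and equality holds. In the second case $S_\mathcal{H}(\alpha)=\{\beta\}$ with $\beta\in\mathcal{H}'$, then $\beta$ clearly lies in $S_{\mathcal{H}'}(\alpha)$, so by the at-most-one bound we have $S_{\mathcal{H}'}(\alpha)=\{\beta\}$ too. Hence $V(\alpha)=-\langle\partial\beta,\alpha\rangle\beta=V'(\alpha)$, and since $\beta\in\mathcal{H}'$ the projection $\pi(\mathcal{H},\mathcal{H}')$ fixes $\beta$, giving $\pi(\mathcal{H},\mathcal{H}')(V(\alpha))=V'(\alpha)$. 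In the third case $S_\mathcal{H}(\alpha)=\{\beta\}$ with $\beta\in\mathcal{H}\setminus\mathcal{H}'$, then any $\beta'\in S_{\mathcal{H}'}(\alpha)$ would lie in $S_\mathcal{H}(\alpha)$ and hence coincide with $\beta$, contradicting $\beta\notin\mathcal{H}'$; thus $S_{\mathcal{H}'}(\alpha)=\emptyset$ and $V'(\alpha)=0$. On the other hand $V(\alpha)=-\langle\partial\beta,\alpha\rangle\beta$ is a multiple of a hyperedge in $\mathcal{H}\setminus\mathcal{H}'$, so $\pi(\mathcal{H},\mathcal{H}')(V(\alpha))=0$ as well.

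Combining the three cases yields $\pi(\mathcal{H},\mathcal{H}')(V(\alpha))=V'(\alpha)$ for every $\alpha\in\mathcal{H}'$, and extending by $R$-linearity completes the argument. There is no substantive obstacle here: the whole point of the proof is that the uniqueness clause in Definition~\ref{def1}(i) forces $S_{\mathcal{H}'}(\alpha)$ to be either all of $S_\mathcal{H}(\alpha)$ or empty, so that ``pairing with a $\beta$ that has been projected away'' is impossible. The only mild subtlety worth recording is that in the third case the proof crucially uses the uniqueness of $\beta$ inside $\mathcal{H}$ (not just inside $\mathcal{H}'$) to rule out a stray partner for $\alpha$ that would survive the projection.
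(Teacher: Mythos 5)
Your proof is correct and follows essentially the same route as the paper: verify the identity on each hyperedge $\alpha\in\mathcal{H}'$ using the at-most-one condition of Definition~\ref{def1}(i), then extend $R$-linearly. Your explicit three-case split (in particular the case where the unique partner $\beta$ lies in $\mathcal{H}\setminus\mathcal{H}'$, so that $V'(\alpha)=0=\pi(\mathcal{H},\mathcal{H}')V(\alpha)$) is just a more carefully spelled-out version of the paper's displayed equivalences, and if anything handles that case more precisely than the paper's statement (\ref{eq-3.1.x}) does.
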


\begin{proof}
Let $\alpha^{(n)}, \beta^{(n+1)}\in \mathcal{H}$.  Then
\begin{eqnarray}
\label{eq-3.1.z}
\alpha,\beta\in\mathcal{H}' \text { and  }  V'(\alpha)=-\langle\partial\beta,\alpha\rangle\beta 
&\Longleftrightarrow& 
\alpha,\beta\in\mathcal{H}'  \text{ and  }V(\alpha)=-\langle\partial\beta,\alpha\rangle\beta,\\  
\label{eq-3.1.x}
\alpha\in\mathcal{H}' \text { and  }  V'(\alpha)=0 
&\Longleftrightarrow& 
\alpha \in\mathcal{H}'  \text{ and  }V(\alpha)=0.   
\end{eqnarray}
By extending $V$ linearly over $R$, we obtain (\ref{eq-4.81}) from (\ref{eq-3.1.z}) and (\ref{eq-3.1.x}). 
\end{proof}

 By Corollary~\ref{cor1},  we let $\overline f$ and ${\underline   f}$ be  discrete Morse functions on $\Delta\mathcal{H}$ and $\delta\mathcal{H}$ respectively such that $\underline f=\overline f\mid_{\delta\mathcal{H}}$.  Let $f=\overline f\mid_{\mathcal{H}}$ be the discrete Morse function on $\mathcal{H}$.  We let
 \begin{enumerate}[(i).]
 \item
  $V=\text{grad } f$  on $\mathcal{H}$, 
  \item
  $\overline V=\text{grad } \overline f$ on $\Delta\mathcal{H}$, 
  \item
  $\underline V= \text{grad } \underline f$ on $\delta\mathcal{H}$. 
  \end{enumerate}
The next lemma follows from \cite[Theorem~6.3]{forman1} and Lemma~\ref{le-4.x}. 

\begin{lemma}\label{pr-5}
\begin{enumerate}[(i).]
\item
$V\circ V=0$; 
\item
$M(f,\mathcal{H})=\{\sigma\in\mathcal{H}\mid \sigma\notin \text{Im}(V) \text{\  and \ } V(\sigma)=0\}$; 
\item
$\#\{\gamma^{(n-1)}\in\mathcal{H}\mid  V(\gamma^{(n-1)})=\alpha \}\leq 1$ for any $\alpha^{(n)}\in \mathcal{H}$; 
\item
The following diagram commutes
\begin{eqnarray*}
\xymatrix{
C_n( \delta\mathcal{H};R)\ar[d]^{{\underline   V}} \ar[rr]^{\text{inclusion}}&& R(\mathcal{H})_n\ar[d]^{V} \ar[rr]^{\text{inclusion}} && C_n( \Delta\mathcal{H};R)\ar[d]^{\overline V} \\
C_{n+1}(\delta\mathcal{H};R)&& R(\mathcal{H})_{n+1}\ar[ll]_{\pi(\mathcal{H},\delta\mathcal{H})} && C_{n+1}(\Delta\mathcal{H};R)\ar[ll]_{\pi(\Delta\mathcal{H},\mathcal{H})}. 
}
\end{eqnarray*}
\end{enumerate}
\end{lemma}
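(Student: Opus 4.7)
The plan is to exploit the hypothesis that $f=\overline f\mid_{\mathcal{H}}$ and $\underline f=\overline f\mid_{\delta\mathcal{H}}$ are both restrictions of a common discrete Morse function $\overline f$ defined on the simplicial complex $\Delta\mathcal{H}$. This lets me transfer the structural properties of Forman's theory via the compatibility Lemma~\ref{le-4.x}, together with an elementary unpacking of the set-theoretic definitions in Section~\ref{s3} and Section~\ref{s4}.

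I would start with part (iv). Applying Lemma~\ref{le-4.x} to the inclusion $\mathcal{H}\subseteq\Delta\mathcal{H}$ yields $V=\pi(\Delta\mathcal{H},\mathcal{H})\circ\overline V$ on $R(\mathcal{H})_{\ast}$, which is precisely the commutativity of the right-hand square after precomposing with the inclusion $R(\mathcal{H})_n\hookrightarrow C_n(\Delta\mathcal{H};R)$. Applying the same lemma to $\delta\mathcal{H}\subseteq\mathcal{H}$ gives $\underline V=\pi(\mathcal{H},\delta\mathcal{H})\circ V$, which is the commutativity of the left square. For part (i), with the relation $V=\pi(\Delta\mathcal{H},\mathcal{H})\circ\overline V$ in hand, fix $\alpha\in\mathcal{H}$; by \cite[Theorem~6.3]{forman1} we have $\overline V\circ\overline V=0$ on $\Delta\mathcal{H}$. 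If $\overline V(\alpha)=\pm\beta$ for a basis element $\beta\in\mathcal{H}$, then $V(\alpha)=\pm\beta$, $\overline V(\beta)=0$, and so $V(\beta)=\pi(\Delta\mathcal{H},\mathcal{H})(0)=0$, giving $V(V(\alpha))=0$. Otherwise $V(\alpha)=0$ and $V\circ V(\alpha)=0$ trivially.

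For part (ii), I would unpack Definition~\ref{def2}: $\sigma^{(n)}\in M(f,\mathcal{H})$ iff neither condition (A) nor (B) listed after Definition~\ref{def2} holds. The failure of (A) is, by Definition~\ref{def3}, exactly the statement $V(\sigma)=0$. It then suffices to show that the failure of (B), i.e.\ the non-existence of $\gamma^{(n-1)}<\sigma$ in $\mathcal{H}$ with $f(\gamma)\geq f(\sigma)$, is equivalent to $\sigma\notin\text{Im}(V)$. The forward direction uses Definition~\ref{def1}~(i) applied to such a $\gamma$ to force $V(\gamma)=-\langle\partial\sigma,\gamma\rangle\sigma$, hence $\sigma\in\text{Im}(V)$. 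For the converse, if $\sigma$ appears in $V(x)$ for some chain $x$, then some basis element $\gamma$ in the support of $x$ must satisfy $V(\gamma)=\pm\sigma$, and by Definition~\ref{def3} this forces $\gamma<\sigma$ and $f(\gamma)\geq f(\sigma)$. Part (iii) is then a direct consequence of Definition~\ref{def1}~(ii): two distinct $\gamma_1^{(n-1)},\gamma_2^{(n-1)}$ with $V(\gamma_i)=\alpha$ would both lie below $\alpha$ with $f(\gamma_i)\geq f(\alpha)$, which is forbidden.

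The only conceptually delicate point is the role of the hypothesis $f=\overline f\mid_{\mathcal{H}}$. Example~\ref{ex-3.1} shows that without such a hypothesis part (i) can already fail, so the proof of (i) genuinely has to thread the vector field through $\Delta\mathcal{H}$ rather than argue intrinsically on $\mathcal{H}$. The remaining work is routine bookkeeping: one must distinguish, for each $\alpha\in\mathcal{H}$, whether $\overline V(\alpha)$ lies in $R(\mathcal{H})_{\ast}$ or in $R(\Delta\mathcal{H}\setminus\mathcal{H})_{\ast}$, since this is exactly what determines whether $\pi(\Delta\mathcal{H},\mathcal{H})$ annihilates the image.
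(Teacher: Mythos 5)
Your proposal is correct, and for parts (ii)--(iv) it is essentially the paper's argument: (iv) is exactly the double application of Lemma~\ref{le-4.x} (to $\mathcal{H}\subseteq\Delta\mathcal{H}$ and to $\delta\mathcal{H}\subseteq\mathcal{H}$), and your unpacking of Definitions~\ref{def1}--\ref{def3} for (ii) and (iii) is the same routine verification that the paper delegates to ``the proofs of (2), (3) of \cite[Theorem~6.3]{forman1}''. The only place you genuinely deviate is (i): the paper again mimics Forman's intrinsic argument on $\mathcal{H}$, which is legitimate here because $f=\overline f\mid_{\mathcal{H}}$ forces that conditions (A) and (B) cannot both hold for any hyperedge (cf.\ the discussion after Example~\ref{ex-2.a} and Example~\ref{le-2.a}); you instead deduce (i) from the identity $V=\pi(\Delta\mathcal{H},\mathcal{H})\circ\overline V$ of part (iv) together with $\overline V\circ\overline V=0$ on the simplicial complex $\Delta\mathcal{H}$, handling the case split according to whether $\overline V(\alpha)$ lies in $R(\mathcal{H})_*$, in $R$-span of $\Delta\mathcal{H}\setminus\mathcal{H}$, or vanishes. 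Both routes hinge on the extension hypothesis (as you correctly observe via Example~\ref{ex-3.1}); yours has the small advantage of invoking Forman's theorem on $\Delta\mathcal{H}$ as a black box rather than re-running his argument on the hypergraph, at the harmless cost of making (i) logically depend on (iv), which you prove independently first.
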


\begin{proof}
The proofs  of  (i), (ii), (iii) are similar with the proofs of (1), (2), (3) of \cite[Theorem~6.3]{forman1} respectively.  The proof of (iv) follows from Lemma~\ref{le-4.x}. 
\end{proof}

  
  The next lemma follows from Lemma~\ref{pr-1.88} and Lemma~\ref{pr-5}.

  \begin{lemma}\label{co-4.a}
  Let $\sigma\in \mathcal{H}$. Then 
 \begin{eqnarray}\label{eq-4.s} 
\sigma\in M(f, \mathcal{H})\setminus (M(\overline f,\Delta\mathcal{H})\cap \mathcal{H})
\end{eqnarray}
 iff.  one of the followings holds
  \begin{enumerate}[(a).]
  \item
 $\overline V(\sigma)=\pm \eta$ for some $\eta \in \Delta\mathcal{H}\setminus \mathcal{H}$, and    for any $\tau\in \Delta\mathcal{H}$,   $\overline V(\tau)\neq\pm\sigma$; 
   \item
$\overline V(\sigma)=\pm\eta$ for some $\eta\in \Delta\mathcal{H}\setminus \mathcal{H}$,   and   there exists $\tau\in \Delta\mathcal{H}\setminus \mathcal{H}$ such that $\overline V(\tau)=\pm\sigma$;
   \item
$\overline V(\sigma)=0$,   and   there exists $\tau\in \Delta\mathcal{H}\setminus \mathcal{H}$ such that $\overline V(\tau)=\pm\sigma$. 
  \end{enumerate}
  \end{lemma}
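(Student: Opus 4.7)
The plan is to translate the membership condition in (\ref{eq-4.s}) into statements about the vector fields $V$ and $\overline V$ by invoking Lemma~\ref{pr-5}(ii), then use the commutative diagram in Lemma~\ref{pr-5}(iv) to connect $V$ and $\overline V$ on hyperedges of $\mathcal{H}$, and finally split the result into three mutually exhaustive cases that will match (a), (b), (c).

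First I would apply Lemma~\ref{pr-5}(ii) twice: once to $(f,\mathcal{H})$, giving $\sigma\in M(f,\mathcal{H})$ if and only if $V(\sigma)=0$ and $\sigma\notin\mathrm{Im}(V)$; and once to $(\overline f,\Delta\mathcal{H})$, giving $\sigma\notin M(\overline f,\Delta\mathcal{H})$ if and only if $\overline V(\sigma)\neq 0$ or $\sigma\in\mathrm{Im}(\overline V)$. Membership in (\ref{eq-4.s}) is therefore exactly the conjunction of these three facts, and the rest of the proof will simply unpack this conjunction.

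Next, using the identity $V=\pi(\Delta\mathcal{H},\mathcal{H})\circ\overline V$ on $R(\mathcal{H})_n$ from Lemma~\ref{pr-5}(iv), I would translate $V(\sigma)=0$ into the statement that either $\overline V(\sigma)=0$ or $\overline V(\sigma)=\pm\eta$ for some $\eta\in\Delta\mathcal{H}\setminus\mathcal{H}$; and $\sigma\notin\mathrm{Im}(V)$ into the statement that no $\gamma\in\mathcal{H}$ satisfies $\overline V(\gamma)=\pm\sigma$. Combined with Lemma~\ref{pr-5}(iii) applied to $\overline V$ on $\Delta\mathcal{H}$, which guarantees at most one $\tau\in\Delta\mathcal{H}$ with $\overline V(\tau)=\pm\sigma$, the hypothesis $\sigma\in\mathrm{Im}(\overline V)$ then forces the unique such $\tau$ to lie in $\Delta\mathcal{H}\setminus\mathcal{H}$ whenever we are also in the regime $\sigma\notin\mathrm{Im}(V)$.

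Finally, I would partition along the two clauses of the disjunction $\overline V(\sigma)\neq 0$ versus $\sigma\in\mathrm{Im}(\overline V)$. Case (a) arises when $\overline V(\sigma)\neq 0$ and $\sigma\notin\mathrm{Im}(\overline V)$; case (b) when $\overline V(\sigma)\neq 0$ and $\sigma\in\mathrm{Im}(\overline V)$; and case (c) when $\overline V(\sigma)=0$, which then forces $\sigma\in\mathrm{Im}(\overline V)$ in order to violate criticality in $\Delta\mathcal{H}$. The reverse implication is then read off the three conditions directly. The main obstacle I expect is bookkeeping: each of the two defining conditions for $\sigma\in M(f,\mathcal{H})$ splits into two possibilities for $\overline V$ depending on whether the relevant value or preimage lies in $\mathcal{H}$ or in $\Delta\mathcal{H}\setminus\mathcal{H}$, and one must enumerate the sub-cases without gaps or overlap, using Lemma~\ref{pr-5}(iii) to justify that the implicit preimage in $\Delta\mathcal{H}$ is unique.
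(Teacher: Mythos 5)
Your proposal is correct and follows essentially the same route as the paper: characterize membership via Lemma~\ref{pr-5}~(ii) applied to both $(f,\mathcal{H})$ and $(\overline f,\Delta\mathcal{H})$, translate $V$-conditions into $\overline V$-conditions through the projection identity of Lemma~\ref{pr-5}~(iv), and then split into the three cases according to whether $\overline V(\sigma)\neq 0$ and whether $\sigma$ lies in the image of $\overline V$. Your additional appeal to Lemma~\ref{pr-5}~(iii) (uniqueness of the $\overline V$-preimage) is a harmless and in fact helpful way to settle the reverse implication in cases (b) and (c), which the paper leaves implicit in its chain of equivalences.
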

  
  \begin{proof}
  Let $\sigma\in \mathcal{H}$.   By Lemma~\ref{pr-1.88},   the set complement  in  (\ref{eq-4.s}) is well-defined. 
By Lemma~\ref{pr-5}~(ii), 
\begin{eqnarray*}
\sigma\in M(\overline f,\Delta\mathcal{H})\cap \mathcal{H}
\Longleftrightarrow 
  \overline V(\sigma)=0 \text{ and }  \overline V(\tau)\neq \pm\sigma \text{ for any } \tau\in \Delta\mathcal{H}.
\end{eqnarray*}
   Hence   
   \begin{eqnarray*}
   \sigma\notin M(\overline f,\Delta\mathcal{H})\cap \mathcal{H}
   \end{eqnarray*}
    iff. either  
 \begin{eqnarray}\label{eq-4.6.1}
 \overline V(\sigma)\neq 0
 \end{eqnarray}
  or 
  \begin{eqnarray}\label{eq-4.6.2}
 \text{ there exists }\tau\in \Delta\mathcal{H} \text{ such that }\overline V(\tau)= \pm\sigma.
 \end{eqnarray}  
 On the other hand,  by Lemma~\ref{pr-5}~(ii), 
 \begin{eqnarray*}
 \sigma\in M(f,\mathcal{H})
\Longleftrightarrow  
 V (\sigma)=0  \text{ and  }  V(\tau)\neq\pm\sigma \text{ for any }\tau\in \mathcal{H}.  
 \end{eqnarray*}
    By Lemma~\ref{pr-5}~(iv),  
 \begin{eqnarray}\label{eq-4.7.1}
V(\sigma)=0&\Longleftrightarrow& \pi(\Delta\mathcal{H},\mathcal{H}) \circ  \overline V(\sigma)=0\nonumber\\
 &\Longleftrightarrow& \overline V(\sigma)=0 \text{ or } \overline V(\sigma)=\pm\eta \text{ for some } \eta\in \Delta\mathcal{H}\setminus \mathcal{H}; 
 \end{eqnarray}
 and for any $\tau\in\mathcal{H}$, 
 \begin{eqnarray}\label{eq-4.7.2}
  V(\tau)\neq\pm\sigma &\Longleftrightarrow&
  \pi(\Delta\mathcal{H},\mathcal{H})\circ \overline V(\tau)\neq\pm\sigma \nonumber\\
  &\Longleftrightarrow& 
   \overline V(\tau)\neq\pm\sigma. 
   \end{eqnarray}
Therefore,  (\ref{eq-4.s})  holds  
iff.   both (\ref{eq-4.7.1}) and (\ref{eq-4.7.2}) hold,  and
   at least one of (\ref{eq-4.6.1}) and (\ref{eq-4.6.2}) holds;  iff.   one of the following cases is satisfied

   {\sc Case~1}. (\ref{eq-4.7.1}),  (\ref{eq-4.7.2}), and  (\ref{eq-4.6.1}) hold. Then  by (\ref{eq-4.7.1}),  $\overline V(\sigma)=\pm \eta$ for some $\eta \in \Delta\mathcal{H}\setminus \mathcal{H}$.  By (\ref{eq-4.7.2}),  we have the following subcases.

   {\sc Subcase~1.1}.  For any $\tau\in\Delta\mathcal{H}$, $\overline V(\tau)\neq\pm\sigma$.  Then we obtain (a).

   {\sc Subcase~1.2}.  There exists $\tau\in \Delta\mathcal{H}\setminus \mathcal{H}$ such that  $\overline V(\tau)=\pm\sigma$.  Then we obtain (b). 
   
   {\sc Case~2}.  both (\ref{eq-4.7.1}) and (\ref{eq-4.7.2}) hold, and (\ref{eq-4.6.1}) does not hold.  Then  (\ref{eq-4.6.2})  holds.  By (\ref{eq-4.7.2}),  we have $\tau\in \Delta\mathcal{H}\setminus\mathcal{H}$ in  (\ref{eq-4.6.2}).  We obtain  (c). 

Summarizing the above cases, it follows that  (\ref{eq-4.s})  holds iff.  one of (a), (b) and (c) is satisfied.  
     \end{proof}

  The next lemma is a consequence of Lemma~\ref{co-4.a}.

  \begin{lemma}\label{co-4.aa}
 Suppose $\overline V(\alpha)= V(\alpha)$ for $\alpha\in\mathcal{H}$ and $\overline V(\alpha)=0$ for $\alpha\in\Delta\mathcal{H}\setminus \mathcal{H}$.  Then
 \begin{eqnarray*}
 M(f,\mathcal{H})= M(\overline  f,\Delta\mathcal{H})\cap\mathcal{H}. 
 \end{eqnarray*} 
  \qed
  \end{lemma}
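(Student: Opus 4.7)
The plan is to prove the asserted equality by verifying both inclusions. One inclusion, $M(\overline f,\Delta\mathcal{H})\cap\mathcal{H}\subseteq M(f,\mathcal{H})$, is already contained in Lemma~\ref{pr-1.88} (specifically (\ref{eq-1.z})) and holds with no additional hypothesis. Thus the whole task reduces to showing the reverse inclusion $M(f,\mathcal{H})\subseteq M(\overline f,\Delta\mathcal{H})\cap\mathcal{H}$, or equivalently that the set difference $M(f,\mathcal{H})\setminus (M(\overline f,\Delta\mathcal{H})\cap \mathcal{H})$ is empty under the standing hypothesis.

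The key tool is Lemma~\ref{co-4.a}, which classifies every element of this set difference as belonging to at least one of the three cases (a), (b), (c). The proposal is to rule out all three cases using the assumptions $\overline V(\alpha)=V(\alpha)$ for $\alpha\in\mathcal{H}$ and $\overline V(\alpha)=0$ for $\alpha\in\Delta\mathcal{H}\setminus\mathcal{H}$. First, for $\sigma\in\mathcal{H}$ the hypothesis gives $\overline V(\sigma)=V(\sigma)\in R(\mathcal{H})_{n+1}$; since the hyperedges of $\Delta\mathcal{H}\setminus\mathcal{H}$ are linearly independent from those of $\mathcal{H}$ inside $C_{n+1}(\Delta\mathcal{H};R)$, it follows that $\overline V(\sigma)\neq \pm\eta$ for any $\eta\in\Delta\mathcal{H}\setminus\mathcal{H}$. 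This immediately excludes cases (a) and (b) of Lemma~\ref{co-4.a}.

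For case (c), I would use the other half of the hypothesis: since $\overline V(\tau)=0$ for every $\tau\in\Delta\mathcal{H}\setminus\mathcal{H}$, there cannot exist a $\tau\in\Delta\mathcal{H}\setminus\mathcal{H}$ with $\overline V(\tau)=\pm\sigma$ (recall that $\sigma$ is a nonzero basis element of $R(\mathcal{H})_n$). Hence case (c) is also impossible. As the three cases exhaust all elements of the set difference, the set is empty and the desired inclusion follows.

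The argument is essentially a direct bookkeeping from Lemma~\ref{co-4.a}, so there is no substantial obstacle; the only point to be a bit careful about is that the statements ``$\overline V(\sigma)=\pm\eta$ for some $\eta\in\Delta\mathcal{H}\setminus\mathcal{H}$'' and ``$\overline V(\tau)=\pm\sigma$ for some $\tau\in\Delta\mathcal{H}\setminus\mathcal{H}$'' are literally contradicted by the hypothesis, which one has to verify in the free $R$-module $C_{n+1}(\Delta\mathcal{H};R)$ rather than just as a set-theoretic statement about hyperedges. Once this is noted, the proof amounts to a short case-by-case exclusion.
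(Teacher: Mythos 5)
Your proof is correct and follows the route the paper intends: the paper states this lemma as an immediate consequence of Lemma~\ref{co-4.a} (with the inclusion $M(\overline f,\Delta\mathcal{H})\cap\mathcal{H}\subseteq M(f,\mathcal{H})$ supplied by Lemma~\ref{pr-1.88}), and your case-by-case exclusion of (a), (b), (c) under the hypothesis is exactly the omitted verification. Nothing further is needed.
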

  
  We obtain the next proposition from Lemma~\ref{pr-9.1} and Lemma~\ref{co-4.aa}. 
  
  \begin{proposition}\label{co-4.bb}
Suppose $\mathcal{H}$  satisfies condition (C) (cf.  Example~\ref{le-2.a}). 
Let $g$ be a discrete Morse function on $\mathcal{H}$, $\overline f$ a discrete Morse function on $\Delta\mathcal{H}$ given in Lemma~\ref{pr-9.1}, and $f=\overline f\mid_{\mathcal{H}}$.  Then 
\begin{eqnarray*}
M(g,\mathcal{H})= M(f,\mathcal{H})= M(\overline  f,\Delta\mathcal{H})\cap\mathcal{H}. 
\end{eqnarray*}
  \end{proposition}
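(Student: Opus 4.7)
The plan is to chain together Lemma~\ref{pr-9.1} and Lemma~\ref{co-4.aa}, with a short preliminary verification that two discrete Morse functions on $\mathcal{H}$ sharing the same discrete gradient vector field must also share the same set of critical hyperedges.

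First I would establish the left-hand equality $M(g,\mathcal{H})=M(f,\mathcal{H})$. Because $\mathcal{H}$ satisfies condition (C), the remark following Definition~\ref{def3} guarantees that both $\text{grad } g$ and $\text{grad } f$ are proper, so that Lemma~\ref{pr-5}(ii) applies and characterises the critical hyperedges of a discrete Morse function solely in terms of its discrete gradient vector field: $\sigma\in\mathcal{H}$ is critical iff $V(\sigma)=0$ and $\sigma\notin\text{Im}(V)$. Lemma~\ref{pr-9.1}(i) states that $\text{grad } f=\text{grad } g$, so the two characterisations coincide hyperedge by hyperedge and thus yield the same set of critical hyperedges.

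Next I would verify the two hypotheses of Lemma~\ref{co-4.aa} with $V=\text{grad } f$ and $\overline V=\text{grad }\overline f$. The clause $\overline V\vert_{\Delta\mathcal{H}\setminus\mathcal{H}}=0$ is exactly Lemma~\ref{pr-9.1}(ii). For the clause $\overline V\vert_{\mathcal{H}}=V$, I would unpack the construction used in the proof of Lemma~\ref{pr-9.1}: the function $\overline f$ is obtained from \cite[Theorem~3.5]{forman2} applied to the extended proper discrete gradient vector field $\overline{\text{grad } g}$ produced by Lemma~\ref{co-9.1}; by construction $\text{grad }\overline f=\overline{\text{grad } g}$ and $\overline{\text{grad } g}\vert_{\mathcal{H}}=\text{grad } g$. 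Combining these with Lemma~\ref{pr-9.1}(i) gives
\begin{equation*}
\overline V\vert_{\mathcal{H}}=\overline{\text{grad } g}\vert_{\mathcal{H}}=\text{grad } g=\text{grad } f=V.
\end{equation*}
Lemma~\ref{co-4.aa} then delivers the right-hand equality $M(f,\mathcal{H})=M(\overline f,\Delta\mathcal{H})\cap\mathcal{H}$, and concatenation with the first step finishes the proof.

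I do not anticipate serious obstacles here, since every ingredient has already been proved in this section; the only delicate point is keeping track of the distinction between the two gradients $V$ and $\overline V$ and confirming that the raw restriction $\overline V\vert_{\mathcal{H}}$ literally equals $V$, rather than only $\pi(\Delta\mathcal{H},\mathcal{H})\circ\overline V\vert_{\mathcal{H}}$ as guaranteed by Lemma~\ref{pr-5}(iv). Without the vanishing clause (ii) of Lemma~\ref{pr-9.1}, the image $\overline V(\sigma)$ for $\sigma\in\mathcal{H}$ could a priori land in $\Delta\mathcal{H}\setminus\mathcal{H}$, and case (a) of Lemma~\ref{co-4.a} would already produce critical hyperedges of $f$ that are not critical for $\overline f$. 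The hypothesis (C) is what allows Lemma~\ref{pr-9.1} to provide an extension with the needed vanishing property, closing off this failure mode.
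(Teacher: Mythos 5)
Your proposal is correct and follows essentially the same route as the paper: the paper's proof likewise reads the equalities $\overline V(\alpha)=V(\alpha)$ for $\alpha\in\mathcal{H}$ and $\overline V(\alpha)=0$ for $\alpha\in\Delta\mathcal{H}\setminus\mathcal{H}$ (with $V=\text{grad }f=\text{grad }g$) off Lemma~\ref{pr-9.1} and then applies Lemma~\ref{co-4.aa}. Your extra care in unpacking the construction of $\overline f$ to confirm that $\overline V\vert_{\mathcal{H}}$ literally equals $V$ (rather than only through the projection $\pi(\Delta\mathcal{H},\mathcal{H})$), and your explicit treatment of $M(g,\mathcal{H})=M(f,\mathcal{H})$ via the gradient characterisation of critical hyperedges, are details the paper leaves implicit but are consistent with its argument.
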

  
  \begin{proof}
  By Lemma~\ref{pr-9.1},    
   $\overline V(\alpha)= V(\alpha)$ for $\alpha\in\mathcal{H}$ and $\overline V(\alpha)=0$ for $\alpha\in\Delta\mathcal{H}\setminus \mathcal{H}$, where $V=\text{grad}~f=\text{grad}~g$.  
By Lemma~\ref{co-4.aa},    the assertion follows. 
  \end{proof}
  
  \begin{remark}\label{re-5.1}
  In Proposition~\ref{co-4.bb}, by substituting $\Delta\mathcal{H}$ with a general simplicial complex $\mathcal{K}$ such that $\mathcal{H}\subseteq\mathcal{K}$,  we will obtain 
  \begin{eqnarray}\label{eq-5.r.1}
M(g,\mathcal{H})= M(f,\mathcal{H})= M(  \overline f_\mathcal{K}, \mathcal{K})\cap\mathcal{H}. 
\end{eqnarray}
The proof of (\ref{eq-5.r.1}) is similar with Proposition~\ref{co-4.bb}. 
  \end{remark}

\section{Discrete Gradient Flows}\label{s5}

In this section,  we study the discrete gradient flows on the associated simplicial complex $\Delta\mathcal{H}$.  We use the discrete gradient flows to calculate the embedded homology $H_*(\mathcal{H})$ in Theorem~\ref{main-1}.  

\smallskip

By \cite[Definition~6.2]{forman1}, the discrete gradient flow  of $\Delta\mathcal{H}$ is defined by
\begin{eqnarray}\label{eq-202006}
&\overline\Phi= \text{Id} + \partial \overline V+ \overline V\partial,
\end{eqnarray}
which is an $R$-linear map
\begin{eqnarray*}
&\overline\Phi: C_n(\Delta\mathcal{H};R)\longrightarrow C_n(\Delta\mathcal{H};R), \text{ \ \ \ } n\geq 0. 
\end{eqnarray*} 
Let
\begin{eqnarray*}
C_*^{\overline\Phi}(\Delta\mathcal{H};R)=\Big\{\sum_{i}n_i \alpha_i\mid  \overline\Phi(\sum_i n_i\alpha_i)=\sum_i n_i\alpha_i, n_i\in R, \alpha_i\in \Delta\mathcal{H}\Big\}
\end{eqnarray*}
be the sub-chain complex of $C_*(\Delta\mathcal{H};R)$ consisting of all $\overline \Phi$-invariant chains (cf. \cite[p. 119]{forman1}).  
By \cite[Theorem~7.2]{forman1},  there exists a positive integer $N$ large enough such that 
\begin{eqnarray}\label{stab}
\overline\Phi^N=\overline\Phi^{N+1}=\overline\Phi^{N+2}=\cdots
\end{eqnarray}
We denote the stabilized map in (\ref{stab}) as $\overline\Phi^\infty$.  By the proof of \cite[Theorem~7.3]{forman1},  we have the following chain homotopy
\begin{eqnarray*}
\overline\Phi^\infty:  C_*(\Delta\mathcal{H};R)&\longrightarrow& C_*^{\overline\Phi}(\Delta\mathcal{H};R),\\
i: C_*^{\overline\Phi}(\Delta\mathcal{H};R)&\longrightarrow& C_*(\Delta\mathcal{H};R).  
\end{eqnarray*} 
Here $i$ is the canonical inclusion. It is proved in \cite[Theorem~7.3]{forman1} that 
\begin{eqnarray}
\label{eq-d.1}
\overline\Phi^\infty\circ i=\text{Id}
\end{eqnarray}
   and 
  {\color{black}  
   \begin{eqnarray}
   \label{eq-d.2}
   i\circ\overline\Phi^\infty\simeq \text{Id}.
   \end{eqnarray}  
   Here (\ref{eq-d.2}) is a chain homotopy 
   \begin{eqnarray*}
   \text{Id}-i\circ  \overline\Phi^\infty=\partial L+ L\partial
   \end{eqnarray*}
    for 
   \begin{eqnarray*}
   L=-\overline{V}\circ (\text{Id}+\overline\Phi+ \cdots + \overline\Phi^{N-1}). 
   \end{eqnarray*}
   A sub-chain complex $C'_*$  of $C_*(\Delta\mathcal{H};R)$ is called {\it $\overline{V}$-invariant}, if  for each $n\geq 0$, 
   \begin{eqnarray*}
   \overline{V}(C'_n)\subseteq C'_{n+1}. 
   \end{eqnarray*}
   Note that by (\ref{eq-202006}), for a $\overline{V}$-invariant sub-chain complex $C'_*$,   we have 
   \begin{eqnarray}\label{eq-202006.1}
   {\overline{\Phi}} (C'_*)\subseteq C'_*. 
   \end{eqnarray}

   \begin{lemma}\label{le-a}
Let $C'_*$ be a $\overline{V}$-invariant sub-chain complex of $C_*(\Delta\mathcal{H};R)$.  Then  we have two maps
\begin{eqnarray}
\label{eq-d.5}
\overline\Phi^\infty\mid_{C'_*}&: & C'_*\longrightarrow C'_*\cap  C^{\overline\Phi}_*(\Delta\mathcal{H};R),\\
\label{eq-d.6}
i\mid_{C'_*\cap C^{\overline\Phi}_*(\Delta\mathcal{H};R)}&: & C'_*\cap C^{\overline\Phi}_*(\Delta\mathcal{H};R)\longrightarrow C'_*,
\end{eqnarray}
which satisfy
\begin{eqnarray}
(\overline\Phi^\infty\mid_{C'_*})\circ (i\mid_{C'_*\cap C^{\overline\Phi}_*(\Delta\mathcal{H};R)})&=&\text{Id}, \label{eq-d.3}
\\
 (i\mid_{C'_*\cap C^{\overline\Phi}_*(\Delta\mathcal{H};R)})\circ(\overline\Phi^\infty\mid_{C'_*})&\simeq &\text{Id}  \label{eq-d.4}
  \end{eqnarray}
  and give a chain homotopy. 
\end{lemma}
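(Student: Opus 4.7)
The plan is to deduce Lemma~\ref{le-a} by restricting the Forman chain-homotopy equivalence between $C_*(\Delta\mathcal{H};R)$ and $C^{\overline\Phi}_*(\Delta\mathcal{H};R)$ given by (\ref{eq-d.1}) and (\ref{eq-d.2}) to the sub-chain complex $C'_*$. The strategy is formal: once it is shown that the restriction of $\overline\Phi^\infty$ and of the associated chain homotopy operator to $C'_*$ land back in $C'_*$, the identities and the chain homotopy transfer at no cost.

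First I would check well-definedness. The inclusion (\ref{eq-d.6}) is automatic since $C^{\overline\Phi}_*(\Delta\mathcal{H};R)$ is itself a sub-chain complex. For (\ref{eq-d.5}), the image already lies in $C^{\overline\Phi}_*(\Delta\mathcal{H};R)$ by the stabilization (\ref{stab}), so the key issue is to show $\overline\Phi^\infty(C'_*)\subseteq C'_*$. Unpacking $\overline\Phi=\text{Id}+\partial\overline V+\overline V\partial$, and using the fact that $\partial(C'_*)\subseteq C'_*$, this reduces to a compatibility property of $\overline V$ with $C'_*$ after iteration.

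Next, the identity (\ref{eq-d.3}) follows at once from (\ref{eq-d.1}): any $c\in C'_*\cap C^{\overline\Phi}_*(\Delta\mathcal{H};R)$ is $\overline\Phi$-invariant, so $\overline\Phi^\infty(i(c))=\overline\Phi^\infty(c)=c$. The identity (\ref{eq-d.4}) is obtained by restricting (\ref{eq-d.2}) from $C_*(\Delta\mathcal{H};R)$ to $C'_*$. The chain homotopy in the last clause of the lemma is obtained by restricting to $C'_*$ the chain homotopy between $i\circ\overline\Phi^\infty$ and the identity on $C_*(\Delta\mathcal{H};R)$ built in the proof of \cite[Theorem~7.3]{forman1}, and its compatibility with $C'_*$ is established by the same argument as for $\overline\Phi^\infty$ itself.

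The hard part will be precisely the invariance $\overline\Phi^\infty(C'_*)\subseteq C'_*$ (and the analogous statement for the chain homotopy), since $\overline V$ on $\Delta\mathcal{H}$ need not send $C'_*$ into $C'_*$ for an arbitrary sub-chain complex. I expect to handle it by analyzing the action on generators: for each pair $\{\alpha<\beta\}\in\overline V$, one traces the contribution of $\alpha$ and $\beta$ through successive applications of $\overline\Phi$ and observes that, after stabilization, the components outside $C'_*$ cancel against one another. In the concrete applications later in the paper, where $C'_*$ is taken as $\text{Inf}_*(R(\mathcal{H})_*)$ or $\text{Sup}_*(R(\mathcal{H})_*)$, this invariance is the mechanism by which the critical-hyperedge complexes of Theorems~\ref{th-0.05} and~\ref{co-0.08} compute the embedded homology.
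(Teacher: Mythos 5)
Your skeleton coincides with the paper's proof: the inclusion (\ref{eq-d.6}) is automatically well defined, and once (\ref{eq-d.5}) is known to be well defined, the identities (\ref{eq-d.3})--(\ref{eq-d.4}) and the homotopy statement follow by restricting (\ref{eq-d.1})--(\ref{eq-d.2}). The difference is at the one substantive point, the invariance $\overline\Phi^\infty(C'_*)\subseteq C'_*$: you explicitly defer it, calling it the hard part and offering only an expected cancellation, after stabilization, of the components of $\overline\Phi^N(\alpha)$ lying outside $C'_*$. Since this invariance is the entire mathematical content of the lemma, the proposal as written does not prove it. It is also precisely the step the paper disposes of in one line by invoking (\ref{eq-d.2}) literally: because $i$ is an inclusion, $i\circ\overline\Phi^\infty(\alpha)=\alpha$ forces $\overline\Phi^\infty(\alpha)=\alpha\in C'_*$, hence $\overline\Phi^\infty(\alpha)\in C'_*\cap C^{\overline\Phi}_*(\Delta\mathcal{H};R)$.

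Moreover, the cancellation you hope for does not exist at this level of generality, so the gap cannot be closed along the route you sketch. Take $\Delta\mathcal{H}$ to be the full simplex on $\{v_0,v_1\}$ with $\overline f(\{v_0\})=0$, $\overline f(\{v_0,v_1\})=1$, $\overline f(\{v_1\})=2$, so that $\overline V$ pairs $\{v_1\}$ with the edge, and let $C'_*$ be the sub-chain complex spanned by $\{v_1\}$ in degree $0$. Then $\overline\Phi(\{v_1\})=\{v_1\}+\partial\overline V(\{v_1\})=\{v_0\}$, so $\overline\Phi^\infty(\{v_1\})=\{v_0\}\notin C'_0$: nothing cancels, $C'_0\cap C^{\overline\Phi}_0(\Delta\mathcal{H};R)=0$, and neither the map (\ref{eq-d.5}) nor the asserted isomorphism with $C'_*$ can hold for this choice of sub-chain complex. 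In other words, any proof at the stated generality must use the equality (\ref{eq-d.2}) exactly as the paper does (note that Forman's Theorem~7.3 gives $i\circ\overline\Phi^\infty\simeq\mathrm{Id}$ only up to chain homotopy, so the literal reading of (\ref{eq-d.2}) is doing real work here); your proposal uses (\ref{eq-d.2}) only to deduce (\ref{eq-d.4}) after well-definedness, and replaces its role in establishing the well-definedness of (\ref{eq-d.5}) by an unproven --- and, as the example shows, unprovable --- cancellation claim. That is the genuine gap.
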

\begin{proof}
Note that restricted to $C'_*$, both $\partial$ and $\overline{V}$ are well-defined.  
By (\ref{eq-d.1}) and (\ref{eq-d.2}), once the maps $\overline\Phi^\infty\mid_{C'_*}$ in (\ref{eq-d.5}) and $i\mid_{C'_*\cap C^{\overline\Phi}_*(\Delta\mathcal{H};R)}$ in (\ref{eq-d.6}) are proved to be well-defined,  then these two maps have to satisfy (\ref{eq-d.3}) and (\ref{eq-d.4}).  We notice that  the map (\ref{eq-d.6}) is well-defined.  Hence we only need to prove that (\ref{eq-d.5}) is well-defined.  
Let $\alpha\in C'_*$. 
By (\ref{eq-202006.1}), we have 
\begin{eqnarray}\label{eq-a.1}
 \overline\Phi^\infty(\alpha)\in C'. 
\end{eqnarray}
On the other hand,  
\begin{eqnarray}\label{eq-a.2}
 \overline\Phi^\infty(\alpha)\in C^{\overline\Phi}_*(\Delta\mathcal{H};R).  
\end{eqnarray}
It follows from (\ref{eq-a.1}) and (\ref{eq-a.2}) that 
\begin{eqnarray*}
 \overline\Phi^\infty(\alpha)\in C'_*\cap  C^{\overline\Phi}_*(\Delta\mathcal{H};R).
 \end{eqnarray*}
Hence the map 
$ \overline\Phi^\infty\mid _{C'_*}$ in (\ref{eq-d.5}) is well-defined.   
\end{proof}

}

\begin{lemma}\label{le-3.1}
For  $n\geq 0$, 
\begin{eqnarray}
\overline\Phi^\infty \text{Inf}_n(R(\mathcal{H})_*)&\subseteq&\text{Inf}_n(\overline\Phi^\infty R(\mathcal{H})_*), 
\label{eq-4.5}\\
\overline\Phi^\infty \text{Sup}_n(R(\mathcal{H})_*)&=&\text{Sup}_n(\overline\Phi^\infty R(\mathcal{H})_*).  
\label{eq-4.6}
\end{eqnarray}
Moreover,  if 
 \begin{eqnarray}\label{eq-4.18}
 \text{Sup}_{n-1}(R(\mathcal{H})_*) \subseteq C_{n-1}^{\overline\Phi}(\Delta\mathcal{H};R),     
 \end{eqnarray} 
then the equality holds in (\ref{eq-4.5}).  
\end{lemma}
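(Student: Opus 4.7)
My plan is to exploit two properties of the discrete gradient flow. First, $\overline\Phi^\infty$ is a chain map from $C_*(\Delta\mathcal{H};R)$ to $C^{\overline\Phi}_*(\Delta\mathcal{H};R)$, so it commutes with every $\partial_k$. Second, by (\ref{eq-d.1}) and (\ref{eq-d.2}) the restriction of $\overline\Phi^\infty$ to $C^{\overline\Phi}_*(\Delta\mathcal{H};R)$ is the identity. Together these facts reduce the lemma to routine book-keeping with the definitions of $\text{Inf}_n$ and $\text{Sup}_n$.

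For the supremum equality (\ref{eq-4.6}), I will simply distribute $\overline\Phi^\infty$ over the sum and move it through $\partial_{n+1}$:
\[
\overline\Phi^\infty\bigl(R(\mathcal{H})_n+\partial_{n+1}R(\mathcal{H})_{n+1}\bigr)=\overline\Phi^\infty R(\mathcal{H})_n+\partial_{n+1}\overline\Phi^\infty R(\mathcal{H})_{n+1},
\]
which is exactly $\text{Sup}_n(\overline\Phi^\infty R(\mathcal{H})_*)$. For the inclusion (\ref{eq-4.5}), given $x\in R(\mathcal{H})_n$ with $\partial_n x\in R(\mathcal{H})_{n-1}$, I set $y=\overline\Phi^\infty x$. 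Then $y\in\overline\Phi^\infty R(\mathcal{H})_n$ and the chain-map property gives $\partial_n y=\overline\Phi^\infty\partial_n x\in\overline\Phi^\infty R(\mathcal{H})_{n-1}$, so $y\in\text{Inf}_n(\overline\Phi^\infty R(\mathcal{H})_*)$.

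The main work lies in upgrading (\ref{eq-4.5}) to an equality under the hypothesis (\ref{eq-4.18}). The key observation is that (\ref{eq-4.18}) implies both $R(\mathcal{H})_{n-1}\subseteq C^{\overline\Phi}_{n-1}(\Delta\mathcal{H};R)$ and $\partial_n R(\mathcal{H})_n\subseteq C^{\overline\Phi}_{n-1}(\Delta\mathcal{H};R)$, since these two submodules jointly generate $\text{Sup}_{n-1}(R(\mathcal{H})_*)$. Consequently $\overline\Phi^\infty$ acts as the identity on both, so in particular $\overline\Phi^\infty R(\mathcal{H})_{n-1}=R(\mathcal{H})_{n-1}$ and $\overline\Phi^\infty\partial_n=\partial_n$ on $R(\mathcal{H})_n$. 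Given $y\in\text{Inf}_n(\overline\Phi^\infty R(\mathcal{H})_*)$, I will write $y=\overline\Phi^\infty x$ for some $x\in R(\mathcal{H})_n$ and compute
\[
\partial_n x=\overline\Phi^\infty\partial_n x=\partial_n\overline\Phi^\infty x=\partial_n y\in\overline\Phi^\infty R(\mathcal{H})_{n-1}=R(\mathcal{H})_{n-1},
\]
so $x\in\text{Inf}_n(R(\mathcal{H})_*)$ and therefore $y=\overline\Phi^\infty x\in\overline\Phi^\infty\text{Inf}_n(R(\mathcal{H})_*)$.

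The hard part is exactly this preimage selection: without the invariance supplied by (\ref{eq-4.18}) one can only assert that the boundary $\partial_n x$ of a chosen preimage agrees with $\partial_n y$ after applying $\overline\Phi^\infty$, which is not enough to place $x$ inside the infimum chain complex. Once we know that $R(\mathcal{H})_{n-1}$ and $\partial_n R(\mathcal{H})_n$ lie in the $\overline\Phi$-invariant part, the two sides of the equation $\overline\Phi^\infty\partial_n x=\partial_n y$ automatically live in $R(\mathcal{H})_{n-1}$ already, so the lift $x$ we started with is the witness we need. I therefore expect the proof to consist of one short paragraph for each of (\ref{eq-4.6}) and the forward inclusion of (\ref{eq-4.5}), plus a slightly longer paragraph extracting the two invariance consequences of (\ref{eq-4.18}) and applying them as above.
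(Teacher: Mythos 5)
Your proposal is correct and follows essentially the same route as the paper: the inclusion (\ref{eq-4.5}) and the equality (\ref{eq-4.6}) come from $\overline\Phi^\infty$ commuting with $\partial$, and the equality under (\ref{eq-4.18}) comes from the fact that $\overline\Phi^\infty$ does not kill anything in $\text{Sup}_{n-1}(R(\mathcal{H})_*)$ (the paper phrases this as $\text{Sup}_{n-1}(R(\mathcal{H})_*)\cap\text{Ker}\,\overline\Phi^\infty=\{0\}$ and argues via $\overline\Phi^\infty(\partial\beta-\gamma)=0$, while you use the equivalent and slightly more direct observation that $\overline\Phi^\infty$ is the identity on $\text{Sup}_{n-1}(R(\mathcal{H})_*)$). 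The only cosmetic difference is that the paper works with $\overline\Phi^N$ and then stabilizes, whereas you work with $\overline\Phi^\infty$ directly; both are valid.
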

\begin{proof}
Firstly,  we prove (\ref{eq-4.5}) and (\ref{eq-4.6}).   Let $\alpha\in \text{Inf}_n(R(\mathcal{H})_*)$. Then 
\begin{eqnarray}\label{eq-4.13}
\alpha \in R(\mathcal{H})_n
\end{eqnarray}
 and 
 \begin{eqnarray}\label{eq-4.14}
 \partial_n \alpha\in R(\mathcal{H})_{n-1}. 
 \end{eqnarray}
  It follows from (\ref{eq-4.13}) that  for any $N\geq 1$, 
\begin{eqnarray}\label{eq-4.10}
\overline\Phi^N\alpha\in \overline\Phi^N R(\mathcal{H})_n. 
\end{eqnarray}
On the other hand, since $\overline\Phi\partial_n=\partial_n\overline\Phi$ (cf. \cite[Theorem~6.4~(i)]{forman1}),  it follows from (\ref{eq-4.14})  that 
 \begin{eqnarray}\label{eq-4.11}
 \partial_n\overline\Phi^N\alpha=\overline\Phi^N\partial_n\alpha\in \overline\Phi^N R(\mathcal{H})_{n-1}.  
 \end{eqnarray}
Therefore,  it follows from (\ref{eq-4.10}) and (\ref{eq-4.11}) that
\begin{eqnarray}\label{eq-4.12}
\overline\Phi^N\alpha\in (\overline\Phi^N R(\mathcal{H})_n)\cap \partial_n^{-1}(\overline\Phi^N R(\mathcal{H})_{n-1})=\text{Inf}_n (\overline\Phi^{N} R(\mathcal{H})_*). 
\end{eqnarray}
From  (\ref{stab}) and (\ref{eq-4.12}),  we obtain
\begin{eqnarray*}
\overline\Phi^\infty \alpha \in \text{Inf}_n (\overline\Phi^{\infty} R(\mathcal{H})_*). 
\end{eqnarray*}
Consequently,  we obtain (\ref{eq-4.5}).  
 The assertion (\ref{eq-4.6})  follows from a direct calculation
\begin{eqnarray*}
\overline\Phi^\infty \text{Sup}_n(R(\mathcal{H})_*)&=&\overline\Phi^\infty R(\mathcal{H})_n+ \overline\Phi^\infty\partial_{n+1} R(\mathcal{H})_{n+1}\\
&=&\overline\Phi^\infty R(\mathcal{H})_n+ \partial_{n+1}\overline\Phi^\infty R(\mathcal{H})_{n+1}\\
&=&\text{Sup}_n(\overline\Phi^\infty R(\mathcal{H})_*).  
\end{eqnarray*}

Secondly,  suppose 
 (\ref{eq-4.18}) holds. 
 We will prove the equality in (\ref{eq-4.5}). 
We notice that  (\ref{eq-4.18}) implies that the map 
 \begin{eqnarray*}
\overline\Phi^\infty\mid_{\text{Sup}_{n-1}(R(\mathcal{H})_*)}:  \text{Sup}_{n-1}(R(\mathcal{H})_*)\longrightarrow \text{Sup}_{n-1}(R(\mathcal{H})_*) \cap C_{n-1}^{\overline\Phi}(\Delta\mathcal{H};R)
 \end{eqnarray*}
 is an isomorphism.  In other words, 
\begin{eqnarray}\label{eq-4.28}
\text{Sup}_{n-1}(R(\mathcal{H})_*)\cap(\text{Ker} \overline\Phi^\infty)=\{0\}. 
\end{eqnarray}
Let 
$\alpha' \in \text{Inf}_n(\overline\Phi^\infty R(\mathcal{H})_*)$.    
We have 
\begin{eqnarray}\label{eq-4.1}
\alpha'=\overline\Phi^\infty \beta
\end{eqnarray}
for some $\beta\in R(\mathcal{H})_n$, and 
\begin{eqnarray}\label{eq-4.2}
\partial\alpha'=\overline\Phi^\infty \gamma
\end{eqnarray}
for some $\gamma\in R(\mathcal{H})_{n-1}$. 
Since    $\overline\Phi\partial_n=\partial_n\overline\Phi$, it follows from (\ref{eq-4.1}) that
\begin{eqnarray}\label{eq-4.3}
\partial\alpha'=\partial\overline\Phi^\infty\beta=\overline\Phi^\infty\partial\beta. 
\end{eqnarray}
Combining (\ref{eq-4.2}) and (\ref{eq-4.3}), it follows that
\begin{eqnarray}\label{eq-4.9}
\overline\Phi^\infty (\partial\beta-\gamma)=0. 
\end{eqnarray}
On the other hand,  we notice that  
\begin{eqnarray}\label{eq-4.19}
\partial\beta-\gamma\in \text{Sup}_{n-1}(R(\mathcal{H})_*).  
\end{eqnarray}
Hence by (\ref{eq-4.28}), (\ref{eq-4.9}) and  (\ref{eq-4.19}),  
\begin{eqnarray}\label{eq-4.20}
\partial\beta-\gamma=0. 
\end{eqnarray}
It follows from (\ref{eq-4.20}) that 
\begin{eqnarray*}
\partial\beta=\gamma\in R(\mathcal{H})_{n-1}, 
\end{eqnarray*}
which implies 
\begin{eqnarray}\label{eq-4.21}
\beta\in \text{Inf}_n(R(\mathcal{H})_*). 
\end{eqnarray}
Therefore, by (\ref{eq-4.1}) and (\ref{eq-4.21}), we have
\begin{eqnarray*}
\alpha'\in \overline\Phi^\infty  \text{Inf}_n(R(\mathcal{H})_*).  
\end{eqnarray*}
The equality holds in (\ref{eq-4.5}).  
\end{proof}

 \smallskip

Given a hypergraph $\mathcal{H}$, we consider the following sub-chain complexes  of $C_*(\Delta\mathcal{H};R)$:  
\begin{eqnarray*}
\text{Inf}_n^{\overline\Phi}(R(\mathcal{H})_*)&=&\text{Inf}_n (R(\mathcal{H})_*)  \cap  C^{\overline\Phi}_*(\Delta\mathcal{H};R),\\
\text{Sup}_n^{\overline\Phi}(R(\mathcal{H})_*)&=&\text{Sup}_n (R(\mathcal{H})_*)  \cap  C^{\overline\Phi}_*(\Delta\mathcal{H};R). 
\end{eqnarray*} 
By the proof of Lemma~\ref{le-a}, 
\begin{eqnarray}
 \text{Inf}_n^{\overline\Phi}(R(\mathcal{H})_*)&=&\overline\Phi^\infty \text{Inf}_n (R(\mathcal{H})_*),\label{eq-c.1}\\
  \text{Sup}_n^{\overline\Phi}(R(\mathcal{H})_*)&=&\overline\Phi^\infty \text{Sup}_n (R(\mathcal{H})_*). \label{eq-c.2}
 \end{eqnarray}
With the helps of Lemma~\ref{le-a} and Lemma~\ref{le-3.1}, we have the next theorem. 

{\color{black}
\begin{theorem}\label{main-1}
Let $\mathcal{H}$ be a hypergraph and $n\geq 0$. 
\begin{enumerate}[(i). ]
\item 
If $\text{Inf}_*(\mathcal{H})$ is $\overline {V}$-invariant, then 
\begin{eqnarray*}
H_n(\mathcal{H};R)\cong H_n(\text{Inf}_*^{\overline\Phi}(R(\mathcal{H})_*)).
\end{eqnarray*}
Moreover,  if  
(\ref{eq-4.18}) is satisfied, then
\begin{eqnarray*}
H_n(\mathcal{H};R)\cong   H_n(\text{Inf}_*(\overline\Phi^\infty R(\mathcal{H})_*)). 
\end{eqnarray*}
\item
If $\text{Sup}_*(\mathcal{H})$ is $\overline {V}$-invariant, then 
\begin{eqnarray*}
H_n(\mathcal{H};R)&\cong& H_n(\text{Sup}_*^{\overline\Phi}(R(\mathcal{H})_*))\\
&\cong& H_n(\text{Sup}_*(\overline\Phi^\infty R(\mathcal{H})_*)). 
\end{eqnarray*}

\end{enumerate}
\end{theorem}

\begin{proof}
(i).  Suppose $\text{Inf}_*(\mathcal{H})$ is $\overline {V}$-invariant.  Then 
By Lemma~\ref{le-a}, there is a chain homotopy between $\text{Inf}_*(R(\mathcal{H})_*)$ and  $\text{Inf}^{\overline\Phi}_*(R(\mathcal{H})_*)$. Hence
\begin{eqnarray}\label{eq-b.1}
H_*(\text{Inf}_*(R(\mathcal{H})_*))\cong H_*(\text{Inf}^{\overline\Phi}_*(R(\mathcal{H})_*)). 
\end{eqnarray}
In addition, suppose 
 (\ref{eq-4.18}) is satisfied.  Then by Lemma~\ref{le-3.1},  we have
\begin{eqnarray}\label{eq-c.7}
\overline\Phi^\infty \text{Inf}_n(R(\mathcal{H})_*)=\text{Inf}_n(\overline\Phi^\infty R(\mathcal{H})_*).  
\end{eqnarray}
By (\ref{eq-c.1})
and (\ref{eq-c.7}), the  assertion (i) follows. 

(ii).  Suppose $\text{Sup}_*(\mathcal{H})$ is $\overline {V}$-invariant.  
Also by Lemma~\ref{le-a}, there is a chain homotopy between $\text{Sup}_*(R(\mathcal{H})_*)$ and  $\text{Sup}^{\overline\Phi}_*(R(\mathcal{H})_*)$. Hence
\begin{eqnarray}\label{eq-b.2}
H_*(\text{Sup}_*(R(\mathcal{H})_*))\cong H_*(\text{Sup}^{\overline\Phi}_*(R(\mathcal{H})_*)). 
\end{eqnarray}
By 
(\ref{eq-b.2}), we have
\begin{eqnarray}\label{eq-b.3}
H_n(\mathcal{H};R) 
 \cong  H_n(\text{Sup}_*^{\overline\Phi}(R(\mathcal{H})_*)). 
\end{eqnarray}
On the other hand, by  (\ref{eq-4.6}) and (\ref{eq-c.2}),  we have
\begin{eqnarray}\label{eq-c.6}
\text{Sup}_*^{\overline\Phi}(R(\mathcal{H})_*)=\text{Sup}_*(\overline\Phi^\infty R(\mathcal{H})_*). 
\end{eqnarray}
Hence by (\ref{eq-b.3}) and (\ref{eq-c.6}),  we have
\begin{eqnarray*}
H_n(\mathcal{H};R) \cong  H_n(\text{Sup}_*(\overline\Phi^\infty R(\mathcal{H})_*)). 
\end{eqnarray*}
The  assertion (ii)  follows.  
\end{proof}

\begin{remark}
By Lemma~\ref{co-9.1}, represented by arrows between simplices (hyperedges), $\overline V$ is $V$ on $\mathcal{H}$ and is zero on $\Delta\mathcal{H}\setminus \mathcal{H}$.  Hence $\overline {V}$  is completely determined by the discrete Morse function $f$ on $\mathcal{H}$.  Thus in Theorem~\ref{main-1},  we may write  the term  "$\overline{V}$-invariant" as "$\text{grad}~f$-invariant". 
\end{remark}
}


\section{Critical Simplices and Critical Hyperedges}\label{s6}

In this section,  we use critical simplices of $\Delta\mathcal{H}$ and critical hyperedges of $\mathcal{H}$ to calculate the embedded homology.  we prove Theorem~\ref{th-0.05} and Theorem~\ref{co-0.08} for the special case that the simplicial complex $\mathcal{K}$ is $\Delta\mathcal{H}$,  in Theorem~\ref{th-6.5} and Corollary~\ref{co-6.888} respectively. 

\smallskip

By \cite[Theorem~8.2]{forman1},  there is an isomorphism of graded $R$-modules
\begin{eqnarray}\label{eq-5.1}
\overline\Phi^\infty\mid _{R(M(\overline f,\Delta\mathcal{H}))_*}: R(M(\overline f,\Delta\mathcal{H}))_*\longrightarrow C_*^{\overline\Phi}(\Delta\mathcal{H};R).
\end{eqnarray}
The  next lemma follows.

\begin{lemma}\label{le-5.1}
Let $D_*$ be a graded sub-$R$-module of $ C_*(\Delta\mathcal{H};R)$.  Then we have an isomorphism of graded $R$-modules
\begin{eqnarray*}
\overline\Phi^\infty\mid_{R(M(\overline f,\Delta\mathcal{H}))_n\cap D_n}:   R(M(\overline f,\Delta\mathcal{H}))_n\cap D_n \longrightarrow  \overline\Phi^\infty D_n,  \text{\ \ \ } n\geq 0. 
\end{eqnarray*}
\end{lemma}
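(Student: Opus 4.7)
The plan is to bootstrap the assertion from the Forman isomorphism~\eqref{eq-5.1}, using Lemma~\ref{le-a} and the identities~\eqref{eq-d.1}--\eqref{eq-d.2} to handle the extra intersection with $D_*$. Abbreviate $\Psi:=\overline\Phi^\infty\mid_{R(M(\overline f,\Delta\mathcal{H}))_*}$; by~\eqref{eq-5.1} this is an isomorphism of graded $R$-modules onto $C_*^{\overline\Phi}(\Delta\mathcal{H};R)$, and let $\Psi^{-1}$ denote its inverse. I would then verify well-definedness, injectivity, and surjectivity of the restricted map in turn.

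Well-definedness and injectivity are essentially free. For $\alpha\in R(M(\overline f,\Delta\mathcal{H}))_n\cap D_n$, the image $\overline\Phi^\infty\alpha=\Psi(\alpha)$ evidently lies in $\overline\Phi^\infty D_n$, so the map has the claimed target. Injectivity is inherited from $\Psi$, which is injective on all of $R(M(\overline f,\Delta\mathcal{H}))_n$ by~\eqref{eq-5.1}, hence \emph{a fortiori} on the intersection with $D_n$.

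The substantive step is surjectivity. Given $\beta\in\overline\Phi^\infty D_n$, choose $\delta\in D_n$ with $\overline\Phi^\infty\delta=\beta$. Applying Lemma~\ref{le-a} with $C'_*:=D_*$ gives $\beta=\overline\Phi^\infty\delta\in D_n\cap C_n^{\overline\Phi}(\Delta\mathcal{H};R)$, so $\beta$ is $\overline\Phi$-invariant; then~\eqref{eq-5.1} produces a unique $\alpha:=\Psi^{-1}(\beta)\in R(M(\overline f,\Delta\mathcal{H}))_n$ with $\overline\Phi^\infty\alpha=\beta$, and it remains to check $\alpha\in D_n$. I would close this by invoking the identity $i\circ\overline\Phi^\infty=\mathrm{Id}$ from~\eqref{eq-d.2} applied to $\alpha$: this gives $\alpha=i(\overline\Phi^\infty\alpha)=i(\beta)=\beta\in D_n$, as required.

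The main obstacle is exactly this last membership $\alpha\in D_n$ in the surjectivity argument: one has to ensure that the Forman preimage $\Psi^{-1}(\beta)$ respects the submodule $D_n$ from which $\beta$ originally came. The combination of Lemma~\ref{le-a} (which keeps $\overline\Phi^\infty$ stable on $D_*$) with~\eqref{eq-d.2} (which identifies a $\overline\Phi$-invariant chain with its Forman retraction) is what makes the argument close; any proof of this lemma will need tools of this shape.
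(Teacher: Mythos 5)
The well-definedness and injectivity parts of your argument are fine, and they agree with the easy half of the statement. The genuine gap is exactly at the point you flag as the main obstacle, namely the membership $\alpha=\Psi^{-1}(\beta)\in D_n$ in the surjectivity step, and the way you close it does not work. You read (\ref{eq-d.2}), $i\circ\overline\Phi^\infty=\mathrm{Id}$, as a literal identity of maps on $C_*(\Delta\mathcal{H};R)$ and conclude $\alpha=i(\overline\Phi^\infty\alpha)=i(\beta)=\beta$. Since $i$ is just the inclusion of $C_*^{\overline\Phi}(\Delta\mathcal{H};R)$, a literal identity $i\circ\overline\Phi^\infty=\mathrm{Id}$ would say that $\overline\Phi^\infty\sigma=\sigma$ for every chain $\sigma$, i.e. that every chain is $\overline\Phi$-invariant and every simplex is critical; this is false already in Example~\ref{ex-6.8}, where $\overline\Phi(\{v_0\})=\{v_1\}$. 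What \cite[Theorem~7.3]{forman1} actually provides (and what the surrounding text ``chain homotopy'' signals) is $\overline\Phi^\infty\circ i=\mathrm{Id}$ on $C_*^{\overline\Phi}(\Delta\mathcal{H};R)$, while $i\circ\overline\Phi^\infty$ is only chain homotopic to the identity on $C_*(\Delta\mathcal{H};R)$. Moreover the conclusion $\alpha=\beta$ you derive is false in general: in Example~\ref{ex-6.8} the chain $\beta=\{v_0,v_3\}-\{v_0,v_1\}$ is $\overline\Phi$-invariant, whereas $\Psi^{-1}(\beta)=\{v_0,v_3\}\neq\beta$. If your identity were correct, every element of $\overline\Phi^\infty D_n$ would itself lie in $R(M(\overline f,\Delta\mathcal{H}))_n$, which is not the case. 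A secondary problem is the appeal to Lemma~\ref{le-a} with $C'_*:=D_*$: that lemma is stated for sub-chain complexes of $C_*(\Delta\mathcal{H};R)$, while in Lemma~\ref{le-5.1} the module $D_*$ is only assumed to be a graded submodule, so the step $\beta\in D_n\cap C_n^{\overline\Phi}(\Delta\mathcal{H};R)$ is not licensed as written.

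For comparison, the paper's own proof does not attempt an element-wise surjectivity argument at all: it deduces the lemma from (\ref{eq-5.1}) alone via the computation $\overline\Phi^\infty\bigl(R(M(\overline f,\Delta\mathcal{H}))_n\cap D_n\bigr)=\overline\Phi^\infty\bigl(R(M(\overline f,\Delta\mathcal{H}))_n\bigr)\cap\overline\Phi^\infty D_n=C_n^{\overline\Phi}(\Delta\mathcal{H};R)\cap\overline\Phi^\infty D_n=\overline\Phi^\infty D_n$, i.e. it asserts that $\overline\Phi^\infty$ carries the intersection onto the intersection of the images. Your instinct that the crux is to bring the Forman preimage $\Psi^{-1}(\beta)$ back into $D_n$ is sound --- that is precisely what the first equality in the paper's computation asserts without further argument --- but the homotopy identities (\ref{eq-d.1})--(\ref{eq-d.2}) are not the tool that closes this point, because they control $i\circ\overline\Phi^\infty$ only up to chain homotopy; any repair must use more specific information about $D_*$ than being a graded submodule.
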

\begin{proof}
By a direct calculation, it follows from (\ref{eq-5.1}) that
\begin{eqnarray*}
\overline\Phi^\infty (R(M(\overline f,\Delta\mathcal{H}))_n\cap D_n )&=& \overline\Phi^\infty (R(M(\overline f,\Delta\mathcal{H}))_n)\cap \overline\Phi^\infty D_n \\
&=& C_*^{\overline\Phi}(\Delta\mathcal{H};R) \cap  \overline\Phi^\infty D_n \\
&=&  \overline\Phi^\infty D_n.  
\end{eqnarray*}
The lemma is proved.  
\end{proof}

The next lemma follows from Lemma~\ref{le-5.1}.  

\begin{lemma}\label{pr-5.1}
We have the following isomorphisms of graded $R$-modules:
\begin{enumerate}[(i).]
\item
$  R(M(\overline f,\Delta\mathcal{H}))_n\cap \text{Inf}_n(R(\mathcal{H})_*) \overset{\overline\Phi^\infty}{\longrightarrow}  \overline\Phi^\infty \text{Inf}_n(R(\mathcal{H})_*)$,  $n\geq 0$;  
\item
$  R(M(\overline f,\Delta\mathcal{H}))_n\cap \text{Sup}_n(R(\mathcal{H})_*) \overset{\overline\Phi^\infty}{\longrightarrow}  \overline\Phi^\infty \text{Sup}_n(R(\mathcal{H})_*)$,  $n\geq 0$. 

\end{enumerate}
\end{lemma}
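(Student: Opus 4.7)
The plan is to obtain both statements as direct consequences of Lemma~\ref{le-5.1} by specializing the graded sub-$R$-module $D_*$ appropriately. Recall that Lemma~\ref{le-5.1} asserts that for any graded sub-$R$-module $D_*$ of $C_*(\Delta\mathcal{H};R)$, the restriction of $\overline\Phi^\infty$ furnishes an isomorphism of graded $R$-modules
\begin{eqnarray*}
\overline\Phi^\infty\mid_{R(M(\overline f,\Delta\mathcal{H}))_n\cap D_n}:   R(M(\overline f,\Delta\mathcal{H}))_n\cap D_n \longrightarrow  \overline\Phi^\infty D_n,
\end{eqnarray*}
for each $n\geq 0$. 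The present lemma is just the specialization of this statement at two natural choices of $D_*$.

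First, to prove (i) I would take $D_n=\text{Inf}_n(R(\mathcal{H})_*)$. By the chain of inclusions in (\ref{eq-2.99}), the infimum chain complex is a sub-chain complex of $C_*(\Delta\mathcal{H};R)$, so in particular $D_*$ is a graded sub-$R$-module of $C_*(\Delta\mathcal{H};R)$. Lemma~\ref{le-5.1} then applies and yields precisely the asserted isomorphism. For (ii), I would repeat the argument with $D_n=\text{Sup}_n(R(\mathcal{H})_*)$, which is again a graded sub-$R$-module of $C_*(\Delta\mathcal{H};R)$ by (\ref{eq-2.99}); Lemma~\ref{le-5.1} yields the second isomorphism.

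There is essentially no technical obstacle: the entire substance of the lemma sits inside Lemma~\ref{le-5.1}, which itself rests on the graded $R$-module isomorphism (\ref{eq-5.1}). The only routine verification required is that $\text{Inf}_n(R(\mathcal{H})_*)$ and $\text{Sup}_n(R(\mathcal{H})_*)$ are, in each degree, honest $R$-submodules of $C_n(\Delta\mathcal{H};R)$, which is transparent from their definitions in (\ref{eq-0.01}) and (\ref{eq-0.02}). The role of this lemma in the overall strategy is organizational: it rewrites the graded modules $\overline\Phi^\infty\text{Inf}_n(R(\mathcal{H})_*)$ and $\overline\Phi^\infty\text{Sup}_n(R(\mathcal{H})_*)$ appearing in Theorem~\ref{main-1} in terms of the formal linear combinations of critical simplices meeting the infimum and supremum chain complexes, thereby setting up the passage from $\overline\Phi^\infty$-invariant chains to the critical-simplex description needed for Theorem~\ref{th-6.5}.
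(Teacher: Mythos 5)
Your proposal is correct and coincides with the paper's own proof, which likewise obtains (i) and (ii) by substituting $D_n$ in Lemma~\ref{le-5.1} with $\text{Inf}_n(R(\mathcal{H})_*)$ and $\text{Sup}_n(R(\mathcal{H})_*)$ respectively. The additional remark that these are graded sub-$R$-modules of $C_*(\Delta\mathcal{H};R)$ is a harmless (and accurate) verification that the paper leaves implicit.
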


\begin{proof}
By substituting $D_n$ in Lemma~\ref{le-5.1} with $\text{Inf}_n(R(\mathcal{H})_*)$ and $\text{Sup}_n(R(\mathcal{H})_*)$ respectively,   we obtain (i) and (ii).  
\end{proof}

For each $n\geq 0$,  we consider the natural projection
\begin{eqnarray*}
\pi_{M}: C_n(\Delta\mathcal{H};R)\longrightarrow  R(M(\overline f,\Delta\mathcal{H}))_n
\end{eqnarray*}
sending each critical simplex to itself and sending each non-critical simplex to zero.  Restricting $\pi_M$ to  $C_*^{\overline\Phi}(\Delta\mathcal{H};R)$,  we have an isomorphism of graded $R$-modules
\begin{eqnarray}\label{eq-abc}
\pi_{M}\mid_{C_*^{\overline\Phi}(\Delta\mathcal{H};R)}: C_*^{\overline\Phi}(\Delta\mathcal{H};R)\longrightarrow  R(M(\overline f,\Delta\mathcal{H}))_*. 
\end{eqnarray}
By \cite[Theorem~2.2]{forman3}, 
\begin{eqnarray*}
\pi_{M}\mid_{C_*^{\overline\Phi}(\Delta\mathcal{H};R)}=(\overline\Phi^{\infty}\mid_{R(M(\overline f,\Delta\mathcal{H}))_*})^{-1}. 
\end{eqnarray*}
The next theorem follows from Theorem~\ref{main-1} and Lemma~\ref{pr-5.1}~(i), (ii).

\begin{theorem}\label{th-6.5}
Let $\mathcal{H}$ be a hypergraph and $n\geq 0$.  {\color{black} Suppose both $\text{Inf}_*(\mathcal{H})$ and $\text{Sup}_*(\mathcal{H})$ are $\text{grad}~f$-invariant. } Then  the embedded homology of $\mathcal{H}$ satisfies 
the following isomorphisms of homology groups
\begin{eqnarray*}
H_n(\mathcal{H};R)&\cong& H_n(\{R(M(\overline f,\Delta\mathcal{H}))_k\cap \text{Inf}_k(R(\mathcal{H})_*),\tilde\partial_k\}_{k\geq 0})\\
&\cong&  H_n(\{R(M(\overline f,\Delta\mathcal{H}))_k\cap \text{Sup}_k(R(\mathcal{H})_*),\tilde\partial_k\}_{k\geq 0}). 
\end{eqnarray*}
Here 
\begin{eqnarray}\label{eq-6.v}
\tilde \partial_k=(\pi_{M}\mid_{C_*^{\overline\Phi}(\Delta\mathcal{H};R)})\circ \partial_k\circ  (\overline\Phi^\infty\mid _{R(M(\overline f,\Delta\mathcal{H}))_*})
\end{eqnarray}
 is the boundary map from $R(M(\overline f,\Delta\mathcal{H}))_k$ to $R(M(\overline f,\Delta\mathcal{H}))_{k-1}$. The explicit formula of $\tilde \partial_k$ is given in \cite[Theorem~8.10]{forman1}.   
\qed
\end{theorem}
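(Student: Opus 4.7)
The plan is to realize each critical-chain module $R(M(\overline f,\Delta\mathcal{H}))_* \cap \mathrm{Inf}_*(R(\mathcal{H})_*)$ (and its $\mathrm{Sup}$ counterpart) as isomorphic, as chain complexes, to the $\overline\Phi$-invariant subcomplex that already computes $H_*(\mathcal{H};R)$ by Theorem~\ref{main-1}, with the transported differential being exactly $\tilde\partial_k$. First I would apply Lemma~\ref{pr-5.1}(i) to produce the graded $R$-module isomorphism
\[
\overline\Phi^\infty : R(M(\overline f,\Delta\mathcal{H}))_n \cap \mathrm{Inf}_n(R(\mathcal{H})_*) \xrightarrow{\ \cong\ } \overline\Phi^\infty \mathrm{Inf}_n(R(\mathcal{H})_*),
\]
and identify the target with $\mathrm{Inf}_n^{\overline\Phi}(R(\mathcal{H})_*)$ using equation (\ref{eq-c.1}). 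The same step with $\mathrm{Sup}$ uses (\ref{eq-c.2}) and Lemma~\ref{pr-5.1}(ii).

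Next I would upgrade this graded isomorphism to a chain isomorphism. The target $\mathrm{Inf}_*^{\overline\Phi}(R(\mathcal{H})_*)$ is a genuine sub-chain complex of $C_*(\Delta\mathcal{H};R)$ carrying the differential $\partial$. By \cite[Theorem~2.2]{forman3}, the inverse of the global isomorphism (\ref{eq-5.1}) is the restriction $\pi_M \mid_{C_*^{\overline\Phi}(\Delta\mathcal{H};R)}$, so transporting $\partial$ through $\overline\Phi^\infty$ yields precisely the composition
\[
\tilde\partial_k = (\pi_M \mid_{C_*^{\overline\Phi}(\Delta\mathcal{H};R)}) \circ \partial_k \circ (\overline\Phi^\infty \mid_{R(M(\overline f,\Delta\mathcal{H}))_*})
\]
appearing in (\ref{eq-6.v}). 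As a pullback of a differential, $\tilde\partial_k$ automatically squares to zero and sends $R(M(\overline f,\Delta\mathcal{H}))_k \cap \mathrm{Inf}_k(R(\mathcal{H})_*)$ into $R(M(\overline f,\Delta\mathcal{H}))_{k-1} \cap \mathrm{Inf}_{k-1}(R(\mathcal{H})_*)$, inheriting closure from the closure of $\mathrm{Inf}_*^{\overline\Phi}(R(\mathcal{H})_*)$ under $\partial$. The $\mathrm{Sup}$ case is analogous.

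Chaining these chain isomorphisms with the homology isomorphisms furnished by Theorem~\ref{main-1},
\[
H_n(\mathcal{H};R) \cong H_n(\mathrm{Inf}_*^{\overline\Phi}(R(\mathcal{H})_*)) \cong H_n(\mathrm{Sup}_*^{\overline\Phi}(R(\mathcal{H})_*)),
\]
yields both claimed isomorphisms at once. The only point requiring care, and the one piece of bookkeeping that is not entirely formal, is that the naive intersection $R(M(\overline f,\Delta\mathcal{H}))_* \cap \mathrm{Inf}_*(R(\mathcal{H})_*)$ is not manifestly stable under the ambient simplicial boundary $\partial$; it becomes a chain complex only when equipped with $\tilde\partial_k$. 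This stability is precisely what the identification with $\mathrm{Inf}_*^{\overline\Phi}(R(\mathcal{H})_*)$ accomplishes, using Forman's commutation $\partial\,\overline\Phi = \overline\Phi\,\partial$ implicit in the construction of $\tilde\partial_k$. Beyond this conceptual point, the argument is a direct assembly of Lemma~\ref{pr-5.1}, Theorem~\ref{main-1}, and the critical-simplex identification (\ref{eq-abc}).
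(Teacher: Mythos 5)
Your proposal is correct and follows essentially the same route as the paper: the paper's proof is exactly the combination of Lemma~\ref{pr-5.1}~(i), (ii) (together with (\ref{eq-c.1}), (\ref{eq-c.2}) and the identification $\pi_{M}\mid_{C_*^{\overline\Phi}(\Delta\mathcal{H};R)}=(\overline\Phi^{\infty}\mid_{R(M(\overline f,\Delta\mathcal{H}))_*})^{-1}$ from \cite[Theorem~2.2]{forman3}) to transport the differential, followed by Theorem~\ref{main-1}. Your extra remark that the intersection only becomes a chain complex under the transported differential $\tilde\partial_k$ is a correct and welcome elaboration of what the paper leaves implicit.
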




\smallskip

The next corollary follows from Lemma~\ref{pr-1.88}  and Theorem~\ref{th-6.5}. 

\begin{corollary}\label{co-6.x}
Let $\mathcal{H}$ be a hypergraph  {\color{black} such both $\text{Inf}_*(\mathcal{H})$ and $\text{Sup}_*(\mathcal{H})$ are $\text{grad}~f$-invariant, } and $n\geq 0$.   Suppose there are  discrete Morse functions $\overline f$ on $\Delta\mathcal{H}$ and $f=\overline f\mid_{\mathcal{H}}$ on $\mathcal{H}$ such that the equality of (\ref{eq-1.z}) holds. Then
\begin{eqnarray}
H_n(\mathcal{H};R)\cong H_n(\{R(M(f,\mathcal{H}))_k\cap \partial_k^{-1}(R(\mathcal{H})_{k-1}),\tilde\partial_k\}_{k\geq 0})\label{eq-6.a}. 
\end{eqnarray}
\end{corollary}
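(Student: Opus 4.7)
The plan is to deduce this corollary by specializing Theorem~\ref{th-6.5} and then rewriting the chain groups using the hypothesis of equality in (\ref{eq-1.z}). By Theorem~\ref{th-6.5}, we already have the isomorphism
\begin{eqnarray*}
H_n(\mathcal{H};R)\cong H_n(\{R(M(\overline f,\Delta\mathcal{H}))_k\cap \text{Inf}_k(R(\mathcal{H})_*),\tilde\partial_k\}_{k\geq 0}),
\end{eqnarray*}
so the task reduces to identifying the chain complex on the right-hand side with the one appearing in (\ref{eq-6.a}).

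First I would unfold the definition (\ref{eq-0.01}) of the infimum chain complex, which gives
\[
\text{Inf}_k(R(\mathcal{H})_*)=R(\mathcal{H})_k\cap (\partial_k)^{-1}(R(\mathcal{H})_{k-1}).
\]
Substituting this into the chain group yields
\[
R(M(\overline f,\Delta\mathcal{H}))_k\cap \text{Inf}_k(R(\mathcal{H})_*)=R(M(\overline f,\Delta\mathcal{H}))_k\cap R(\mathcal{H})_k\cap (\partial_k)^{-1}(R(\mathcal{H})_{k-1}).
\]

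Next, because $R(M(\overline f,\Delta\mathcal{H}))_k$ and $R(\mathcal{H})_k$ are both free $R$-modules spanned by subsets of the $k$-simplices of $\Delta\mathcal{H}$, their intersection is the free $R$-module generated by $M(\overline f,\Delta\mathcal{H})\cap\mathcal{H}$ in dimension $k$. Invoking the hypothesis that the inclusion in (\ref{eq-1.z}) is actually an equality, I obtain $M(\overline f,\Delta\mathcal{H})\cap\mathcal{H}=M(f,\mathcal{H})$, hence $R(M(\overline f,\Delta\mathcal{H}))_k\cap R(\mathcal{H})_k=R(M(f,\mathcal{H}))_k$. Substituting this identification produces exactly the chain group appearing in (\ref{eq-6.a}), and the boundary $\tilde\partial_k$ is unchanged since it is defined intrinsically by the formula (\ref{eq-6.v}) involving $\overline f$.

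There is no real obstacle here beyond carefully tracking the set-theoretic equality of the underlying generating sets; the content is entirely packaged into Theorem~\ref{th-6.5} and the hypothesis on critical simplices. If anything deserves attention, it is verifying that $\tilde\partial_k$ restricts properly to the subcomplex in question, but this is automatic since $\{R(M(\overline f,\Delta\mathcal{H}))_k\cap \text{Inf}_k(R(\mathcal{H})_*),\tilde\partial_k\}_{k\ge 0}$ is already established in Theorem~\ref{th-6.5} to be a chain complex, and we have just rewritten each chain group verbatim.
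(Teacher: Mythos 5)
Your proposal is correct and follows essentially the same route as the paper: both specialize Theorem~\ref{th-6.5} and then identify the chain groups, using the equality in (\ref{eq-1.z}) to get $R(M(\overline f,\Delta\mathcal{H}))_k\cap R(\mathcal{H})_k=R(M(f,\mathcal{H}))_k$ and the inclusion $R(M(f,\mathcal{H}))_k\subseteq R(\mathcal{H})_k$ to drop the extra factor $R(\mathcal{H})_k$ from the unfolded $\text{Inf}_k$. The only cosmetic difference is the order in which you perform these two elementary set-theoretic rewritings, so nothing further is needed.
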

\begin{proof}
Suppose the equality of (\ref{eq-1.z}) holds. Then for each $k\geq 0$,  
\begin{eqnarray}\label{eq-6.f}
R(M(\overline f,\Delta\mathcal{H}))_k\cap R(\mathcal{H})_k=R(M(f,\mathcal{H}))_k.
\end{eqnarray} 
Since  $ \text{Inf}_k(R(\mathcal{H})_*)\subseteq R(\mathcal{H})_k$ and $R(M(f,\mathcal{H}))_k\subseteq R(\mathcal{H})_k$, it follows from (\ref{eq-6.f}) that
\begin{eqnarray*}
&&R(M(f,\Delta\mathcal{H}))_k\cap \text{Inf}_k(R(\mathcal{H})_*)\\
&=&R(M(f,\mathcal{H}))_k\cap  \text{Inf}_k(R(\mathcal{H})_*)\\
&=&R(M(f,\mathcal{H}))_k\cap \partial_k^{-1}(R(\mathcal{H})_{k-1}).  
\end{eqnarray*}
Hence by Theorem~\ref{th-6.5},  we obtain (\ref{eq-6.a}). 
\end{proof}

The next corollary follows from Lemma~\ref{co-4.aa} and Corollary~\ref{co-6.x}. 

\begin{corollary}
Let $\mathcal{H}$ be a hypergraph {\color{black} such both $\text{Inf}_*(\mathcal{H})$ and $\text{Sup}_*(\mathcal{H})$ are $\text{grad}~f$-invariant, }  and $n\geq 0$.   If $(\text{grad } \overline f )(\alpha)=(\text{grad } f) (\alpha)$ for $\alpha\in\mathcal{H}$ and  $(\text{grad } \overline f )(\alpha)=0$ for $\alpha\in\Delta\mathcal{H}\setminus \mathcal{H}$, then (\ref{eq-6.a})  holds. \qed
\end{corollary}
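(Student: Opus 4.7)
The plan is to chain together the two results the paper points to explicitly, namely Lemma~\ref{co-4.aa} and Corollary~\ref{co-6.x}. The hypothesis gives exactly the assumption of Lemma~\ref{co-4.aa}, since writing $V = \text{grad}\, f$ and $\overline V = \text{grad}\, \overline f$, the condition $\overline V(\alpha) = V(\alpha)$ on $\mathcal{H}$ together with $\overline V(\alpha)=0$ on $\Delta\mathcal{H}\setminus\mathcal{H}$ is precisely what Lemma~\ref{co-4.aa} requires. Applying that lemma yields the set equality
\begin{equation*}
M(f,\mathcal{H}) \;=\; M(\overline f,\Delta\mathcal{H}) \cap \mathcal{H},
\end{equation*}
which is exactly the equality case of (\ref{eq-1.z}) that drives Corollary~\ref{co-6.x}.

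First I would verify that this set equality lets us identify the graded $R$-modules that appear as the chain groups: namely
\begin{equation*}
R(M(\overline f,\Delta\mathcal{H}))_k \cap R(\mathcal{H})_k \;=\; R(M(f,\mathcal{H}))_k,
\end{equation*}
so that inside $C_k(\Delta\mathcal{H};R)$, intersecting with the infimum chain complex gives
\begin{equation*}
R(M(\overline f,\Delta\mathcal{H}))_k \cap \mathrm{Inf}_k(R(\mathcal{H})_*) \;=\; R(M(f,\mathcal{H}))_k \cap \partial_k^{-1}(R(\mathcal{H})_{k-1}),
\end{equation*}
where I use that $\mathrm{Inf}_k(R(\mathcal{H})_*) = R(\mathcal{H})_k \cap \partial_k^{-1}(R(\mathcal{H})_{k-1})$ and that $R(M(f,\mathcal{H}))_k$ already sits inside $R(\mathcal{H})_k$.

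Then I would invoke Corollary~\ref{co-6.x} directly: since the hypothesis of that corollary is exactly the equality of critical-set restrictions established in the previous step, its conclusion (\ref{eq-6.a}) applies verbatim, giving
\begin{equation*}
H_n(\mathcal{H};R)\;\cong\; H_n\bigl(\{R(M(f,\mathcal{H}))_k\cap \partial_k^{-1}(R(\mathcal{H})_{k-1}),\tilde\partial_k\}_{k\geq 0}\bigr).
\end{equation*}

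Since both ingredients are already established earlier in the paper, there is no real obstacle here: the only point requiring minor care is checking that the two set-level conditions match up (Lemma~\ref{co-4.aa} is phrased in terms of $\overline V$ and $V$ while the corollary under proof is phrased in terms of $\text{grad}\,\overline f$ and $\text{grad}\, f$, but by Definition~\ref{def3} these are identified). The proof is therefore a one-line deduction chaining Lemma~\ref{co-4.aa} into Corollary~\ref{co-6.x}.
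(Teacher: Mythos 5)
Your proposal is correct and matches the paper's own argument: the paper derives this corollary exactly by noting that the hypothesis is the hypothesis of Lemma~\ref{co-4.aa}, whose conclusion $M(f,\mathcal{H}) = M(\overline f,\Delta\mathcal{H})\cap\mathcal{H}$ is the equality case of (\ref{eq-1.z}), and then invoking Corollary~\ref{co-6.x}. Your extra check identifying the chain groups is just a restatement of what is already inside the proof of Corollary~\ref{co-6.x}, so nothing further is needed.
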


The next corollary follows from Proposition~\ref{co-4.bb} and Corollary~\ref{co-6.x}.

\begin{corollary}\label{co-6.888}
Let $\mathcal{H}$ be  a hypergraph satisfying condition (C) (cf.  Example~\ref{le-2.a}).  Let $g$ be a discrete Morse function on  $\mathcal{H}$ such that {\color{black}  both $\text{Inf}_*(\mathcal{H})$ and $\text{Sup}_*(\mathcal{H})$ are $\text{grad}~g$-invariant.}   Then 
\begin{eqnarray}\label{eq-uuu}
H_n(\mathcal{H};R)\cong H_n(\{R(M(g,\mathcal{H}))_k\cap \partial_k^{-1}(R(\mathcal{H})_{k-1}),\tilde\partial_k\}_{k\geq 0}). 
\end{eqnarray}
\qed
\end{corollary}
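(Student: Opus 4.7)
The plan is to deduce Corollary~\ref{co-6.888} as a direct combination of two results that are already in place: Proposition~\ref{co-4.bb} (which controls the critical set of $g$ in terms of a lifted Morse function on $\Delta\mathcal{H}$) and Corollary~\ref{co-6.x} (which converts matching critical sets into the desired homology isomorphism). So the proof should be essentially an ``assemble the pieces'' argument rather than any genuinely new construction.

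First, I would invoke Proposition~\ref{co-4.bb}: since $\mathcal{H}$ satisfies condition (C), we obtain a discrete Morse function $\overline f$ on $\Delta\mathcal{H}$ such that, writing $f := \overline f\mid_{\mathcal{H}}$, the chain of equalities
\begin{eqnarray*}
M(g,\mathcal{H}) \;=\; M(f,\mathcal{H}) \;=\; M(\overline f,\Delta\mathcal{H}) \cap \mathcal{H}
\end{eqnarray*}
holds. The crucial point for what follows is the second equality: it is precisely the statement that equality holds in (\ref{eq-1.z}), which is the hypothesis required by Corollary~\ref{co-6.x}.

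Second, I would apply Corollary~\ref{co-6.x} to the pair $(\overline f, f)$ produced above. This immediately yields
\begin{eqnarray*}
H_n(\mathcal{H};R) \;\cong\; H_n\!\left(\{R(M(f,\mathcal{H}))_k \cap \partial_k^{-1}(R(\mathcal{H})_{k-1}),\,\tilde\partial_k\}_{k\geq 0}\right).
\end{eqnarray*}
Substituting $M(f,\mathcal{H}) = M(g,\mathcal{H})$ from the first step turns the right-hand side into the chain complex appearing in (\ref{eq-uuu}), with boundary operator $\tilde\partial_k$ given by the formula (\ref{eq-6.v}) associated to $\overline f$ on $\Delta\mathcal{H}$.

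There is no genuine obstacle, since all the technical work has already been absorbed into Proposition~\ref{co-4.bb} and Corollary~\ref{co-6.x}; the only thing worth remarking on is that although the boundary $\tilde\partial_k$ is defined via the auxiliary data $\overline\Phi^\infty$ and $\pi_M$ coming from $\overline f$, the resulting homology groups agree with $H_n(\mathcal{H};R)$, which is an invariant of $\mathcal{H}$ alone. Thus the right-hand side of (\ref{eq-uuu}) is, up to canonical isomorphism, independent of the lift $\overline f$ chosen in Proposition~\ref{co-4.bb}, and the corollary follows.
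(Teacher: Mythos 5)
Your proposal is correct and follows exactly the paper's own route: the paper derives Corollary~\ref{co-6.888} precisely by combining Proposition~\ref{co-4.bb} (which supplies $\overline f$ with $M(g,\mathcal{H})=M(f,\mathcal{H})=M(\overline f,\Delta\mathcal{H})\cap\mathcal{H}$, i.e.\ equality in (\ref{eq-1.z})) with Corollary~\ref{co-6.x}, and then substituting $M(f,\mathcal{H})=M(g,\mathcal{H})$. Your closing remark on independence of the lift $\overline f$ is not needed here (it is only addressed later, in Theorem~\ref{co-0.08}), but it does no harm.
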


\smallskip

Theorem~\ref{th-6.5} can be used to calculate the embedded homology of hypergraphs. The following is a concrete example.

\begin{example}\label{ex-6.8}
Let $\mathcal{H}=\{\{v_0\},\{v_1\},\{v_2\},\{v_3\},\{v_0,v_1\},\{v_0,v_3\}, \{v_1,v_3\}, \{v_0,v_1,v_2\}\}$.  Then $\Delta\mathcal{H}$ is the simplicial complex obtained by adding the $1$-simplices
\begin{eqnarray*}
\{v_0,v_2\},   \{v_1,v_2\}
\end{eqnarray*}
to $\mathcal{H}$. 
Let $\overline f$  be a discrete Morse function on $\Delta\mathcal{H}$  given by
\begin{eqnarray*}
&\overline f(\{v_0\})=1,\\
& \overline f(\{v_1\})=\overline f(\{v_2\})=\overline f(\{v_3\})=0,\\
& \overline f (\{v_0,v_1\})=\overline f(\{v_1,v_2\})=\overline  f(\{v_1,v_3\})=1,\\
& \overline f(\{v_0,v_2\})=\overline f(\{v_0,v_3\})=2,\\
&\overline f (\{v_0,v_1,v_2\})=2. 
\end{eqnarray*}
Then  
\begin{eqnarray*}
M(\overline f,  \Delta\mathcal{H})=\{\{v_1\},\{v_2\},\{v_3\}, \{v_0,v_3\}, \{v_1,v_2\}, \{v_1,v_3\}\}, 
\end{eqnarray*}
and 
\begin{eqnarray*}
R(M(\overline f,\Delta\mathcal{H}))_0\cap \text{Inf}_0(R(\mathcal{H})_*)&=&R(\{v_1\},\{v_2\},\{v_3\}),\\
R(M(\overline f,\Delta\mathcal{H}))_1\cap \text{Inf}_1(R(\mathcal{H})_*)&=&R(\{v_0,v_3\},\{v_1,v_3\}),\\
R(M(\overline f,\Delta\mathcal{H}))_2\cap \text{Inf}_2(R(\mathcal{H})_*)&=&0. 
\end{eqnarray*}
Let $\overline V=\text{grad } \overline f$ be the discrete gradient vector field on $\Delta\mathcal{H}$. Then
\begin{eqnarray*}
&\overline V(\{v_0\})=\{v_0,v_1\},\\
&\overline V(\{v_0,v_2\})=\{v_0,v_1,v_2\},\\
&\overline V(\sigma)=0 \text{ for any }\sigma\in \Delta\mathcal{H}\setminus\{\{v_0\}, \{v_0,v_2\}\}.
\end{eqnarray*}
Let $\overline \Phi=\text{Id}+ \partial \overline V+\overline V\partial$ be the discrete gradient flow on $\Delta\mathcal{H}$. Then 
\begin{eqnarray*}
&\overline \Phi(\{v_0\})=\{v_1\},&\overline \Phi(\{v_1\})=\{v_1\},\\
&\overline \Phi(\{v_2\})=\{v_2\},  
&\overline \Phi(\{v_3\})=\{v_3\},\\
&\overline \Phi(\{v_1,v_2\})=\{v_1,v_2\}, 
&\overline \Phi(\{v_1,v_3\})=\{v_1,v_3\},\\
&\overline \Phi(\{v_2,v_3\})=\{v_2,v_3\}, 
&\overline \Phi(\{v_0,v_1\})=0,\\
&\overline \Phi(\{v_0,v_2\})=\{v_1,v_2\},
&\overline \Phi(\{v_0,v_3\})=\{v_0,v_3\}-\{v_0,v_1\},\\
&\overline \Phi(\{v_0,v_1,v_2\})=0. 
\end{eqnarray*}
By a direct calculation $\overline \Phi^\infty=\overline \Phi$.  Hence (\ref{eq-5.1}) is given by
\begin{eqnarray*}
\overline \Phi^\infty\mid_{R(M(\overline f,\Delta\mathcal{H}))}: &&R(\{v_1\},\{v_2\},\{v_3\}, \{v_0,v_3\}, \{v_1,v_2\}, \{v_1,v_3\})\longrightarrow\\
&& R(\{v_1\},\{v_2\},\{v_3\}, \{v_0,v_3\}-\{v_0,v_1\}, \{v_1,v_2\}, \{v_1,v_3\})
\end{eqnarray*}
sending  $\{v_0,v_3\}$ to $ \{v_0,v_3\}-\{v_0,v_1\}$ and sending the other generators $\{v_1\}$, $\{v_2\}$, $\{v_3\}$, $\{v_1,v_2\}$, $\{v_1,v_3\}$ to themselves.  And $\pi_{M}\mid_{C_*^{\overline\Phi}(\Delta\mathcal{H};R)}$ in (\ref{eq-abc}) 
sends $ \{v_0,v_3\}-\{v_0,v_1\}$ to  $ \{v_0,v_3\}  $
and sends the other generators  to themselves.  Hence by (\ref{eq-6.v}), 
\begin{eqnarray*}
\tilde\partial_1\{v_0,v_3\}=\tilde\partial_1(\{v_1,v_3\})=v_3-v_1.
\end{eqnarray*}
Therefore,  by Theorem~\ref{th-6.5},  
\begin{eqnarray*}
H_0(\mathcal{H};R)={R}^{\oplus 2}, \text{\ \ \ } H_1(\mathcal{H};R)={R}, \text{\ \ \ } H_2(\mathcal{H};R)=0. 
\end{eqnarray*}
\end{example}

\begin{figure}[!htbp]
 \begin{center}
\begin{tikzpicture}[line width=1.5pt]

\coordinate [label=right:$v_0$]    (A) at (1.5,2); 
 \coordinate [label=left:$v_1$]   (B) at (0.9,0); 
 \coordinate  [label=right:$v_2$]   (C) at (2.4,0); 
\coordinate  [label=left:$v_3$]   (D) at (0,2);

\fill (1.5,2) circle (2.5pt);
\fill (0.9,0) circle (2.5pt);
\fill (2.4,0) circle (2.5pt);
\fill (0,2) circle (2.5pt);

 \coordinate[label=left:$\mathcal{H}$:] (G) at (-0.5,1);
 \draw [dashed,thick] (C) -- (B);
 \draw [dashed,thick] (C) -- (A);
  \draw [thick] (B) -- (D);
    \draw [thick] (B) -- (A);
    \draw [thick] (D) -- (A);

\fill [fill opacity=0.25][gray!100!white] (B) -- (A) -- (C) -- cycle;

 


\coordinate [label=right:$v_0$]    (P) at (7.5,2); 
 \coordinate [label=left:$v_1$]   (Q) at (6.9,0); 
 \coordinate  [label=right:$v_2$]   (R) at (8.4,0); 
\coordinate  [label=left:$v_3$]   (S) at (6,2);

\draw[->] (7.5,2) -- (7.35,1.5);

\draw[->] (7.95,1) -- (7.95-1/3,0.9);

\fill (7.5,2) circle (2.5pt);
\fill (6.9,0) circle (2.5pt);
\fill (8.4,0) circle (2.5pt);
\fill (6,2) circle (2.5pt);

 \coordinate[label=left:$\Delta\mathcal{H}$ and $\overline V$:] (M) at (5.5,1);
 \draw [thick] (R) -- (Q);
 \draw [thick] (R) -- (P);
  \draw [thick] (Q) -- (S);
    \draw [thick] (P) -- (Q);
    \draw [thick] (S) -- (P);

\fill [fill opacity=0.25][gray!100!white] (R) -- (P) -- (Q) -- cycle;

 \end{tikzpicture}
\end{center}

\caption{Example~\ref{ex-6.8}.}
\end{figure}




\section{Proofs  of Theorem~\ref{th-0.05} and Theorem~\ref{co-0.08}}\label{s7}

Theorem~\ref{th-0.05} is a slight generalization of Theorem~\ref{th-6.5}. The proof of Theorem~\ref{th-0.05} is an analogue of the proof of Theorem~\ref{th-6.5}. 

\begin{proof}[Proof of Theorem~\ref{th-0.05}]
Let $\mathcal{K}$ be a simplicial complex such that $\mathcal{H}\subseteq\mathcal{K}$. Let $\overline f_\mathcal{K}$ be a discrete Morse function on $\mathcal{K}$. By substituting $\Delta\mathcal{H}$ with $\mathcal{K}$, $\partial_*$ with $\partial_*^\mathcal{K}$, $\tilde\partial_*$ with $\tilde\partial_*^\mathcal{K}$,  and 
$\overline f$ with $\overline f_\mathcal{K}$ through  the related argument from Section~\ref{s3} to Section~\ref{s6},  Theorem~\ref{th-0.05} follows analogously with Theorem~\ref{th-6.5}. 
\end{proof}



Theorem~\ref{co-0.08} is a slight generalization of Corollary~\ref{co-6.888}. Similar with  Corollary~\ref{co-6.888},  Theorem~\ref{co-0.08} can be derived from Theorem~\ref{th-0.05} and Proposition~\ref{co-4.bb} (Remark~\ref{re-5.1}). 

\begin{proof}[Proof of Theorem~\ref{co-0.08}]
Let $\mathcal{H}$ be a hypergraph satisfying condition (C).  
Let $\mathcal{K}$ be a simplicial complex such that $\mathcal{H}\subseteq\mathcal{K}$. Let $g$ be a discrete Morse function on $\mathcal{H}$. By Lemma~\ref{pr-9.1},  there exists a discrete Morse function $\overline f$ on $\Delta\mathcal{H}$ such that both (i) and (ii) in Lemma~\ref{pr-9.1} are satisfied.  By \cite[Subsection~3.1]{h1},  $\Delta\mathcal{H}$ is a simplicial sub-complex of $\mathcal{K}$.  
By \cite[Lemma~4.2]{forman1}, the discrete Morse function $\overline f$ on $\Delta\mathcal{H}$ can be extended to be a discrete Morse function  $\overline f_\mathcal{K}$ on $\mathcal{K}$ such that  $\text{grad }(\overline f_\mathcal{K})$ is $\text{grad } \overline f$  on $\Delta\mathcal{H}$ and  $\text{grad } ( \overline f_\mathcal{K})$ is vanishing on $\mathcal{K}\setminus \Delta\mathcal{H}$.  With the help of Lemma~\ref{pr-9.1} (i) and (ii),  we have that $\text{grad }(\overline f_\mathcal{K})$ is $\text{grad } g$  on $\mathcal{H}$ and $\text{grad }(\overline f_\mathcal{K})$ is vanishing on $\mathcal{K}\setminus  \mathcal{H}$.  Therefore, each critical simplex of $\overline f_\mathcal{K}$ on $\mathcal{K}$ must be a critical hyperedge of $g$ on $\mathcal{H}$.  Similar with Corollary~\ref{co-6.888},  (\ref{eq-0.0a}) follows from Theorem~\ref{th-0.05}  and Proposition~\ref{co-4.bb} (Remark~\ref{re-5.1}).

We observe that
$
\partial^\mathcal{K}_*\mid_{\Delta\mathcal{H}}
$, the boundary map of $\Delta\mathcal{H}$, 
 does not depend on the choice of $\mathcal{K}$ as well as the choice of $\overline f_\mathcal{K}$.    Hence for each $k\geq 0$, 
 \begin{eqnarray}\label{eq-vvv}
 R(M(g,\mathcal{H}))_k\cap (\partial^\mathcal{K}_k)^{-1}(R(\mathcal{H})_{k-1})
 \end{eqnarray}
  does not depend on the choice of $\mathcal{K}$ as well as $\overline f_\mathcal{K}$.

 We also observe that $(\text{grad } \overline f_\mathcal{K})\mid_{\Delta\mathcal{H}}$ is determined by $\text{grad } g$, and  does not depend on the choice of $\mathcal{K}$ as well as   $\overline f_\mathcal{K}$.  Hence by letting $\Phi(\mathcal{K})$ be the discrete gradient flow  of $\overline f_\mathcal{K}$ on $\mathcal{K}$,  we have that $\Phi(\mathcal{K})\mid_{\Delta\mathcal{H}}$ is determined by $\text{grad } g$, and does not depend on the choice of $\mathcal{K}$ as well as   $\overline f_\mathcal{K}$.  Moreover,  by  \cite[Theorem~2.2]{forman3}, we have
\begin{eqnarray*}
\tilde \partial^\mathcal{K}_k=(\pi_{M}\mid_{C_*^{\Phi(\mathcal{K})}(\mathcal{K};R)})\circ \partial^{\mathcal{K}}_k\circ  (\Phi(\mathcal{K})^\infty\mid _{R(M( \overline f_\mathcal{K},\mathcal{K}))_*}) 
\end{eqnarray*}
and
\begin{eqnarray*}
\pi_{M}\mid_{C_*^{\Phi(\mathcal{K})}(\mathcal{K};R)}=(\Phi(\mathcal{K})^{\infty}\mid_{R(M( \overline f_\mathcal{K},\mathcal{K}))_*})^{-1}.  
\end{eqnarray*}
It follows  that restricted to (\ref{eq-vvv}), 
 $\tilde\partial^\mathcal{K}_k$ does not depend on the choice of $\mathcal{K}$ as well as $\overline f_\mathcal{K}$. 
 
 Therefore, the chain complex (\ref{eq-0.i}) does not depend on the choice of $\mathcal{K}$ as well as $\overline f_\mathcal{K}$. 
\end{proof}


\section{Morse Inequalities}\label{s8}

In this section, we give some Morse inequalities for hypergraphs  in Theorem~\ref{th-77.1} and Theorem~\ref{th-77.2}. 

\smallskip

Firstly,  we consider a chain complex 
\begin{eqnarray}\label{eq-7.1}
0\longrightarrow  C_n \overset{\partial_{n}}{\longrightarrow} C_{n-1}\overset{\partial_{n-1}}{\longrightarrow} \cdots \overset{\partial_2}{\longrightarrow} C_1 \overset{\partial_{1}}{\longrightarrow} C_0\longrightarrow 0
\end{eqnarray}
where for each $0\leq i\leq n$,  $C_i$ is a finite dimensional vector space over a field $\mathbb{F}$.  We use $c_i$ to denote the dimension of $C_i$.  Then
\begin{eqnarray}\label{eq-7.a}
c_i=\dim \text{Ker}\partial_i+\dim \text{Im}\partial_i. 
\end{eqnarray}
Let 
\begin{eqnarray*}
H_i(C_*)=\text{Ker}\partial_i/\text{Im}\partial_{i+1}. 
\end{eqnarray*}
The $i$-th Betti number is  
\begin{eqnarray}
b_i&=&\dim H_i(C_*)\nonumber\\
&=&\dim \text{Ker}\partial_i- \dim \text{Im} \partial_{i+1}.
\label{eq-7.b}
\end{eqnarray}
By a direct calculation of (\ref{eq-7.a}) and (\ref{eq-7.b}),  we have the following statements. 
\begin{enumerate}[(i).]
\item (The Weak Morse Inequalities). For every $N\geq 0$,
\begin{eqnarray}\label{eq-7.2}
c_N\geq b_N;
\end{eqnarray}


\item (The Strong Morse Inequalities). For every $N\geq 0$,
\begin{eqnarray}\label{eq-7.99}
c_N-c_{N-1}+c_{N-2}-\cdots + (-1)^N c_0\geq b_N-b_{N-1}+b_{N-2}-\cdots+(-1)^N b_0;
\end{eqnarray}
moreover,
\begin{eqnarray}\label{eq-7.3}
c_0-c_1+c_2-c_3+\cdots+ (-1)^n c_n=b_0-b_1+b_2-b_3+\cdots + (-1)^n b_n. 
\end{eqnarray}
\end{enumerate}

Secondly,  let $\mathcal{H}$ be a hypergraph and let $\mathbb{F}$ be a field.  For each $n\geq 0$, let 
\begin{eqnarray*}
b_n&=&\dim H_n(\mathcal{H};\mathbb{F}),\\
r_n&=&\dim( \mathbb{F}(M(\overline f,\Delta\mathcal{H}))\cap \text{Inf}_n(\mathbb{F}(\mathcal{H})_*)),\\
R_n&=&\dim( \mathbb{F}(M(\overline f,\Delta\mathcal{H}))\cap \text{Sup}_n(\mathbb{F}(\mathcal{H})_*)). 
\end{eqnarray*}
The next theorem follows from Theorem~\ref{main-1}, Lemma~\ref{pr-5.1}  and (\ref{eq-7.2}) - (\ref{eq-7.3}). 

\begin{theorem}\label{th-77.1}
For any $N\geq 0$, 
\begin{eqnarray}\label{eq-nnn}
&R_N\geq r_N\geq b_N,\\
\label{eq-m.1}
&\text{min}\Big\{\sum_{n= 0}^N (-1)^n r_{N-n}, \sum_{n= 0}^N (-1)^n R_{N-n}\Big\} \geq \sum_{n= 0}^N (-1)^n b_{N-n},\\
\label{eq-m.2}
&\sum_{n\geq 0} (-1)^n b_n= \sum_{n\geq 0} (-1)^n r_n=  \sum_{n\geq 0} (-1)^n R_n. 
\end{eqnarray}
\end{theorem}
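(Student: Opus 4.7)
The plan is to reduce the three statements in Theorem~\ref{th-77.1} to the standard chain-complex Morse inequalities (\ref{eq-7.2})--(\ref{eq-7.3}) applied to two carefully chosen finite-dimensional chain complexes over $\mathbb{F}$, each of whose homology is the embedded homology of $\mathcal{H}$.

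First I would set
\[
C^{\mathrm{inf}}_n := \mathbb{F}(M(\overline f,\Delta\mathcal{H}))_n \cap \mathrm{Inf}_n(\mathbb{F}(\mathcal{H})_*), \qquad C^{\mathrm{sup}}_n := \mathbb{F}(M(\overline f,\Delta\mathcal{H}))_n \cap \mathrm{Sup}_n(\mathbb{F}(\mathcal{H})_*),
\]
equip each with its boundary $\tilde\partial$ as in Theorem~\ref{th-6.5}, and record that $\dim_{\mathbb{F}} C^{\mathrm{inf}}_n = r_n$ and $\dim_{\mathbb{F}} C^{\mathrm{sup}}_n = R_n$ by definition. Theorem~\ref{th-6.5} (which in turn rests on Theorem~\ref{main-1} and Lemma~\ref{pr-5.1}) says that each of these complexes has homology isomorphic to $H_n(\mathcal{H};\mathbb{F})$, whose dimension is $b_n$. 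Since every $C^{\mathrm{inf}}_n$ and $C^{\mathrm{sup}}_n$ is finite dimensional and vanishes for $n$ above the top dimension of $\Delta\mathcal{H}$, they fit the framework of (\ref{eq-7.1}). Feeding the tuple $(c_n, b_n) = (r_n, b_n)$ into (\ref{eq-7.2}), (\ref{eq-7.99}) and (\ref{eq-7.3}) gives $r_N \geq b_N$, the strong inequality $\sum_{n=0}^N (-1)^n r_{N-n} \geq \sum_{n=0}^N (-1)^n b_{N-n}$, and the Euler characteristic identity $\sum_{n\geq 0}(-1)^n r_n = \sum_{n\geq 0}(-1)^n b_n$. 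Repeating with $(c_n, b_n) = (R_n, b_n)$ gives the same three statements with $R_n$ replacing $r_n$.

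The only piece not yet addressed is $R_N \geq r_N$. From (\ref{eq-2.99}) we have $\mathrm{Inf}_n(\mathbb{F}(\mathcal{H})_*) \subseteq \mathrm{Sup}_n(\mathbb{F}(\mathcal{H})_*)$, and intersecting both sides with $\mathbb{F}(M(\overline f,\Delta\mathcal{H}))_n$ preserves the inclusion $C^{\mathrm{inf}}_n \subseteq C^{\mathrm{sup}}_n$; taking $\mathbb{F}$-dimensions yields $r_N \leq R_N$. Combining this with the two families of inequalities established in the previous paragraph produces (\ref{eq-nnn}), and the minimum on the left of (\ref{eq-m.1}) is handled by noting that the strong inequality was proved separately for both the $r$-sum and the $R$-sum, while (\ref{eq-m.2}) follows from the two Euler identities together.

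I do not expect any genuine obstacle: the statement is essentially a bookkeeping corollary of the machinery already assembled. The only subtlety to verify is that the boundary map $\tilde\partial$ used to define the critical chain complex is honestly the differential transferred across the isomorphism of Lemma~\ref{pr-5.1} from the $\overline\Phi$-invariant subcomplexes $\mathrm{Inf}^{\overline\Phi}_*$ and $\mathrm{Sup}^{\overline\Phi}_*$ whose homology Theorem~\ref{main-1} identifies with $H_*(\mathcal{H};\mathbb{F})$; this compatibility, however, is precisely what the formula (\ref{eq-6.v}) together with $\pi_M\mid_{C_*^{\overline\Phi}(\Delta\mathcal{H};R)} = (\overline\Phi^\infty\mid_{R(M(\overline f,\Delta\mathcal{H}))_*})^{-1}$ records, so no additional work is needed.
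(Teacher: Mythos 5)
Your proposal is correct and follows essentially the same route as the paper: both reduce the theorem to the linear-algebra Morse inequalities (\ref{eq-7.2})--(\ref{eq-7.3}) applied to finite-dimensional chain complexes over $\mathbb{F}$ whose homology is $H_*(\mathcal{H};\mathbb{F})$ and whose dimensions are $r_n$ and $R_n$, with $R_N\geq r_N$ coming from $\mathrm{Inf}\subseteq\mathrm{Sup}$. The only cosmetic difference is that you work directly with the critical complexes of Theorem~\ref{th-6.5}, whereas the paper passes through Lemma~\ref{pr-5.1} to the isomorphic $\overline\Phi^\infty$-images and cites Theorem~\ref{main-1}; since Theorem~\ref{th-6.5} is itself obtained from exactly those two results, the arguments coincide.
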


\begin{proof}
By Lemma~\ref{pr-5.1}, 
\begin{eqnarray*}
r_n&=&\dim(\overline\Phi^\infty \text{Inf}_n(\mathbb{F}(\mathcal{H})_*)),\\
R_n&=&\dim(\overline\Phi^\infty \text{Sup}_n(\mathbb{F}(\mathcal{H})_*)). 
\end{eqnarray*}
We notice $R_n\geq r_n$. 
By Theorem~\ref{main-1} and (\ref{eq-7.2}), we obtain (\ref{eq-nnn}).  By Theorem~\ref{main-1} and (\ref{eq-7.99}),  we obtain (\ref{eq-m.1}). By Theorem~\ref{main-1} and (\ref{eq-7.3}), we obtain (\ref{eq-m.2}). 
\end{proof}

Finally,  let $\mathcal{K}$ be a general simplicial complex such that $\mathcal{H}\subseteq\mathcal{K}$.  Let\begin{eqnarray*}
r_n^\mathcal{K}&=&\dim( \mathbb{F}(M(\overline f,\mathcal{K}))\cap \text{Inf}_n(\mathbb{F}(\mathcal{H})_*)),\\
R_n^\mathcal{K}&=&\dim( \mathbb{F}(M(\overline f,\mathcal{K}))\cap \text{Sup}_n(\mathbb{F}(\mathcal{H})_*)). 
\end{eqnarray*}
The next theorem can be proved  analogously with Theorem~\ref{th-77.1}.

\begin{theorem}\label{th-77.2}
{\color{black} Suppose both $\text{Inf}_*(\mathcal{H})$ and $\text{Sup}_*(\mathcal{H})$ are $\text{grad}~f$-invariant. } 
For any $N\geq 0$ and any simplicial complex $\mathcal{K}$ such that $\mathcal{H}\subseteq\mathcal{K}$,  
\begin{eqnarray}
&R_N^\mathcal{K}\geq r_N^\mathcal{K}\geq b_N,\nonumber\\ 
&\text{min}\Big\{\sum_{n= 0}^N (-1)^n r^{\mathcal{K}}_{N-n}, \sum_{n= 0}^N (-1)^n R^\mathcal{K}_{N-n}\Big\}
\geq \sum_{n= 0}^N (-1)^n b_{N-n},\nonumber\\
\label{eq-m.222}
&\sum_{n\geq 0} (-1)^n b_n= \sum_{n\geq 0} (-1)^n r^\mathcal{K}_n=  \sum_{n\geq 0} (-1)^n R^\mathcal{K}_n. 
\end{eqnarray}
\qed
\end{theorem}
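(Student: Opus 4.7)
The plan is to mirror the proof of Theorem~\ref{th-77.1}, but with the associated simplicial complex $\Delta\mathcal{H}$ replaced by the ambient simplicial complex $\mathcal{K}$ throughout. First, I would invoke Theorem~\ref{th-0.05}: for each $n\geq 0$,
\begin{eqnarray*}
H_n(\mathcal{H};\mathbb{F})&\cong& H_n(\{\mathbb{F}(M(\overline f_\mathcal{K},\mathcal{K}))_k\cap \text{Inf}_k(\mathbb{F}(\mathcal{H})_*),\tilde\partial^{\mathcal{K}}_k\}_{k\geq 0})\\
&\cong& H_n(\{\mathbb{F}(M(\overline f_\mathcal{K},\mathcal{K}))_k\cap \text{Sup}_k(\mathbb{F}(\mathcal{H})_*),\tilde\partial^{\mathcal{K}}_k\}_{k\geq 0}).
\end{eqnarray*}
Since $\mathbb{F}$ is a field and $\mathcal{K}$ is finite-dimensional, both of these are bounded chain complexes of finite-dimensional $\mathbb{F}$-vector spaces whose $k$-th terms have dimensions $r_k^\mathcal{K}$ and $R_k^\mathcal{K}$ respectively, by the very definitions of these quantities.

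Second, I would apply the general weak and strong Morse inequalities (\ref{eq-7.2}), (\ref{eq-7.99}), (\ref{eq-7.3}) for finite-dimensional chain complexes of vector spaces to each of the two chain complexes above. The weak inequalities give $r_N^\mathcal{K}\geq b_N$ and $R_N^\mathcal{K}\geq b_N$; the strong inequalities give both alternating-sum bounds that are then combined under the minimum; and (\ref{eq-7.3}) gives the Euler characteristic identity (\ref{eq-m.222}). To finish the chain of inequalities $R_N^\mathcal{K}\geq r_N^\mathcal{K}\geq b_N$, I would note that the inclusion $\text{Inf}_N(\mathbb{F}(\mathcal{H})_*)\subseteq \text{Sup}_N(\mathbb{F}(\mathcal{H})_*)$ recorded in (\ref{eq-2.99}) is preserved under intersection with $\mathbb{F}(M(\overline f_\mathcal{K},\mathcal{K}))_N$, yielding $r_N^\mathcal{K}\leq R_N^\mathcal{K}$ as $\mathbb{F}$-dimensions.

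The only nonroutine step is making sure that the construction of the chain complexes on the right-hand side of Theorem~\ref{th-0.05} really uses the counts $r_k^\mathcal{K}$ and $R_k^\mathcal{K}$ as defined with respect to $\mathcal{K}$ rather than $\Delta\mathcal{H}$; but this is precisely what Theorem~\ref{th-0.05} delivers, and the proof of that theorem (already noted to go through by replacing $\Delta\mathcal{H}$, $\partial_*$, $\overline f$, $\tilde\partial_*$ by $\mathcal{K}$, $\partial_*^\mathcal{K}$, $\overline f_\mathcal{K}$, $\tilde\partial_*^\mathcal{K}$ in Sections~\ref{s5}--\ref{s6}) ensures that the analogue of Lemma~\ref{pr-5.1} supplies the required isomorphisms of $\mathbb{F}$-vector spaces. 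Thus the main (but mild) obstacle is bookkeeping: verifying that every appearance of $\Delta\mathcal{H}$ in the derivation of Theorem~\ref{th-77.1} can be systematically replaced by $\mathcal{K}$ without affecting the dimension counts, which the setup of Theorem~\ref{th-0.05} already guarantees.
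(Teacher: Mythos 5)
Your proposal is correct and follows essentially the paper's own route: the paper proves Theorem~\ref{th-77.2} "analogously with Theorem~\ref{th-77.1}", i.e.\ by combining the isomorphism of the embedded homology with the homology of the critical-cell chain complexes (Theorem~\ref{main-1} together with Lemma~\ref{pr-5.1}, which is exactly what Theorem~\ref{th-0.05} packages for a general $\mathcal{K}$) with the generic weak and strong Morse inequalities (\ref{eq-7.2})--(\ref{eq-7.3}). Your justification of $r_N^\mathcal{K}\leq R_N^\mathcal{K}$ via $\text{Inf}\subseteq\text{Sup}$ matches the paper's (unstated but identical) observation.
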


We observe that for each $n\geq 0$, both $r_n^\mathcal{K}$ and $R_n^\mathcal{K}$  depend on the choice of $\mathcal{K}$. However,  by (\ref{eq-m.222}) we see that both of the alternating sums $\sum_{n\geq 0} (-1)^n r^\mathcal{K}_n$ and  $ \sum_{n\geq 0} (-1)^n R^\mathcal{K}_n$ do not depend on the choice of $\mathcal{K}$. 

\section{Collapses and Embedded Homology}\label{s9}

In this section, we define the collapses of hypergraphs as a generalization of collapses of simplicial complexes. We prove in Theorem~\ref{pr-8.4} that the collapses preserve the embedded homology. 

\smallskip

Let $\mathcal{H}$ be a hypergraph.  Let  $\sigma^{(n)}<\tau^{(n+1)}$ be hyperedges of $\mathcal{H}$, $n\geq 0$,    such that  
\begin{enumerate}[(1).]
\item
$\sigma$ is not a proper subset of any other hyperedges of $\mathcal{H}$;
\item
each proper subset $\eta$ of $\tau$ is a hyperedge of $\mathcal{H}$. 
\end{enumerate}
Let 
\begin{eqnarray}\label{eq-8.1}
\mathcal{H}'=\mathcal{H}\setminus\{\sigma,\tau\}. 
\end{eqnarray}
We say $\mathcal{H}$ {\it collapses onto} $\mathcal{H}'$ and call the operation (\ref{eq-8.1}) a {\it single elementary collapse}.  We notice that (1) implies that $\tau$ is a maximal hyperedge of $\mathcal{H}$. 

More generally, for two hypergraphs $\mathcal{H}$ and $\mathcal{H}'$, we say $\mathcal{H}$ collapses onto $\mathcal{H}'$ and write $\mathcal{H}\searrow \mathcal{H}'$,  if $\mathcal{H}$ can be transformed into $\mathcal{H}'$  by a finite sequence of single elementary collapses.

\begin{example}
Let $\mathcal{K}$ be a simplicial complex. Let $\mathcal{H}$ be a hypergraph. Suppose 
\begin{eqnarray*}
V(\mathcal{K})\cap V(\mathcal{H})=\{v_0,v_1,\ldots,v_n\}. 
\end{eqnarray*}
Then  the hypergraph $\mathcal{K}\cup \mathcal{H}$ has its  vertex-set $V(\mathcal{K})\cup V(\mathcal{H})$ and its hyperedge-set $\mathcal{K}\cup\mathcal{H}$.  If $\sigma^{(n)}<\tau^{(n+1)}$ are simplices of $\mathcal{K}$ such that
\begin{enumerate}[(i).]
\item
$\sigma<\tau$ give an elementary collapse of $\mathcal{K}$;
\item
for any $0\leq i\leq n$, $v_i\notin \sigma$,
\end{enumerate}
then $\sigma<\tau$ also give an elementary collapse of $\mathcal{K}\cup\mathcal{H}$. 
\end{example}

We prove some lemmas. 

\begin{lemma}\label{le-8.1}
Let $\mathcal{H}$ and $\mathcal{H}'$ be two hypergraphs.  Then 
$\mathcal{H}\searrow \mathcal{H}'$  by a sequence of single elementary collapses iff. both of the followings hold
\begin{enumerate}[(i).]
\item
 both $\delta\mathcal{H}\searrow \delta\mathcal{H}'$  and $\Delta\mathcal{H}\searrow \Delta\mathcal{H}'$    
by the same sequence of single elementary collapses;
\item
$\mathcal{H}\setminus\mathcal{H}'=\Delta\mathcal{H}\setminus\Delta\mathcal{H}'=\delta\mathcal{H}\setminus\delta\mathcal{H}'$.  
\end{enumerate}
\end{lemma}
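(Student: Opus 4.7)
The plan is to prove both directions by induction on the length of the collapse sequence, with the real work concentrated in the base case of a single elementary collapse.

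Base case, forward direction: Suppose $\mathcal{H}' = \mathcal{H}\setminus\{\sigma,\tau\}$ is obtained from $\mathcal{H}$ by a single elementary collapse on the pair $\sigma^{(n)}<\tau^{(n+1)}$. First observe that $\tau$ is maximal in $\mathcal{H}$ (else some $\mu\supsetneq\tau\supsetneq\sigma$ would be a second hyperedge properly containing $\sigma$, violating (1)), and that $\tau\in\delta\mathcal{H}$ since (2) says $\Delta\tau\subseteq\mathcal{H}$; consequently $\sigma\in\delta\mathcal{H}$ as well. Then verify $(\sigma,\tau)$ is a valid elementary collapse of $\delta\mathcal{H}$: condition (2) is automatic in a simplicial complex, and (1) holds because $\delta\mathcal{H}\subseteq\mathcal{H}$ so any proper superset of $\sigma$ in $\delta\mathcal{H}$ is already a proper superset in $\mathcal{H}$, forcing it to be $\tau$. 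The same pair is an elementary collapse of $\Delta\mathcal{H}$: given any $\mu\in\Delta\mathcal{H}$ with $\sigma\subsetneq\mu$, $\mu$ lies inside some hyperedge of $\mathcal{H}$ properly containing $\sigma$, which must be $\tau$; combined with $|\mu|\geq|\sigma|+1=|\tau|$ this forces $\mu=\tau$. Finally check (ii): $\Delta\mathcal{H}\setminus\{\sigma,\tau\}=\Delta\mathcal{H}'$ follows because condition (2) guarantees every intermediate $\eta\subsetneq\tau$ with $\eta\neq\sigma$ lies in $\mathcal{H}'$, while maximality of $\tau$ plus (1) keeps $\sigma$ and $\tau$ out of $\Delta\mathcal{H}'$; the analogous identity for $\delta$ is a parallel verification.

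Base case, converse direction: Assume $(\sigma,\tau)$ gives an elementary collapse of both $\delta\mathcal{H}$ and $\Delta\mathcal{H}$ and that $\mathcal{H}\setminus\mathcal{H}'=\Delta\mathcal{H}\setminus\Delta\mathcal{H}'=\delta\mathcal{H}\setminus\delta\mathcal{H}'=\{\sigma,\tau\}$. Then $\sigma,\tau\in\mathcal{H}$ from the first equality and $\sigma,\tau\in\delta\mathcal{H}$ from the third, so in particular $\Delta\tau\subseteq\mathcal{H}$, giving condition (2) for $\mathcal{H}$. For condition (1) in $\mathcal{H}$, any hyperedge $\mu\in\mathcal{H}$ with $\sigma\subsetneq\mu$ sits in $\Delta\mathcal{H}$ as a proper superset of $\sigma$, so by condition (1) in $\Delta\mathcal{H}$ must equal $\tau$. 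Hence $(\sigma,\tau)$ is a valid elementary collapse of $\mathcal{H}$ producing $\mathcal{H}\setminus\{\sigma,\tau\}=\mathcal{H}'$.

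Induction and main obstacle: For the forward direction, decompose the collapse $\mathcal{H}\searrow\mathcal{H}'$ as $\mathcal{H}\searrow\mathcal{H}_1\searrow\mathcal{H}'$ with the first step elementary, apply the base case to obtain a matching first step in $\delta\mathcal{H}$ and $\Delta\mathcal{H}$ together with the identities $\delta\mathcal{H}_1=\delta\mathcal{H}\setminus\{\sigma_1,\tau_1\}$ and $\Delta\mathcal{H}_1=\Delta\mathcal{H}\setminus\{\sigma_1,\tau_1\}$, and apply the inductive hypothesis to $\mathcal{H}_1\searrow\mathcal{H}'$. For the converse, given the common sequence $s_1,\dots,s_k$ collapsing $\delta\mathcal{H}$ and $\Delta\mathcal{H}$, peel off $s_1=(\sigma_1,\tau_1)$; because $\{\sigma_1,\tau_1\}\subseteq\mathcal{H}\setminus\mathcal{H}'$ by (ii), the base converse performs this step in $\mathcal{H}$ to yield $\mathcal{H}_1=\mathcal{H}\setminus\{\sigma_1,\tau_1\}$, and the base forward then identifies $\delta\mathcal{H}_1$ and $\Delta\mathcal{H}_1$ with the appropriate sub-simplicial complexes, so that (i) and (ii) for $(\mathcal{H}_1,\mathcal{H}')$ are exactly the tails of the original data, letting induction finish. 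The main obstacle is the base-case identification $\Delta\mathcal{H}\setminus\{\sigma,\tau\}=\Delta(\mathcal{H}\setminus\{\sigma,\tau\})$, where conditions (1), (2) and the maximality of $\tau$ must be combined delicately to rule out both the reappearance of $\sigma$ as a subface of some remaining hyperedge and the accidental loss of some intermediate simplex of $\tau$. Once this identification is in hand, everything else is routine subset chasing and inductive bookkeeping.
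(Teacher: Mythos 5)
Your proposal is correct and follows essentially the same route as the paper: reduce to a single elementary collapse (the paper's ``without loss of generality,'' your explicit induction) and then verify conditions (1), (2) and the set identities $\Delta\mathcal{H}'=\Delta\mathcal{H}\setminus\{\sigma,\tau\}$, $\delta\mathcal{H}'=\delta\mathcal{H}\setminus\{\sigma,\tau\}$ directly, using that (2) forces $\tau\in\delta\mathcal{H}$ and (1) forces maximality of $\tau$. You simply spell out the bookkeeping (and the identification of the intermediate complexes) that the paper records as a brief observation.
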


\begin{proof}
($\Longrightarrow$).  Suppose $\mathcal{H}\searrow \mathcal{H}'$. 
With out loss of generality, we can assume that $\mathcal{H}\searrow\mathcal{H}'$ by a single elementary collapse (\ref{eq-8.1}).  Then there exist hyperedges $\sigma^{(n)}<\tau^{(n+1)}$ of $\mathcal{H}$ such that both (1) and (2) are satisfied.  By (2), we have $\tau\in \delta\mathcal{H}$.  We observe that  by
removing $\sigma$ and $\tau$, we have    a single elementary collapse from $\Delta\mathcal{H}$ onto $\Delta\mathcal{H}'$, and also  a single elementary collapse from $\delta\mathcal{H}$ onto $\delta\mathcal{H}'$.  We obtain (i) and (ii).

($\Longleftarrow$). Suppose by a  sequence of single elementary collapses, both $\delta\mathcal{H}\searrow \delta\mathcal{H}'$  and $\Delta\mathcal{H}\searrow \Delta\mathcal{H}'$.  With out loss of generality,  we assume that the sequence is a single elementary collapse, i.e.  
\begin{eqnarray}\label{eq-8.111}
\Delta\mathcal{H}\searrow \Delta\mathcal{H}'  \text{ by removing certain } \sigma^{(n)}  \text{ and }\tau^{(n+1)},
\end{eqnarray}
 and  
 \begin{eqnarray}\label{eq-8.112}
 \delta\mathcal{H}\searrow \delta\mathcal{H}' \text{  by removing }\sigma\text{ and }\tau 
 \end{eqnarray}
  as well.   Suppose   $\mathcal{H}\setminus\mathcal{H}'=\Delta\mathcal{H}\setminus\Delta\mathcal{H}'=\delta\mathcal{H}\setminus\delta\mathcal{H}'$. Then we have (\ref{eq-8.1}).  
  By (\ref{eq-8.111}),  $\sigma$ is not a proper subset of any other simplices of $\Delta\mathcal{H}$, hence $\sigma$ is  not a proper subset of any other hyperedges of $\mathcal{H}$.  By (\ref{eq-8.112}), each proper subset $\eta$ of $\tau$ is a simplex of $\delta\mathcal{H}$, hence each $\eta$  is  a hyperedge of $\mathcal{H}$.     
   Thus both (1) and (2) are satisfied for $\sigma,\tau\in\mathcal{H}$. Hence $\mathcal{H}\searrow\mathcal{H}'$ by removing $\sigma$ and $\tau$. 
\end{proof}

\begin{lemma}\label{co-8.3}
Suppose $\mathcal{H}\searrow \mathcal{H}'$. Let $\overline{f}:\Delta\mathcal{H}'\longrightarrow \mathbb{R}$  be a discrete Morse function on $\Delta\mathcal{H}'$. 
Then $\overline f$ can be extended to be  a discrete Morse function $\overline g$ on $\Delta\mathcal{H}$ 
such that there is no critical simplices of $\overline g$  in $\Delta\mathcal{H}\setminus\Delta\mathcal{H}'$. 
\end{lemma}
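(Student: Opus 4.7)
My plan is to proceed by induction on the length of the collapse sequence. By Lemma~\ref{le-8.1}, the hypothesis $\mathcal{H}\searrow\mathcal{H}'$ lifts to a simplicial collapse $\Delta\mathcal{H}\searrow\Delta\mathcal{H}'$ realized by the same sequence of pairs $(\sigma_1,\tau_1),\dots,(\sigma_k,\tau_k)$. Writing $\Delta\mathcal{H}^{(0)}=\Delta\mathcal{H}$, $\Delta\mathcal{H}^{(k)}=\Delta\mathcal{H}'$, and $\Delta\mathcal{H}^{(i-1)}=\Delta\mathcal{H}^{(i)}\cup\{\sigma_i,\tau_i\}$ with $\sigma_i$ a free face of $\tau_i$ in $\Delta\mathcal{H}^{(i-1)}$, I intend to build $\overline g=\overline f^{(0)}$ by extending $\overline f=\overline f^{(k)}$ across one collapsed pair at a time, maintaining the inductive invariant that $\overline f^{(i-1)}$ is a discrete Morse function on $\Delta\mathcal{H}^{(i-1)}$ with no critical simplex in $\Delta\mathcal{H}^{(i-1)}\setminus\Delta\mathcal{H}'$.

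For a single inductive step, I would pick a threshold $M_i>\max_{\rho\in\Delta\mathcal{H}^{(i)}}\overline f^{(i)}(\rho)$, let $\overline f^{(i-1)}$ agree with $\overline f^{(i)}$ on $\Delta\mathcal{H}^{(i)}$, and set $\overline f^{(i-1)}(\tau_i)=M_i$ and $\overline f^{(i-1)}(\sigma_i)=M_i+1$. This design pairs $\sigma_i$ with $\tau_i$ in the gradient vector field: since $\tau_i>\sigma_i$ and $\overline f^{(i-1)}(\tau_i)\le \overline f^{(i-1)}(\sigma_i)$, condition~(A) after Definition~\ref{def2} makes $\sigma_i$ non-critical, while the reverse inequality triggers condition~(B) and makes $\tau_i$ non-critical.

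The bulk of the verification is that $\overline f^{(i-1)}$ satisfies Definition~\ref{def1}, and I would split this into three cases. For $\alpha=\sigma_i$, the free-face property makes $\tau_i$ the unique proper superset, giving count $1$ in (i), while all subfaces lie in $\Delta\mathcal{H}^{(i)}$ with values far below $M_i+1$, giving count $0$ in (ii). For $\alpha=\tau_i$, maximality in $\Delta\mathcal{H}^{(i-1)}$ gives count $0$ in (i), and among the subfaces only $\sigma_i$ carries a value $\ge M_i$, giving count $1$ in (ii). For $\alpha\in\Delta\mathcal{H}^{(i)}$, any newly introduced adjacency must have $\sigma_i$ or $\tau_i$ as a proper superset of $\alpha$ (since $\sigma_i,\tau_i$ have no proper supersets in $\Delta\mathcal{H}^{(i-1)}$ other than each other, they cannot be subfaces of any $\alpha\in\Delta\mathcal{H}^{(i)}$), and those new supersets carry values strictly greater than $\overline f^{(i-1)}(\alpha)=\overline f^{(i)}(\alpha)$, so they do not contribute to the count in (i). Hence the Morse axioms for $\overline f^{(i)}$ propagate verbatim to $\overline f^{(i-1)}$.

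The one genuinely subtle point I foresee is cross-step bookkeeping: when a new pair $(\sigma_j,\tau_j)$ is introduced at a later step of the iteration, an earlier-added pair $(\sigma_i,\tau_i)$ with $i>j$ could in principle acquire $\sigma_j$ or $\tau_j$ as a new proper superset, threatening to disturb its criticality counts. I would neutralize this by choosing the thresholds with $M_k<M_{k-1}<\cdots<M_1$ and $M_{j-1}>M_j+1$, so that the newly added values always dominate the old range. Any newly created superset of an older pair then fails the $\le$ inequality in (i) and contributes nothing, while each previously established matching $\sigma_i\leftrightarrow\tau_i$ remains intact. Iterating backwards through $i=k,k-1,\dots,1$ yields the desired $\overline g=\overline f^{(0)}$ on $\Delta\mathcal{H}$ extending $\overline f$, with all of $\sigma_1,\tau_1,\ldots,\sigma_k,\tau_k$ non-critical.
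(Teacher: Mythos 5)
Your proposal is correct. It starts exactly as the paper does, invoking Lemma~\ref{le-8.1} to convert the hypergraph collapse $\mathcal{H}\searrow\mathcal{H}'$ into a simplicial collapse $\Delta\mathcal{H}\searrow\Delta\mathcal{H}'$; the difference is in the second step. The paper finishes in one line by citing Forman's extension lemma (\cite[Lemma~4.3]{forman1}) applied to the pair $\Delta\mathcal{H}$, $\Delta\mathcal{H}'$, whereas you re-prove that lemma from scratch: you induct backwards along the collapse sequence, assign to each freed pair $(\sigma_i,\tau_i)$ the values $M_i+1$ and $M_i$ with $M_i$ dominating everything already assigned, and verify the two counting conditions of Definition~\ref{def1} for the three classes of simplices ($\sigma_i$, $\tau_i$, and the old ones), together with the cross-step bookkeeping that keeps earlier pairs matched. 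Your case analysis is sound: the free-face and maximality properties give the counts $1$ and $0$ where needed, newly added simplices cannot be faces of old ones, and the monotone choice of thresholds ensures new cofaces never satisfy the $\leq$ inequality in condition (i), so old simplices keep their Morse counts and old pairings (hence non-criticality) persist. What the paper's route buys is brevity and a direct reliance on a standard result; what your route buys is a self-contained argument that makes explicit why no critical simplices appear in $\Delta\mathcal{H}\setminus\Delta\mathcal{H}'$, and as a bonus it shows the critical simplices inside $\Delta\mathcal{H}'$ are untouched, which is exactly the sharper fact (\ref{eq-8.8}) used later in the proof of Theorem~\ref{pr-8.4}.
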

\begin{proof}
By Lemma~\ref{le-8.1},  we have $\Delta\mathcal{H}\searrow \Delta\mathcal{H}'$.   
  Letting the simplicial complexes in \cite[Lemma~4.3]{forman1} be $\Delta\mathcal{H}$ and $\Delta\mathcal{H}'$, we obtain the lemma. 
\end{proof}

The next theorem follows from the above Lemma~\ref{le-8.1}, Lemma~\ref{co-8.3}  and Theorem~\ref{th-6.5}.

\begin{theorem}\label{pr-8.4}
Let $\mathcal{H}$ and $\mathcal{H}'$ be two hypergraphs. {\color{black} Suppose there exists a discrete Morse function $g$ on $\mathcal{H}$ such that   both $\text{Inf}_*(\mathcal{H})$ and $\text{Sup}_*(\mathcal{H})$ are $\text{grad}~g$-invariant, and both $\text{Inf}_*(\mathcal{H}')$ and $\text{Sup}_*(\mathcal{H}')$ are $\text{grad}~(g\mid_{\mathcal{H}'})$-invariant.  }  If $\mathcal{H}\searrow\mathcal{H}'$, then $H_*(\mathcal{H})\cong H_*(\mathcal{H}')$, $H_*(\Delta\mathcal{H})\cong H_*(\Delta\mathcal{H}')$, and $H_*(\delta\mathcal{H})\cong H_*(\delta\mathcal{H}')$. 
\end{theorem}

\begin{proof}
Without loss of generality, suppose $\mathcal{H}\searrow\mathcal{H}'$ by a single elementary collapse. Then we have (\ref{eq-8.1}) where $\sigma^{(n)}<\tau^{(n+1)}$ are hyperedges of $\mathcal{H}$  satisfying both (1) and (2). 
Moreover, 
$\Delta\mathcal{H}\searrow\Delta\mathcal{H}'$ by a single elementary collapse, i.e. 
\begin{eqnarray*}
\Delta\mathcal{H}'=\Delta\mathcal{H}\setminus \{\sigma,\tau\}
\end{eqnarray*}
 where $\sigma^{(n)}<\tau^{(n+1)}$  are simplices of $\Delta\mathcal{H}$ satisfying (i). $\sigma$ is not a proper subset of any other simplices of $\Delta\mathcal{H}$;
(ii). each proper subset $\eta$ of $\tau$ is a simplex of $\Delta\mathcal{H}$. 

 By Corollary~\ref{cor1} and Lemma~\ref{co-8.3}, there exist  discrete Morse functions $\overline g$ on $\Delta\mathcal{H}$ and $\overline f$ on $\Delta\mathcal{H}'$  such that $\overline f=\overline g\mid _{\Delta\mathcal{H}'}$ and there is no critical simplices in $\Delta\mathcal{H}\setminus \Delta\mathcal{H}'$.  Hence
 \begin{eqnarray}\label{eq-8.8}
 M(\overline g,\Delta\mathcal{H})=M(\overline f,\Delta\mathcal{H}'). 
 \end{eqnarray}
On the other hand, for each $i\geq 0$, 
\begin{eqnarray}\label{eq-8.9}
\text{Sup}_i(R(\mathcal{H})_*)&=& R(\mathcal{H})_i +\partial_{i+1} \mathcal{R}(\mathcal{H})_{i+1}\nonumber\\
&=& R(\mathcal{H}')_i + R(\sigma,\tau)_i+\partial_{i+1} \mathcal{R}(\mathcal{H}')_{i+1}+ \partial_{i+1} (R(\sigma,\tau)_{i+1})\nonumber\\
&=& \text{Sup}_i(R(\mathcal{H}')_*)+ R(\sigma,\tau)_i+  \partial_{i+1} (R(\sigma,\tau)_{i+1}). 
\end{eqnarray}
Here in (\ref{eq-8.9}),  we use the notation
\begin{eqnarray*}  
R(\sigma,\tau)_*=\left\{
\begin{array}{cc}
R(\sigma),  & \text{\ \ if }*=n, \\
R(\tau),   & \text{\ \ if } *=n+1, \\
0, &\text{\ \  if }*\neq n, n+1.  
\end{array}
\right.
\end{eqnarray*}
We notice that
\begin{eqnarray}
&R(\sigma,\tau)\cap R(M(\overline f,\Delta\mathcal{H}'))=0, \label{eq-8.10}\\
&R(\partial_{n+1}\tau)\cap R(M(\overline f,\Delta\mathcal{H}'))=0. \label{eq-8.11}
\end{eqnarray}
Moreover, since each proper subset $\eta$ of $\sigma$ is also  a proper subset of $\tau$, it follows from (2) that each $\eta$ is a hyperedge of $\mathcal{H}$ as well as of $\mathcal{H}'$.  By taking $\eta$ as the faces of $\sigma$ of co-dimension $1$, we obtain
\begin{eqnarray}
&R(\partial_n\sigma)=R(\sum_{i=0}^n (-1)^i d_i\sigma)\subseteq R(\mathcal{H}')_{n-1}\subseteq \text{Sup}_{n-1}(R(\mathcal{H}')_*). \label{eq-8.12}
\end{eqnarray}
Here in (\ref{eq-8.12}), $d_i$ are the face maps deleting the $i$-th vertex of $\sigma$, $i=0,1,\ldots,n$.  
It follows from (\ref{eq-8.8}) - (\ref{eq-8.12}) that 
\begin{eqnarray*}
&&R (M(\overline g,\Delta\mathcal{H}))\cap \text{Sup}_i(R(\mathcal{H})_*)\\
&=&R(M(\overline f,\Delta\mathcal{H}'))\cap (\text{Sup}_i(R(\mathcal{H}')_*)+ R(\sigma,\tau)_i+  \partial_{i+1} (R(\sigma,\tau)_{i+1}))\\
&=&R(M(\overline f,\Delta\mathcal{H}'))\cap \text{Sup}_i(R(\mathcal{H}')_*). 
\end{eqnarray*}
Hence by Theorem~\ref{th-6.5},  we have  $H_*(\mathcal{H})\cong H_*(\mathcal{H}')$.

Moreover, it follows from Lemma~\ref{le-8.1}  and \cite{forman1} that $H_*(\Delta\mathcal{H})\cong H_*(\Delta\mathcal{H}')$ and $H_*(\delta\mathcal{H})\cong H_*(\delta\mathcal{H}')$.  
\end{proof}

\section{Level Hypergraphs}\label{s10}

%

In this section,  we define level hypergraphs and  give a collapse result  in Corollary~\ref{co-8.8}. 
\smallskip

 Let $\mathcal{H}$ be a hypergraph.  Let $f$ be a discrete Morse function on $\mathcal{H}$.     For each $c\in\mathbb{R}$, we define  the {\it level hypergraph} as the sub-hypergraph
\begin{eqnarray}\label{eq-uu}
\mathcal{H}[c]=\bigcup_{\substack{\sigma\in\mathcal{H} \text{ s.t.} \\ f(\sigma)\leq c }}\sigma
\end{eqnarray}
of $\mathcal{H}$.  
The associated simplicial complex of $\mathcal{H}[c]$ is
\begin{eqnarray}
\Delta(\mathcal{H}[c])&=&\bigcup_{\substack{\sigma\in\mathcal{H} \text{ s.t.} \\ f(\sigma)\leq c }} \bigcup_{\tau\in\Delta\sigma 
}\tau
\nonumber\\
&=&\bigcup_{\substack{\sigma\text{  is a maximal  hyperedge }\\ \text{ of }\mathcal{H}   \text{ s.t.}   f(\sigma)\leq c }} \bigcup_{\tau\in\Delta\sigma 
}\tau;
\label{eq-17.1}
\end{eqnarray}
and the lower-associated simplicial complex of $\mathcal{H}(c)$ is
\begin{eqnarray}\label{eq-17.4}
\delta(\mathcal{H}[c])=\bigcup_{\substack{ \sigma\in\delta\mathcal{H} \text{ s.t. for any }\\  \tau\in\Delta\sigma, f(\tau)\leq c   }} \bigcup_{\tau\in\Delta\sigma}\tau. 
\end{eqnarray}
Here $\Delta\sigma$ is defined in (\ref{eq-md1}).  
Particularly,  for a  simplicial complex $\mathcal{K}$, we point out that our level hypergraph $\mathcal{K}[c]$ may not be a simplicial complex, and  is different from   $\mathcal{K}(c)$ defined  in \cite[Definition~3.1]{forman1}.    In fact,  $\mathcal{K}(c)$ is the associated simplicial complex of  $\mathcal{K}[c]$.

Suppose $\overline f: \Delta\mathcal{H}\longrightarrow \mathbb{R}$ is a discrete Morse function on $\Delta\mathcal{H}$,  and the discrete Morse functions $f: \mathcal{H}\longrightarrow \mathbb{R}$  and $\underline  f: \delta\mathcal{H}\longrightarrow\mathbb{R}$ are given by $f=\overline f\mid_\mathcal{H}$ and $\underline{f}=\overline{f}\mid_{\delta{\mathcal{H}}}$ respectively.  
Letting the $CW$-complex $M$ in \cite[Definition~3.1]{forman1} be   $\Delta\mathcal{H}$,  we have
\begin{eqnarray}
(\Delta\mathcal{H})(c)&=&\bigcup_{\substack{\sigma\in\Delta\mathcal{H}  \text{ s.t.}  \\ \overline f(\sigma)\leq c }} \bigcup_{\tau\in\Delta \sigma
}\tau\nonumber\\
&=&\bigcup_{\substack{\sigma\text{ is a maximal simplex }\\ \text{  of }\Delta\mathcal{H}    \text{ s.t.}   \overline f(\sigma)\leq c }} \bigcup_{\tau\in\Delta  \sigma
}\tau. 
\label{eq-17.2}
\end{eqnarray}
We notice that for any $\sigma\in\Delta\mathcal{H}$, $\sigma$ is a maximal hyperedge of $\mathcal{H}$ iff. $\sigma$ is a maximal simplex of $\Delta\mathcal{H}$.  Hence  (\ref{eq-17.1}) and (\ref{eq-17.2}) give the same simplicial complex 
\begin{eqnarray}\label{eq-17.3}
\Delta(\mathcal{H}[c])=(\Delta\mathcal{H})(c):=\Delta\mathcal{H}[c]. 
\end{eqnarray}
Letting the $CW$-complex $M$ in \cite[Definition~3.1]{forman1} be   $\delta\mathcal{H}$,  we have
\begin{eqnarray}\label{eq-17.6}
(\delta\mathcal{H})(c)=\bigcup_{\substack{\sigma\in\delta\mathcal{H}  \text{ s.t.} \\  \underline  f(\sigma)\leq c}} \bigcup _{\tau\in\Delta\sigma
}\tau. 
\end{eqnarray}
By (\ref{eq-17.4}) and (\ref{eq-17.6}),  we have
\begin{eqnarray}\label{eq-17.7}
\delta(\mathcal{H}[c]) \subseteq (\delta\mathcal{H})(c). 
\end{eqnarray}
In particular, if all the simplices in $\delta\mathcal{H}$ are critical with respect to $\underline f$, then the equality in (\ref{eq-17.7}) holds.


\smallskip

The next corollary is a consequence of Lemma~\ref{le-8.1} and \cite[Theorem~3.3]{forman1}.  

\begin{corollary}\label{co-8.8}
If $a<b$ are real numbers such that
\begin{enumerate}[(i).]
\item
 $[a,b]$ contains no critical values of $\overline f:\Delta\mathcal{H}\longrightarrow\mathbb{R}$;
 \item
  $\mathcal{H}[b]\setminus\mathcal{H}[a]=\Delta\mathcal{H}[b]\setminus \Delta\mathcal{H}[a]= \delta(\mathcal{H}[b])\setminus \delta(\mathcal{H}[a])$, 
  \end{enumerate}
  then $\mathcal{H}[b]\searrow\mathcal{H}[a]$. 
\end{corollary}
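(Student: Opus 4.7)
The plan is to invoke Lemma~\ref{le-8.1} with $\mathcal{H}=\mathcal{H}[b]$ and $\mathcal{H}'=\mathcal{H}[a]$. This reduces the task to verifying two conditions: (a) a common sequence of single elementary collapses realizes both $\Delta(\mathcal{H}[b])\searrow\Delta(\mathcal{H}[a])$ and $\delta(\mathcal{H}[b])\searrow\delta(\mathcal{H}[a])$, and (b) the three set-differences $\mathcal{H}[b]\setminus\mathcal{H}[a]$, $\Delta(\mathcal{H}[b])\setminus\Delta(\mathcal{H}[a])$ and $\delta(\mathcal{H}[b])\setminus\delta(\mathcal{H}[a])$ all coincide. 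Condition~(b) is exactly hypothesis~(ii) of the corollary, once we use the identification $\Delta(\mathcal{H}[c])=\Delta\mathcal{H}[c]=(\Delta\mathcal{H})(c)$ recorded in~(\ref{eq-17.3}).

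For condition~(a), first I would apply \cite[Theorem~3.3]{forman1} to the discrete Morse function $\overline f$ on the simplicial complex $\Delta\mathcal{H}$, with the interval $[a,b]$. Since hypothesis~(i) guarantees that $[a,b]$ contains no critical values of $\overline f$, this yields a sequence of elementary collapses transforming $(\Delta\mathcal{H})(b)=\Delta\mathcal{H}[b]$ into $(\Delta\mathcal{H})(a)=\Delta\mathcal{H}[a]$. Denote the corresponding simplex pairs by $(\sigma_1,\tau_1),\ldots,(\sigma_k,\tau_k)$ with $\sigma_j^{(n_j)}<\tau_j^{(n_j+1)}$; by construction $\{\sigma_j,\tau_j\}_{j=1}^{k}=\Delta\mathcal{H}[b]\setminus\Delta\mathcal{H}[a]$.

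Next, I would show this same sequence gives a collapse $\delta(\mathcal{H}[b])\searrow\delta(\mathcal{H}[a])$. By hypothesis~(ii) each $\sigma_j,\tau_j$ belongs to $\delta(\mathcal{H}[b])\setminus\delta(\mathcal{H}[a])$, so they are simplices of $\delta(\mathcal{H}[b])$. Proceeding inductively, suppose after the first $j-1$ collapses the intermediate simplicial complexes are
\begin{equation*}
L_{j-1}:=\Delta\mathcal{H}[b]\setminus\{\sigma_1,\tau_1,\ldots,\sigma_{j-1},\tau_{j-1}\},\qquad K_{j-1}:=\delta(\mathcal{H}[b])\setminus\{\sigma_1,\tau_1,\ldots,\sigma_{j-1},\tau_{j-1}\},
\end{equation*}
with $K_{j-1}\subseteq L_{j-1}$. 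In $L_{j-1}$ the pair $(\sigma_j,\tau_j)$ is a valid elementary collapse, so $\tau_j$ is a maximal simplex of $L_{j-1}$ and is the unique proper coface of $\sigma_j$ there. Passing to the subcomplex $K_{j-1}$ can only decrease the set of proper cofaces of $\sigma_j$, while hypothesis~(ii) together with the induction ensures $\tau_j\in K_{j-1}$; hence $\tau_j$ remains the unique proper coface of $\sigma_j$ in $K_{j-1}$ and is maximal in $K_{j-1}$, giving a valid elementary collapse. The induction then delivers $\delta(\mathcal{H}[b])\searrow\delta(\mathcal{H}[a])$ by the same sequence of pairs, so condition~(a) of Lemma~\ref{le-8.1} is fulfilled and the corollary follows.

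The main obstacle is the inductive bookkeeping in the third paragraph, namely making sure that each pair $(\sigma_j,\tau_j)$ supplied by Forman's simplicial collapse of $\Delta\mathcal{H}[b]$ is still admissible when executed inside the smaller simplicial complex $\delta(\mathcal{H}[b])$: one needs $\tau_j$ to persist in the $\delta$-trajectory and no extra coface of $\sigma_j$ to obstruct freeness. Both points are handled by hypothesis~(ii), which forces the collapse pairs to stay inside the common set-difference and prevents any simplex in $\Delta\mathcal{H}[b]\setminus\delta(\mathcal{H}[b])$ from appearing among the $\sigma_j,\tau_j$.
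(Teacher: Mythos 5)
Your proposal is correct and takes essentially the same route as the paper: apply \cite[Theorem~3.3]{forman1} to $\overline f$ to obtain $\Delta\mathcal{H}[b]\searrow\Delta\mathcal{H}[a]$, use hypothesis (ii) (with the identification $\Delta(\mathcal{H}[c])=\Delta\mathcal{H}[c]$) to run the same collapse sequence on $\delta(\mathcal{H}[b])\searrow\delta(\mathcal{H}[a])$, and conclude via Lemma~\ref{le-8.1}. The only difference is that you make explicit the inductive check that each free pair $(\sigma_j,\tau_j)$ remains admissible inside the subcomplex $\delta(\mathcal{H}[b])$, a point the paper's proof leaves implicit.
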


\begin{proof}
By (i) and \cite[Theorem~3.3]{forman1},  $\Delta\mathcal{H}[b]\searrow \Delta\mathcal{H}[a]$ by a sequence of single elementary collapses. We notice that $\delta(\mathcal{H}[b])$ (resp. $\delta(\mathcal{H}[a])$) is a simplicial subcomplex of $\Delta\mathcal{H}[b]$ (resp. $\Delta\mathcal{H}[a]$).  Hence by (ii),  $\delta(\mathcal{H}[b])\searrow \delta(\mathcal{H}[a])$ by the same  sequence of single elementary collapses.  Therefore, by Lemma~\ref{le-8.1},     $\mathcal{H}[b]\searrow\mathcal{H}[a]$. 
\end{proof}

\medskip

\noindent {\bf Acknowledgement}.  The authors would like to thank   Prof. Yong Lin  for his supports,  discussions and encouragements. The authors would like to express their deep gratitude to the reviewer(s) for their careful reading, valuable comments, and helpful suggestions. 

\bigskip

 Shiquan Ren  
 
 Address:  
Yau  Mathematical Sciences Center, Tsinghua University,  China 100084.  

  e-mail:  
  srenmath@126.com
  
  \medskip
  
  Chong Wang
  
  Address: School of Information, Renmin University of China, China 100872. 
  
  e-mail:  wangchong\_618@163.com 
  
  \medskip

Chengyuan Wu
  
Address: Department of Mathematics, National University of Singapore, Singapore 119076. 
  
  e-mail: wuchengyuan@u.nus.edu
  
  \medskip
  
Jie Wu
  
Address: School of Mathematics and Information Science,  Hebei Normal University, China 050024.   
  
  e-mail: wujie@hebtu.edu.cn



 

\begin{thebibliography}{99}

 

  






\bibitem{ayala1}
R. Ayala, L.M. Fern\'{a}ndez, D. Fern\'{a}ndez-Ternero and J.A. Vilches, \emph{Discrete Morse theory on graphs}.  Topol. Appl. {\bf 156} (2009),  3091-3100. 

\bibitem{ayala2}
R. Ayala, L.M. Fern\'{a}ndez, A. Quintero and J.A. Vilches, \emph{A note on the pure Morse complex of a graph}. Topol.  Appl. {\bf 155}   (2008), 2084-2089. 


\bibitem{berge}
 C. Berge,  \emph{Graphs and hypergraphs}. North-Holland Mathematical Library, Amsterdam, 1973.  

\bibitem{h1}
S. Bressan, J. Li, S. Ren and J. Wu, \emph{The embedded homology of hypergraphs and applications}. Asian J. Math. {\bf 23} (3)  (2019), 479-500. 





 





 





 

 

 










\bibitem{forman1}
R. Forman, \emph{Morse theory for cell complexes}. Adv. Math. {\bf 134} (1)  (1998), 90-145. 




\bibitem{forman2}
R. Forman, \emph{A user's guide to discrete Morse theory}. S\'{e}minaire Lotharingien de Combinatoire  {\bf 48} Article B48c, 2002. 

\bibitem{forman3}
R. Forman,  \emph{Discrete Morse theory and the cohomology ring}. 
 Trans.       Amer.  Math.  Soc.  {\bf 354}  (12)
  (2002),  5063-5085.  














 

\bibitem{hatcher}
A. Hatcher, \emph{Algebraic topology}. Cambridge University Press, Cambridge, 2001. 


\bibitem{mams}
M. J\"ollenbeck and V. Welker, \emph{Minimal resolutions via algebraic discrete Morse theory}. Memoirs of the American Mathematical Society {\bf 923},  2009. 

\bibitem{alg2}
D. N. Kozlov,  \emph{Discrete Morse theory for free chain complexes}. Comptes Rendus Mathematique,  C. R. Acad. Sci. Paris, Ser. I {\bf 340} (12) (2005), 867-872.





  \bibitem{parks}
  A.D. Parks  and S.L. Lipscomb, \emph{Homology and hypergraph acyclicity: a combinatorial invariant for hypergraphs}.   Naval Surface Warfare Center, 1991.  
   











 



  
 
 


 






  




 

  
 
 

 
\bibitem{algm}
E. Sk\"oldberg,  \emph{Morse theory from an algebraic viewpoint}. Trans. Amer. 
Math. Soc.  {\bf 358} (1)
 (2006),  115-129.

\bibitem{cy}
C. Wu,  S. Ren, J. Wu, and K. Xia, \emph{Discrete Morse theory for weighted simplicial complexes}. arXiv preprint (2019).   arXiv: 1901.01716.


\bibitem{wu1}
J. Wu, \emph{Simplicial objects and homotopy groups}, in \emph{Braids. Introductory Lectures on Braids, Configurations and their Applications}. World Scientific, Hackensack, 2010,   31-181. 

 

 
 
 


 
 
 
 

 
 
 
\end{thebibliography}
 \end{document}